\tikzstyle{nodo}=[circle,draw,fill,inner sep=0pt,minimum size=%
\tikzstyle{infinito}=[circle,inner sep=0pt,minimum size=0mm]
\newtheorem{theorem}{Theorem}
\newtheorem{lemma}{Lemma}[section]
\newtheorem{proposition}[lemma]{Proposition}
\newtheorem{corollary}[lemma]{Corollary}
\newtheorem*{problem}{Problem}
\theoremstyle{remark}
\newtheorem{remark}[lemma]{Remark}
\newtheorem*{remark*}{Remark}
\theoremstyle{definition}
\newtheorem{definition}[lemma]{Definition}
\newcommand{\R}{\mathbb{R}}
\newcommand{\Rd}{\mathbb{R}^2}
\newcommand{\Rp}{\mathbb{R}^+}
\newcommand{\C}{\mathbb{C}}
\newcommand{\Z}{\mathbb{Z}}
\newcommand{\D}{\mathcal{D}}
\newcommand{\V}{\mathcal{V}}
\newcommand{\Eps}{\mathcal{E}}
\newcommand{\G}{\mathcal{G}}
\newcommand{\I}{\mathcal{I}}
\renewcommand{\Re}{\mathrm{Re}}
\renewcommand{\leq}{\leqslant}
\renewcommand{\geq}{\geqslant}
\newcommand{\la}{\lambda}
\newcommand{\x}{\mathbf{x}}
\newcommand{\z}{\mathbf{0}}
\newcommand{\lap}{\Delta}
\newcommand{\na}{\nabla}
\newcommand{\f}[2]{\frac{#1}{#2}}
\newcommand{\scal}[2]{\langle #1,#2\rangle}
\newcommand{\deb}{\rightharpoonup}
\title[NLS ground states on a hybrid plane]{NLS ground states on a hybrid plane}
\author[R. Adami]{Riccardo Adami}
\address{R. Adami: Politecnico di Torino, Dipartimento di Scienze Matematiche ``G.L. Lagrange'', Corso Duca degli Abruzzi, 24, 10129, Torino, Italy.}
\email{riccardo.adami@polito.it}
\author[F. Boni]{Filippo Boni}
\address{F. Boni: Politecnico di Torino, Dipartimento di Scienze Matematiche ``G.L. Lagrange'', Corso Duca degli Abruzzi, 24, 10129, Torino, Italy.}
\email{filippo.boni@polito.it}
\author[R. Carlone]{Raffaele Carlone}
\address{R. Carlone: Universit\`a degli Studi di Napoli Federico II, Dipartimento di Matematica e Applicazioni ``Renato Caccioppoli'', Via Cintia, Monte S. Angelo, 80126, Napoli, Italy.}
\email{raffaele.carlone@unina.it}
\author[L. Tentarelli]{Lorenzo Tentarelli}
\address{L. Tentarelli: Politecnico di Torino, Dipartimento di Scienze Matematiche ``G.L. Lagrange'', Corso Duca degli Abruzzi, 24, 10129, Torino, Italy.}
\email{lorenzo.tentarelli@polito.it}
\date{\today}
\begin{document}


\begin{abstract}
We study existence, nonexistence, and qualitative properties of ground states for a focusing, subcritical Nonlinear Schrödinger Equation on a hybrid plane, consisting of a half-line attached to a plane. Ground states are normalized minimizers of the associated energy, given by Nonlinear Schrödinger energies with contact interactions on the half-line and on the plane, plus a quadratic coupling term. At fixed mass, existence holds if the contact interaction on the half-line is not too repulsive, or the interaction on the plane is sufficiently attractive, or the coupling is strong enough. Nonexistence occurs when both interactions are sufficiently repulsive and the coupling is weak. Moreover, we discuss how the coupling affects the support and the symmetry properties of such ground states. These are the first results for a Nonlinear Schrödinger Equation on a mixed-dimensional manifold.
\end{abstract}

\maketitle

\vspace{-.5cm}
\noindent {\footnotesize \textul{AMS Subject Classification:} 35R02, 81Q35, 35Q55, 35Q40, 35B07, 35B09, 35R99}

\noindent {\footnotesize \textul{Keywords:} hybrids, standing waves, nonlinear Schr\"odinger, ground states, delta interaction, radially symmetric solutions, rearrangements.}


\section{Introduction}

The quantum dynamics on domains of mixed dimensionality was first investigated by Exner and  $\check{\text{S}}$eba in the seminal work
\cite{ES-87}, where the authors considered a manifold  made of a plane glued to the end of a half-line as in Figure \ref{fig-hyb}. The origin of both the half-line and the plane was set at the junction. The resulting structure, here denoted by $\mathcal I$, was called \emph{hybrid plane}. According to the physical theory, a state of a particle confined on $\I$ consists of a couple of functions   
\begin{equation}
U = (u,v), \qquad u \in L^2 (\R^+), \, v \in L^2 (\R^2).
\label{states}
\end{equation}

In the same work, the authors described all matching conditions that link $u$ and  $v$ at the junction
and realize the Laplacian as a self-adjoint operator in $L^2 (\I) := L^2 (\R^+) \oplus L^2 (\R^2)$.
This problem and the  techniques used in \cite{ES-87} belong to the field of the analysis of linear operators.

In the present paper we initiate the analysis of nonlinear problems on hybrid structures. Here we focus on the search for conditions of existence of ground states for the standard focusing Nonlinear Schr\"odinger (NLS) Energy on $\mathcal I$ in the $L^2$-subcritical case. 

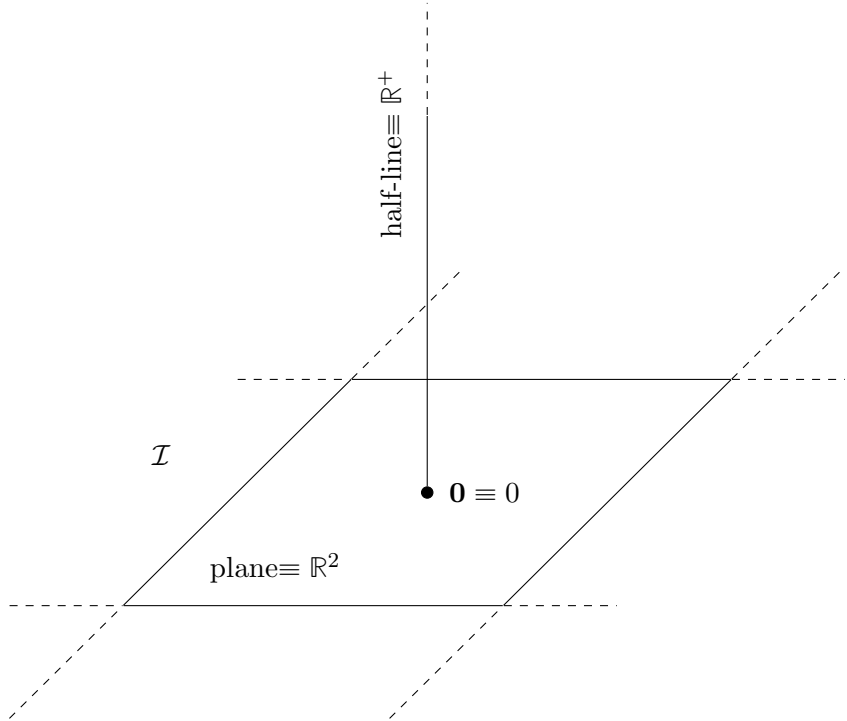
\begin{figure}
\centering
\begin{tikzpicture}[xscale= 0.5,yscale=0.5]
\node at (-3,-3) [infinito] (-3-3) {};
\node at (7,-3) [infinito] (7-3) {};
\node at (-3,0) [infinito] (-30) {};
\node at (0,0) [infinito] (00) {};
\node at (10,0) [infinito] (100) {};
\node at (13,0) [infinito] (130) {};
\node at (8,3) [nodo] (83) {};
\node at (3,6) [infinito] (36) {};
\node at (6,6) [infinito] (66) {};
\node at (16,6) [infinito] (166) {};
\node at (19,6) [infinito] (196) {};
\node at (9,9) [infinito] (99) {};
\node at (19,9) [infinito] (199) {};
\node at (8,13) [infinito] (813) {};
\node at (8,16) [infinito] (816) {};
\node at (1,4) [infinito] (14) {$\I$};
\node at (4,1) [infinito] (41) {plane$\equiv\Rd$};
\node at (9.5,3) [infinito] (8525) {$\z\equiv0$};
\node at (7,12) [infinito] (712) {\begin{turn}{90} 
half-line$\equiv\Rp$
\end{turn}};
\draw [-] (00) -- (100);
\draw [-] (00) -- (66);
\draw [-] (100) -- (166);
\draw [-] (66) -- (166);
\draw [-] (83) -- (813);
\draw [dashed] (-3-3) -- (00);
\draw [dashed] (7-3) -- (100);
\draw [dashed] (-30) -- (00);
\draw [dashed] (100) -- (130);
\draw [dashed] (36) -- (66);
\draw [dashed] (166) -- (196);
\draw [dashed] (66) -- (99);
\draw [dashed] (166) -- (199);
\draw [dashed] (813) -- (816);
\end{tikzpicture}
\caption{ The hybrid plane $\I$. The origin of the coordinates on both the half-line and the plane is set at the junction.}
\label{fig-hyb}
\end{figure}
We construct such energy 
heuristically, however we will 
show in Appendix \ref{sec:operatorES} that it coincides with the sum of the energy associated to the Laplacian 
defined in \cite{ES-87} with a nonlinear power term.

To construct the NLS energy on $\I$, we start from the well-known expression of the focusing standard NLS energy (\cite{GV-79,GV-79bis,C-03})
\begin{equation} \label{standard}
E_{NLS} (w,X)  =  \frac 1 2 \| \nabla w \|^2_{L^2(X)}
- \f 1 s \| w \|^s_{L^s(X)}, \qquad w \in H^1 (X), 
\end{equation}
where $X$ can be
the half-line or the plane; then, we assume that the effect of the contact between the plane and the half-line is concentrated at the junction, giving rise to one contact interaction on the half-line, to another on the plane, and to a coupling between the two components. As a result, the contribution of the half-line to the energy is
\begin{equation} \label{e1}
E_\alpha(u,\R^+) = \frac 1 2 \| u' \|_{L^2 (\R^+)}^2 - \frac 1 p \| u \|^p_{L^p (\R^+)} +
\frac \alpha 2 | u (0) |^2, \qquad u \in H^1 (\R^+),
\end{equation}
where the real number $\alpha$ denotes the strength of the contact interaction on the half-line, which coincides with a Dirac's delta
potential at the origin. For this reason, in what follows we will reserve the name delta interaction to the contact interaction on the half-line.

Concerning the contribution of the planar component $v$ to the energy on $\I$, 
it is well-known that a Dirac's delta term like in \eqref{e1}, say $\rho | v (\bf 0) |^2$, is not controlled by the gradient term in \eqref{standard} and then it does not give rise to a closed and lower bounded form, making the energy ill-defined (for details see \cite{AGHKH-88}). Thus, in order to define a contact interaction in two dimensions, one can make use of the theory of self-adjoint extensions of hermitian operators by von Neumann and Krein \cite{RS-79}, and, as a result, one finds that
the presence of a contact interaction imposes a particular structure to the function $v$. Precisely, $v$ decomposes into the sum of a  regular and a singular part, namely
\begin{equation} \label{decv}
v  =  \phi + q {\frac{K_0}{2 \pi}}, \, \qquad \phi \in H^1 (\R^2), \, q \in \C, 
\end{equation}
where the complex number $q$ is called the charge of the function $v$ and $K_0$ is the Macdonald function of order zero, whose asymptotic behaviour at the origin is
$$ K_0 (\x) =  - \log |\x| + o (\log |\x| ), \qquad \x \to 0.$$
Notice that $K_0$ is not in $H^1 (\R^2)$, so the decomposition \eqref{decv} is consistent. 

The energy of the function $v$ in \eqref{decv} reads (\cite{ABCT-22,AGHKH-88})
\begin{equation} 
\label{e2}
E_\rho(v,\R^2) =  \frac 1 2 \| \phi \|_{H^1 (\R^2)}^2 
- \frac 12 \|v \|_{L^2 (\R^2)}^2 
+
\left( \rho + \frac{\gamma - \log 2}{2 \pi} \right)
\frac{|q|^2}{2} - \frac 1 r \| v \|^r_{L^r (\R^2)},
\end{equation}
where $\gamma$ is the Euler-Mascheroni constant and the real number $\rho$ represents the strength of the contact interaction. More precisely, $-\frac 1 {2 \pi \rho}$ is the scattering length of the contact interaction, so that the free Laplacian is recovered by setting $\rho = \infty$, that formally forces $q = 0$ in \eqref{e2} and therefore $v = \phi$.

Given the contribution  of the half-line in \eqref{e1} and that of the plane in \eqref{e2}, to complete the definition of the energy one needs to introduce the coupling between the two components.
We choose such a coupling
as quadratic in the main orders of $u$ and $v$ near the junction, namely $u (0)$ and $q$. The coupling term equals then $- \beta \Re (q \overline{u(0)})$, with $\beta \geq 0$. The restriction on $\beta$ does not affect the generality, since for a complex $\beta$ it is sufficient to multiply the function $u$ by the phase of $\beta$ in order to recover an expression of the energy with a nonnegative $\beta$. 

The resulting expression for the energy on the hybrid is then
\begin{equation}
    \label{ei}
    E (U)  : =  E_\alpha (u, \Rp) + E_\rho (v, \R^2) - \beta \Re (q 
    \overline{u(0)}), \qquad U = (u,v). 
\end{equation}
Let us remark that the energy of the half-line $E_\alpha (\cdot, \R^+)$ is the restriction of $E$ to states with $v = 0$ and the energy on the plane $E_\rho$ is the restriction of $E$ to states with $u = 0$.

We are now ready to introduce the notion of ground state.
By ground state for the energy \eqref{ei} at mass $\mu$ we mean a function $U$ in the energy domain
\begin{equation} \label{energydomain}
\mathcal D: = \left\{ U = (u, v): \, u \in H^1 (\R^+), \, v = \phi + q K_0/ 2\pi, \, \phi \in H^1 (\R^2), \, q \in \C\right\},
 \end{equation}
that minimizes $E$ among the functions satisfying the mass constraint 
\begin{equation}
\label{mass}
\| u \|_{L^2 (\R^+)}^2 + \| v \|_{L^2 (\R^2)}^2  =  \mu.
\end{equation}
We use the notation $\mathcal D_\mu$ for the intersection of the energy domain $\mathcal D$ with the mass constraint.

So, in order for a ground state to exist, the infimum 
\begin{equation} \label{infimum}
\mathcal E (\mu)  : =  \inf_{\mathcal D_\mu} E (U) 
\end{equation}
must be finite, i.e. $\mathcal E (\mu) > - \infty$, and attained, i.e. there must be $U \in \mathcal D_\mu$ such that
$$ E (U) = \mathcal E (\mu). $$

Finally, as already anticipated, here we restrict to $L^2$-subcritical nonlinearities, i.e. we consider powers bounded as $2 < p < 6$ and $2 < r < 4$. The meaning of this limitation is illustrated by the Gagliardo-Nirenberg inequality
\begin{equation}
\label{eq-gigen}
\| w \|_{L^s (\R^n)}^s  \leq  C \| \nabla w \|_{L^2 (\R^n)}^{n \left( \frac s 2 - 1 \right)} \| w \|_{L^2 (\R^n)}^{n + s \left( 1 - \frac n 2 \right)},
\end{equation}
that, provided $s < 2 + 4/n$, yields 
$$
E_{NLS} (w,\R^n)  \geq  \frac 1 2 \| \nabla w \|^2_{L^2(\R^n)} - C \| w \|_{L^2 (\R^n)}^{n + s \left( 1 - \frac n 2 \right)} \| \nabla w \|_{L^2(\R^n)}^{n \left( \frac s 2 - 1 \right)},
$$
so that the constrained standard energy is coercive. In the proof of Theorem 1 we show that  \eqref{eq-gigen} yields coercivity for the energy \eqref{ei} also, under the constraint \eqref{mass} and the subcriticality conditions $2 < p < 6$ and $2 < r < 4$.

Of course coercivity guarantees that the infimum of the functional is finite, but in general does not ensure that it is attained. In fact, while in $\R^n$ subcriticality entails existence of ground states for every mass, this is not true in more complicated domains, like for instance  metric star graphs with at least three half-lines (\cite{ACFN-12,AST-15}).  

In the construction of the energy \eqref{ei} we introduced two different nonlinearity powers, $p$ in the half-line and $r$ in the plane, to enclose a larger spectrum of problems and to cover all subcritical cases.

Summing up, the problem we analyze is the following:
\begin{problem}
Given $\alpha, \rho \in \R$, $\beta \geq 0$, $p\in(2,6)$, $r\in(2,4)$ and  $\mu > 0$,
establish the existence or the nonexistence and, in case of existence, some qualitative features for the ground states at mass $\mu$ of the energy \eqref{ei}, namely the
minimizers of the functional
\begin{equation}
    \label{e} \begin{split}
E (U)  =  & \,E_\alpha (u, \R^+) + E_\rho (v, \R^2) -  \beta \Re (q \overline{u(0)}) \\
 =  &
 \,\frac 1 2 \| u' \|_{L^2 (\R^+)}^2 + \frac \alpha 2 | u (0) |^2 - \frac 1 p \| u \|^p_{L^p (\R^+)}
+ \frac 1 2 \| \phi \|_{H^1 (\R^2)}^2 
- \frac 12 \|v \|_{L^2 (\R^2)}^2  \\ &
+
\left( \rho + \frac{\gamma - \log 2}{2 \pi} \right)
\frac{|q|^2}{2} - \frac 1 r \| v \|^r_{L^r (\R^2)}
-  \beta \Re (q \overline{u(0)}),
\end{split}
\end{equation}
where:
\begin{itemize}
\item $u \in H^1 (\R^+)$;
\item $v = \phi + q {K_0} /{2 \pi},$ with $\phi \in H^1 (\R^2),\, q \in \C$;
\item $U = (u,v)$ satisfies the mass constraint \eqref{mass}.
\end{itemize}
\end{problem}


Let us quickly summarize our results, to be detailed in Section \ref{sec:results}. According to the formulation of the Problem, some of them focus on the issue of existence or nonexistence of ground states, while the others concern the shape of such ground states, provided they exist.  

The key step for proving existence of ground states lies in showing that the only possible lack of compactness for a minimizing sequences is given by the escape at infinity through the half-line. In other words, the escape at infinity along the plane is not an issue as it is never energetically convenient. This is due to the fact that the plane alone with a contact interaction always traps a ground state \cite{ABCT-22}.
Therefore, in order for a ground state to exist, the energy threshold to be overcome is the energy of the one-dimensional soliton at mass $\mu$ (Theorem \ref{thm:1}).
Conversely, if the energy of every state is strictly larger than the energy of the one-dimensional soliton with the same mass, then ground states at that mass cannot exist.

Concrete results of existence derive from Theorem \ref{thm:1}. First, in Corollaries \ref{smallmu} and \ref{largemu} we obtain that, for every choice of the parameters $\alpha \in \R,\, \rho \in \R,\, \beta \geq 0,\,p\in(2,6),\,r\in(2,4)$, a ground state exists for small and for large mass. This result is obtained through  asymptotic analysis and does not provide significant estimates of the mass thresholds. These are given in Corollary \ref{interplay}, where we provide explicit thresholds $\mu_{p,r}$ for the mass and $r^\star$ for the nonlinearity, so that existence is guaranteed whenever $\mu$ and $r$ are either both above or both below the respective thresholds.
On the other hand, for fixed mass there exists a ground state as one of the following hypotheses is satisfied:
\begin{enumerate}
\item 
if $\alpha$ is small enough, depending only on the power $p$ (Corollary \ref{smallalfa}). Since the threshold in $\alpha$ is positive for every $p$, this means that a not too repulsive delta potential at the origin of the half-line guarantees, alone, the presence of a ground state;
\item if $\rho$ is not too large, with a threshold that depends on all other parameters (Corollary \ref{limits}). Recalling that in the linear Schr\"odinger Equation the two-dimensional contact interaction 
always has a ground state whose energy increases with $\rho$
\cite{AGHKH-88}, this condition means that,
even when the delta interaction at the half-line is strongly repulsive, a sufficiently binding contact interaction on the plane captures a ground state for the hybrid;
\item if $\beta$ is large enough, with a threshold that depends on all other parameters (Corollary \ref{limits}). This means that, even if all other parameters play an unfavorable role, a strong coupling between the halfline and the plane can trap a ground state. In fact, the coupling always has a binding effect.
\end{enumerate}

Conversely, if we are not in the hypotheses of the previous corollaries that guarantee existence, namely if $\alpha$ and $\rho$ are large and $\beta$ is small, then there are no ground states (Theorem \ref{thm:2}).

 We point out that our classification does not cover the case $r = \frac{6p-4}{p+2}$, since this condition results in the same scaling law for the energies of the ground states on the free line and the free plane as functions of the mass. So, in order to figure out which component of $\I$ is energetically more convenient, one should use the numerical value of the energy of the soliton in the half-line and in the plane, but the latter is currently unknown. Furthermore, as we specify more precisely in Section \ref{sec:results}, our results do not cover the whole spectrum of the parameters.

The second kind of results concerns some basic features of the ground states. We distinguish the case of decoupled half-line and plane, i.e. $\beta = 0$, and the case with a coupling $\beta > 0$.
In the first case, every ground state is supported either on the half-line, where it is a  soliton tail, or on the plane, 
where it coincides with 
a ground state for the NLS with a point interaction of strength $\rho$ (Theorem \ref{thm:3}). If the plane and the half-line are coupled, then every ground state possesses non-trivial components on the plane and on the half-line, each being a ground state at some mass for the NLS with a suitable point interaction at the origin (Theorem \ref{thm:4}). In particular, every ground state displays a logarithmic singularity at the origin of the plane and its two components are connected at the junction by the
boundary conditions \eqref{bc} singled out in \cite{ES-87}. 

As for the techniques, our proofs rely on the interplay between
direct and indirect methods of the Calculus of Variations, together with techniques developed for studying contact interactions in Quantum Mechanics.
Our starting point is the information obtained separately for the
problem on the half-line \cite{BC-23} and the plane \cite{ABCT-22,FGI-22}. However, most of the proofs here require more advanced techniques. This is mainly because when studying the existence of ground states on the hybrid, the problems on the two components are separately unconstrained, being the constraint only on the hybrid as a whole. In particular, the proof of Theorem \ref{thm:1} is more technical than the analogous proof in \cite{ABCT-22}, even though we managed to simplify the use of the  Gagliardo-Nirenberg inequalities, adequately generalized. Moreover, to prove the features of ground states in the coupled case (Theorem \ref{thm:4}), we follow the strategy introduced in \cite{ABCT-22}, i.e. we move to a problem that rephrases the minimization of the action on the Nehari manifold. In such a problem, applying a suitable version of the rearrangement inequalities proves meaningful, while in the original problem this is much harder. However, as Remarks \ref{rem-problemi} and \ref{rem-problemi2} explain, in this context such strategy presents a major technical obstacle, which requires further non-trivial efforts to be solved. This is a consequence of choosing different nonlinearity powers on the half-line and the plane.

\smallskip

Our model opens a new research line in a topic that in recent years has prompted a widespread and still growing interest,  the Nonlinear Schr\"odinger (NLS) equation on non-standard domains.
In such topic a prominent role has been played by the study of the NLS on {metric graphs} (see e.g.  \cite{BK-13} for a rigorous definition). The reason is twofold. On the one hand, such a model proved to be an effective description of the dynamics of {Bose-Einstein Condensates} (BEC) in branched traps  \cite{GW-16,LLMOSCMWCCT-14}; which is, among other things, the basic framework for the study of emerging quantum technologies such as {atomtronics} \cite{ABBMMKAAAAAB-21}. On the other hand, from a mathematical perspective, it gives rise to several new and unexpected phenomenologies for the same problem on Euclidean domains.

Consequently, the mathematical literature on the topic has become large in a relatively short time, especially concerning the study of the standing waves for the {focusing} NLS. For the sake of brevity, we only mention a selection of the most recent results on compact graphs \cite{CJS-23,DGMP-20}, graphs with half-lines \cite{ABD-20,BCJS-23,BD-21,BD-22,DDGS-23,DDGS-23bis,DST-20,NP-20,PS-22,PSV-21} and graphs with infinitely many edges \cite{ADST-19,D-23,DST-20bis,DT-22}, even with relativistic corrections \cite{BCT-19,BCT-21,BCT-21bis}.

In this perspective, the study of the NLS on the hybrid plane $\I$ and, more generally, on connected manifolds consisting of pieces of different dimension, also known as {quantum hybrids}, appears as a natural generalization of the analogous investigations carried out for quantum graphs. However, the complexity of the techniques grows considerably.

In this regard, it is worth mentioning that the discussion on the NLS ground states on hybrids shares more features with the study of the ground states for the NLS with {point defects} of delta type, nowadays almost completely understood for the problem on the real line  \cite{AN-13,ANV-13,BD-21,FJ-08,FOO-08,LFFKS-08} and only recently addressed in the cases of the half-line \cite{BC-23} and of the plane and the space \cite{ABCT-22,ABCT-22bis,CFN-21,CFN-22,FGI-22,FN-22}.

Due to the structure of the hybrid plane $\I$, we take advantage of the results obtained in \cite{ABCT-22,BC-23}, which represent  preliminary investigations aimed at the present work. 

\medskip

Finally, the paper is organized as follows:
\begin{itemize}[label=$\ast$]
 \item in Section \ref{sec:results} we state and comment the results and give some corollaries;
 \item in Section 3 we provide the proof of the main result (Theorem \ref{thm:1}), that gives necessary and sufficient conditions for the existence of ground states at mass $\mu$.
 \item in Section \ref{sec-exnon} we present the proof of a nonexistence result (Theorem \ref{thm:2}). 
 \item Section \ref{sec-properties} addresses the proof of Theorem \ref{thm:4}, concerning the properties of ground states when $\beta>0$.
\end{itemize}
The paper ends by two appendices. The first shows that  the self-adjoint operator $H$ associated with the quadratic part of \eqref{e} is, apart from a factor $2$, that constructed in \cite{ES-87}, and the second details the spectrum of $H$ in view of the importance of the linear ground state for the non linear problem.


\section{Main results} 
\label{sec:results}

Before stating the results, we point out that there exist three negative upper bounds for $\mathcal E(\mu)$.

\begin{remark} [A priori upper bounds]
\label{rem:apriori}
 The function $\mathcal E (\mu)$ is always negative. More precisely, for every $\mu$ it lies below the following three levels.

\begin{enumerate}
\item
Denote by $\varphi_\mu$ the unique positive and even ground state at mass $\mu$ of the functional \eqref{standard} with $X=\R$ and $s=p$, whose existence is a classical result (\cite{ZS-71,L-82,L-84,L-84bis,C-03}). Furthermore,
\begin{equation} \label{standard1}
E_{NLS} (\varphi_\mu, \R)  =  - \theta_p \mu^{\frac{p+2}{6-p}}, \qquad \theta_p:=-E_{NLS} (\varphi_1, \R) > 0.
\end{equation}
Now consider truncations of $\varphi_\mu$ on the half-line
$\varphi_{\mu}^{(n)} = N^{(n)} \chi_{R^+} \varphi_\mu (x-n)$, where the factor $N^{(n)}$ normalizes the mass to $\mu$. Clearly,  
$\varphi_{\mu}^{(n)} (0 ) = \varphi_\mu (n)$ vanishes and $N^{(n)}\to1$, as $n \to +\infty$, so
$$ E_{\alpha} (\varphi_{\mu}^{(n)}, \R^+)  =  E_{NLS} (\varphi_\mu, \R) + o (1)  \to  - \theta_p \mu^{\frac {p+2}{6-p}}, \qquad n \to +\infty.$$
Defining $\Phi_{\mu}^{(n)} = (\varphi_\mu^{(n)}, 0)$ in $\D_\mu$, one gets
\begin{equation} \label{bound1}
 \mathcal E (\mu)  \leq  \lim_n E (\Phi_\mu^{(n)})  =  
\lim_n E_{\alpha} (\varphi_{\mu}^{(n)}, \R^+)  =  - \theta_p \mu^{\frac {p+2}{6-p}}.
\end{equation}
Note that this bound may not be attained.

\item 
Analogously, denote by $\xi_\mu$ the unique positive and spherically symmetric ground state at mass $\mu$ of the functional \eqref{standard} with $X=\R^2$ and $s=r$, whose existence is a classical result too (\cite{L-82,L-84,L-84bis,C-03}). It satisfies
\begin{equation} \label{standard2}
E_{NLS} (\xi_\mu,\R^2)  =  - \tau_r \mu^{\frac 2 {4-r}},
\qquad \tau_r:=-E_{NLS} (\xi_1, \R^2) > 0.
\end{equation}
In addition, denote by $v_\mu$ a positive and spherically symmetric ground state at mass $\mu$ of the functional $E_\rho (\cdot, \R^2)$. In \cite{ABCT-22} we proved its existence and also that
\begin{equation} \label{comparison2}
E_\rho ( v_\mu, \R^2)  <  E_\rho (\xi_\mu, \R^2)
 =  E_{NLS} (\xi_\mu, \R^2),
\end{equation}
since in a decomposition like \eqref{decv} the function $\xi_\mu$ has no singular part. Then, introducing the state $\Upsilon_\mu = (0, v_\mu)$ we get
\begin{equation} \label{bound2}
\mathcal E (\mu)  \leq  E (\Upsilon_\mu) 
 =  E_\rho (v_\mu, \R^2)  <  E_\rho (\xi_\mu, \R^2)
 =  - \tau_r \mu^{\frac 2 {4 -r}}.
\end{equation}

\item
Finally, given $\mu > 0$ we denote by $\Psi_\mu$ the unique positive ground state at mass $\mu$ of twice
the quadratic part of the energy \eqref{e}, namely 
\begin{align} \label{quadratic}
Q (U)  = & \,  \| u' \|_{L^2(\R^+)}^2 + \alpha | u(0) |^2 +\nonumber\\[.2cm] & \, \| \phi \|_{H^1 (\R^2)}^2 
- \|v \|_{L^2 (\R^2)}^2 
+
\left( \rho + \frac{\gamma - \log 2}{2 \pi} \right)
|q|^2-  2\beta \Re (q \overline{u(0)}).
\end{align}
Clearly, $\Psi_\mu = \sqrt \mu \Psi_1$ and $Q (\Psi_\mu) = - E_{\rm{lin}}\mu < 0$, where we denoted by 
\begin{equation}
\label{Elin}
E_{\rm{lin}}:=-\inf_{\substack{U\in \D\\U\neq (0,0)}}\frac{Q(U)}{\|U\|_{L^2(\I)}^2}=-Q(\Psi_1).
\end{equation}
See Appendix \ref{sec:Elin} for a proof that $-E_{\rm{lin}}$ is actually the least eigenvalue of the self-adjoint operator associated with $Q$ and is negative.  As a consequence, one immediately sees that
\begin{equation}
\mathcal E (\mu)  \leq  E (\Psi_\mu)  <  
\frac12Q (\Psi_\mu)  = 
-\frac{E_{\rm{lin}}}{2} \mu.
\label{bound3}
\end{equation}
\end{enumerate}
We stress that, while bounds \eqref{bound1} and \eqref{bound2} do not depend on the parameters $\alpha, \rho$ and $\beta$, the factor $E_{\rm{lin}}$ does.
\end{remark}

An immediate consequence of \eqref{bound3} is that a minimizing sequence at mass $\mu$ for the functional $E$ cannot completely disperse and lose the whole mass in the weak limit, since the $L^p(\R^+)\oplus L^r(\R^2)$ norm would vanish and the limit of the energy could not be lower than $- \frac{E_{\rm{lin}}}{2} \mu$, contradicting \eqref{bound3}. Moreover, we prove in Theorem \ref{thm:1} that the possibility of splitting a fraction of the  mass is not energetically convenient. Therefore, by concentration-compactness method we conclude that either a minimizing sequence strongly converges to a ground state, or it escapes away to infinity.

In turn, the escape may take place in two ways: on the plane or along the half-line. In either case, the contribution of the contact interaction to the energy becomes negligible, so that, in order to minimize the energy, the sequence has to approximate asymptotically a soliton. However, if a minimizing sequence escaped through the plane, then its energy would converge to that of $\xi_{\mu}$, and
so  $\Xi_{\mu} = (0, \xi_\mu)$
would be a ground state at mass $\mu$ for \eqref{e}.

Hence,
the only situation in which ground states do not exist occurs when
minimizing sequences escape along the half-line and asymptotically mimic the one-dimensional soliton $\varphi_\mu$. Conversely, in order to ensure that a ground state exists, it is sufficient to exhibit a competitor whose energy is lower than the energy of the one-dimensional soliton with the same mass. This is summarized in the theorem below.

\begin{theorem}[A necessary and sufficient condition for existence]
\label{thm:1}
A ground state at mass $\mu$ for the energy \eqref{e} on the hybrid plane $\I$ exists if and only if there is a function $W \in \mathcal D_\mu$ such that 
\begin{equation} \label{excrit}
E (W)  \leq  E_{NLS} (\varphi_\mu, \R)  =  - \theta_p \mu^{\frac{p+2}{6-p}}
\end{equation}
where $\varphi_\mu$ is a ground state at mass $\mu$
for the standard NLS energy $E_{NLS}(\cdot, \R)$.
\end{theorem}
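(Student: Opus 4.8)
The plan is to establish the two implications separately, exploiting the concentration-compactness analysis sketched in Remark \ref{rem:apriori}.

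\medskip

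\textbf{The necessary condition.} Suppose a ground state $U = (u,v) \in \mathcal D_\mu$ exists. Then trivially $E(U) = \mathcal E(\mu)$, and by bound \eqref{bound1} we have $\mathcal E(\mu) \leq E_{NLS}(\varphi_\mu,\R) = -\theta_p \mu^{(p+2)/(6-p)}$. Hence $W = U$ satisfies \eqref{excrit}. This direction is immediate.

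\medskip

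\textbf{The sufficient condition.} This is where the work lies. Assume there exists $W \in \mathcal D_\mu$ with $E(W) \leq -\theta_p \mu^{(p+2)/(6-p)}$; we must produce a minimizer. First I would show coercivity: using the Gagliardo-Nirenberg inequality \eqref{eq-gigen} on $\R^+$ (with $s=p$, exploiting $p<6$), on $\R^2$ (with $s=r<4$), together with standard estimates controlling $|u(0)|^2$, $|q|^2$, and the cross term $\beta\Re(q\overline{u(0)})$ by small multiples of the $H^1$-norms plus lower-order mass terms, I would deduce that $E$ is bounded below on $\mathcal D_\mu$ and that every minimizing sequence $(U_n) = (u_n,v_n)$ is bounded in $H^1(\R^+) \oplus (H^1(\R^2) \oplus \C K_0)$ — more precisely $u_n$ bounded in $H^1(\R^+)$, $\phi_n$ bounded in $H^1(\R^2)$, and $q_n$ bounded in $\C$. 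Then I would pass to weak limits: $u_n \deb u$ in $H^1(\R^+)$, $\phi_n \deb \phi$ in $H^1(\R^2)$, $q_n \to q$ (up to subsequence, by finite-dimensionality), setting $v = \phi + qK_0/2\pi$ and $U = (u,v)$. Writing $m_1 = \|u\|_{L^2(\R^+)}^2$ and $m_2 = \|v\|_{L^2(\R^2)}^2$, the mass $m = m_1 + m_2 \leq \mu$, with the possibility of a mass defect $\mu - m \geq 0$ escaping to infinity. The heart of the argument is the \emph{strict subadditivity / no-dichotomy} estimate: I would show that if $0 < m < \mu$, then $\mathcal E(\mu) < \mathcal E^{(1)}(m) + \mathcal E^{\text{esc}}(\mu - m)$, where the escaping mass $\mu-m$ contributes at least the energy of a one-dimensional soliton of that mass on the half-line or plane (whichever is more favorable at infinity, i.e. where the contact interaction becomes negligible), namely at least $\min\{-\theta_p(\mu-m)^{(p+2)/(6-p)}, -\tau_r(\mu-m)^{2/(4-r)}\}$. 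Crucially, since the plane \emph{alone} with the contact interaction $\rho$ traps a genuine ground state $v_{\mu-m}$ strictly below the free-plane soliton (this is \eqref{comparison2}, from \cite{ABCT-22}), escape through the plane can always be improved by bringing that mass back to the origin; so the only genuine obstruction is escape along the half-line with threshold $-\theta_p\mu^{(p+2)/(6-p)}$. Combining this with the hypothesis $\mathcal E(\mu) \leq E(W) \leq -\theta_p\mu^{(p+2)/(6-p)}$ rules out both vanishing (excluded already by \eqref{bound3}) and dichotomy, forcing $m = \mu$, i.e. strong $L^2$-convergence; weak lower semicontinuity of the $H^1$-terms and the quadratic contact terms, together with strong $L^p(\R^+)$ and $L^r(\R^2)$ convergence from the compact Sobolev embeddings (using that the escaping part has vanished), then give $E(U) \leq \liminf E(U_n) = \mathcal E(\mu)$, so $U$ is a ground state.

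\medskip

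\textbf{Main obstacle.} The delicate point is the handling of dichotomy and of escape through the plane. Unlike the pure-plane problem in \cite{ABCT-22}, here the constraint \eqref{mass} couples the two components but each is separately unconstrained, so when a fraction of mass splits off one must carefully attribute it to the correct limiting profile — a soliton on whichever component is energetically cheaper for that mass — and this requires the (generalized) Gagliardo-Nirenberg machinery on both $\R^+$ and $\R^2$ to be combined into a single scaling comparison. In particular one must be attentive to the two different powers $p$ and $r$: the functions $t \mapsto -\theta_p t^{(p+2)/(6-p)}$ and $t \mapsto -\tau_r t^{2/(4-r)}$ have different homogeneities, so the comparison of which escape channel is favorable depends on $m$, and the strict subadditivity must be verified in each regime. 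I expect that establishing $\mathcal E(\mu) < \mathcal E(m) + (\text{escape cost of } \mu - m)$ for all $0 < m < \mu$ — with the escape cost correctly identified via \eqref{bound1}, \eqref{bound2} and the strict inequality \eqref{comparison2} that kills the planar channel — is the technical core, more involved than its analogue in \cite{ABCT-22} precisely because of the mixed dimensionality and the two nonlinearity exponents.
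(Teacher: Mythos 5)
Your overall strategy -- concentration--compactness for a minimizing sequence, with \eqref{comparison2} killing the planar escape channel, the hypothesis \eqref{excrit} killing the half-line channel, and a Brezis--Lieb/subadditivity argument killing dichotomy -- is exactly the paper's route, and the necessary direction is the same one-line argument. There is, however, a genuine gap at the start of the sufficiency part. The hypothesis is the \emph{non-strict} inequality $E(W)\leq-\theta_p\mu^{\frac{p+2}{6-p}}$, while every exclusion step in your scheme (no escape along the half-line, strict subadditivity for $0<m<\mu$) requires the strict bound $\mathcal E(\mu)<-\theta_p\mu^{\frac{p+2}{6-p}}$. The statement explicitly allows the boundary case $\mathcal E(\mu)=-\theta_p\mu^{\frac{p+2}{6-p}}$, and in that case a minimizing sequence may run off along the half-line mimicking $\varphi_\mu$, its weak limit has mass $m=0$, and nothing in your argument forces $m=\mu$: as written, the proof produces no minimizer even though the theorem asserts one exists. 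The paper disposes of this first: if $\mathcal E(\mu)=E(W)$ then $W$ itself is a ground state and there is nothing to prove; otherwise $\mathcal E(\mu)<E(W)\leq-\theta_p\mu^{\frac{p+2}{6-p}}$, and only with this strict inequality does the compactness machinery close. Since in the boundary case the hypothesis forces $E(W)=\mathcal E(\mu)$, the fix is one line -- but it must be said, because without it the step ``forcing $m=\mu$'' is false as stated.

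Two further inaccuracies are worth flagging. First, you attribute the exclusion of a zero weak limit to \eqref{bound3}; but $m=0$ also covers translation escape to infinity, which \eqref{bound3} does not rule out. This is precisely where the paper works: for $m=0$ one has $u_n(0)\to0$ and $q_n\to0$, the energy reduces to the free NLS energies, the half-line tail is compared with the full-line soliton via the auxiliary function $u_n^\star$, the planar channel is excluded by the strict inequality \eqref{comparison2} through the competitor $\Upsilon_\mu$, and the half-line channel by the strict bound $\mathcal E(\mu)<-\theta_p\mu^{\frac{p+2}{6-p}}$ from the previous point. Second, there are no compact Sobolev embeddings on $\R^+$ or $\R^2$: the final strong $L^p(\R^+)$ and $L^r(\R^2)$ convergence comes from the Gagliardo--Nirenberg inequalities \eqref{gn1} and \eqref{pre-gn2} combined with the strong $L^2$ convergence, not from compactness of an embedding. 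Relatedly, boundedness of the planar component is not ``standard'': the coefficient of $|q|^2$ may be negative and the singular part carries no gradient, so coercivity in $q$ is obtained only through the choice of decomposition parameter $\lambda_n=|q_n|^2$, which produces the $|q_n|^2\log|q_n|$ term; your sketch should acknowledge this point rather than fold it into generic trace estimates.
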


As a first consequence of Theorem \ref{thm:1}, we remark that a ground state always exists provided that the mass is small enough.
This conclusion comes straightforwardly from Theorem \ref{thm:1} and the bound \eqref{bound3}. Indeed,
since \eqref{bound3} is linear in $\mu$, while the bound in \eqref{excrit} is superlinear,
for small mass one has
$$ \mathcal E (\mu)   <  - \frac{E_{\rm{lin}}}{2} \mu
 <  - \theta_p \mu^{\frac{p+2}{6-p}}.
$$
So we proved the corollary below.

\begin{corollary}[Existence for small $\mu$] \label{smallmu}
Fix $\alpha,\, \rho \in \R, \, \beta \geq 0,\,p\in(2,6)$. Then, there exists $\mu^\star (p,\alpha, \rho, \beta) > 0$ such that for every $\mu \in(0, \mu^\star)$ there is a ground state at mass $\mu$ for the energy \eqref{e}.
\end{corollary}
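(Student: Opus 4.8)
The plan is to obtain the corollary directly from the necessary and sufficient condition of Theorem \ref{thm:1}, choosing as competitor the linear ground state $\Psi_\mu$ of $Q$ introduced in \eqref{quadratic}. By the a priori bound \eqref{bound3} one has $E(\Psi_\mu)<-\tfrac{E_{\rm lin}}{2}\mu$, whereas Theorem \ref{thm:1} asks for a function of $\mathcal D_\mu$ with energy not exceeding $-\theta_p\mu^{\frac{p+2}{6-p}}$. Hence it is enough to find $\mu^\star>0$ such that
\begin{equation}
-\frac{E_{\rm lin}}{2}\,\mu\;\leq\;-\theta_p\,\mu^{\frac{p+2}{6-p}},\qquad 0<\mu<\mu^\star,
\end{equation}
for then $W=\Psi_\mu\in\mathcal D_\mu$ satisfies \eqref{excrit} and a ground state at mass $\mu$ exists.

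First I would record that $E_{\rm lin}>0$: indeed, by definition \eqref{Elin} and the analysis carried out in Appendix \ref{sec:Elin}, $-E_{\rm lin}$ is the least eigenvalue of the self-adjoint operator associated with $Q$ and is strictly negative, so the right-hand side of \eqref{bound3} is a genuinely negative multiple of $\mu$. Dividing the desired inequality by $\mu>0$ and using $\frac{p+2}{6-p}-1=\frac{2p-4}{6-p}$, the condition rewrites as
\begin{equation}
\mu^{\frac{2p-4}{6-p}}\;\leq\;\frac{E_{\rm lin}}{2\theta_p}.
\end{equation}
Since $p\in(2,6)$, the exponent $\frac{2p-4}{6-p}$ is strictly positive, hence the inequality holds for every
\begin{equation}
0<\mu\leq\mu^\star:=\Big(\frac{E_{\rm lin}}{2\theta_p}\Big)^{\frac{6-p}{2p-4}},
\end{equation}
and the strictness of $E(\Psi_\mu)<-\tfrac{E_{\rm lin}}{2}\mu$ actually yields strict inequality in \eqref{excrit} on the whole interval $(0,\mu^\star]$, in particular on $(0,\mu^\star)$. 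The threshold $\mu^\star$ depends on $p$ through $\theta_p$ (see \eqref{standard1}) and on $\alpha,\rho,\beta$ only through $E_{\rm lin}=E_{\rm lin}(\alpha,\rho,\beta)$, exactly as in the statement.

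There is no genuine obstacle here: the corollary is a soft consequence of the variational characterization of Theorem \ref{thm:1} together with Remark \ref{rem:apriori}. The only point worth flagging is the strict positivity of $E_{\rm lin}$, which is precisely what makes the linear competitor $\Psi_\mu$ asymptotically cheaper —\,linearly in $\mu$\,— than the one-dimensional soliton $\varphi_\mu$, whose cost is superlinear because the exponent $\frac{p+2}{6-p}$ exceeds $1$ exactly when $p>2$. All the substantive work has already been carried out in establishing Theorem \ref{thm:1}.
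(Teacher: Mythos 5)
Your proposal is correct and follows essentially the same route as the paper: the paper also deduces the corollary by comparing the linear a priori bound \eqref{bound3}, realized by the competitor $\Psi_\mu$, with the superlinear soliton level $-\theta_p\mu^{\frac{p+2}{6-p}}$ and invoking Theorem \ref{thm:1}. The only (harmless) addition on your side is the explicit value of $\mu^\star$, which the paper leaves implicit.
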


In view of point (2) of Remark \ref{rem:apriori}, the second consequence of Theorem \ref{thm:1} is that, 
if the infimum of the free NLS energy \eqref{standard} on the plane is smaller than the analogous quantity on the line, then \eqref{excrit} is satisfied and a ground state exists. Such a condition amounts to
\begin{equation} \label{freecompar}
\theta_p \mu^{\frac{p+2}{6-p}}  <  \tau_r \mu^{\f 2 {4-r}},
\end{equation}
and gives for the mass the threshold value
\begin{equation} 
\label{massthreshold}
\mu_{p,r}  =  \left( \frac{\tau_r}{\theta_p} \right)^{\frac{pr-4p-6r+24}{6p -2r -pr -4}}.
\end{equation}
Since the exponent at the l.h.s. of \eqref{freecompar} is smaller than the exponent at the r.h.s. if and only if
\begin{equation}
\label{r_star}
r >  r^\star : = \f {6p-4}{p+2}
\end{equation}
the next corollary follows.

\begin{corollary} \label{interplay}
Fix $p\in(2,6), \, r\in(2,4)$.
If
\begin{equation} 
\label{eq:r-rstar-mu-range}
\begin{array}{c}
\displaystyle r< r^\star\quad \text{and}\quad \mu\in(0,\mu_{p,r}]\\[.1cm]
\text{or}\\[.1cm]
\displaystyle r> r^\star\quad\text{and}\quad \mu\geq \mu_{p,r},
\end{array}
\end{equation}
then there is a ground state at mass $\mu$ for the energy \eqref{e}.
\end{corollary}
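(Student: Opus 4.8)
The plan is to invoke Theorem~\ref{thm:1} and produce, in each of the two regimes of \eqref{eq:r-rstar-mu-range}, an explicit competitor $W\in\D_\mu$ satisfying $E(W)\le -\theta_p\mu^{\frac{p+2}{6-p}}$. The natural candidate is the state supported entirely on the plane, $\Upsilon_\mu=(0,v_\mu)$, where $v_\mu$ is the positive spherically symmetric ground state of $E_\rho(\cdot,\R^2)$ at mass $\mu$ whose existence and strict energy bound $E_\rho(v_\mu,\R^2)<E_{NLS}(\xi_\mu,\R^2)=-\tau_r\mu^{\frac2{4-r}}$ are recalled in point (2) of Remark~\ref{rem:apriori}. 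Thus it suffices to check $\tau_r\mu^{\frac2{4-r}}\ge\theta_p\mu^{\frac{p+2}{6-p}}$, i.e.\ condition \eqref{freecompar}; in fact the strict inequality in \eqref{bound2} gives a little room, so even equality of the two exponential terms is enough.

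First I would analyze the scalar inequality \eqref{freecompar}, $\theta_p\mu^{\frac{p+2}{6-p}}<\tau_r\mu^{\frac2{4-r}}$. Writing it as $\mu^{\frac{p+2}{6-p}-\frac2{4-r}}<\tau_r/\theta_p$ and computing the exponent,
\begin{equation}
\frac{p+2}{6-p}-\frac2{4-r}=\frac{(p+2)(4-r)-2(6-p)}{(6-p)(4-r)}=\frac{6p-2r-pr-4}{(6-p)(4-r)},
\end{equation}
one sees the denominator $(6-p)(4-r)$ is positive for $p\in(2,6)$, $r\in(2,4)$, so the sign of the exponent is that of $6p-2r-pr-4$. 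A direct rearrangement shows $6p-2r-pr-4>0\iff r<r^\star=\frac{6p-4}{p+2}$, which is exactly the dichotomy \eqref{r_star}. When $r<r^\star$ the exponent of $\mu$ is positive, so \eqref{freecompar} holds for all sufficiently small $\mu$, and solving $\mu^{\text{(positive exponent)}}\le\tau_r/\theta_p$ for the borderline value reproduces the threshold $\mu_{p,r}$ in \eqref{massthreshold} (note $pr-4p-6r+24=(6-p)(4-r)$ up to the sign bookkeeping, so the two displays are consistent); hence \eqref{freecompar} holds for every $\mu\in(0,\mu_{p,r}]$, with equality of the power terms allowed since \eqref{bound2} is strict. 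When $r>r^\star$ the exponent of $\mu$ is negative, so the inequality $\mu^{\text{(negative exponent)}}\le\tau_r/\theta_p$ holds precisely for $\mu\ge\mu_{p,r}$.

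In either case we conclude $E(\Upsilon_\mu)=E_\rho(v_\mu,\R^2)<-\tau_r\mu^{\frac2{4-r}}\le-\theta_p\mu^{\frac{p+2}{6-p}}=E_{NLS}(\varphi_\mu,\R)$, so $W=\Upsilon_\mu$ satisfies \eqref{excrit} and Theorem~\ref{thm:1} yields the existence of a ground state at mass $\mu$ for \eqref{e}. There is essentially no hard step here: the content is the purely algebraic verification that the exponent comparison governing \eqref{freecompar} matches the threshold $r^\star$ and that $\mu_{p,r}$ is the correct crossover mass, together with the observation that the strict inequality \eqref{bound2} lets us handle the boundary cases $\mu=\mu_{p,r}$. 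The only place to be slightly careful is the bookkeeping of signs when inverting the power inequality depending on whether the exponent of $\mu$ is positive ($r<r^\star$) or negative ($r>r^\star$), and checking that $pr-4p-6r+24$ and $(6-p)(4-r)$ agree so that the formula \eqref{massthreshold} is the genuine solution of $\theta_p\mu^{\frac{p+2}{6-p}}=\tau_r\mu^{\frac2{4-r}}$.
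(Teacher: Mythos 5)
Your proof is correct and follows the same route as the paper: use the competitor $\Upsilon_\mu=(0,v_\mu)$ with the strict bound \eqref{bound2} from Remark \ref{rem:apriori}(2), compare the exponents in \eqref{freecompar} to get the dichotomy at $r^\star$ and the crossover mass $\mu_{p,r}$, and then invoke Theorem \ref{thm:1} (your algebraic identity $pr-4p-6r+24=(6-p)(4-r)$ is exact, so the threshold matches \eqref{massthreshold} with no sign caveat needed). The handling of the boundary case $\mu=\mu_{p,r}$ via the strictness of \eqref{bound2} is also consistent with the paper's statement.
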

Unfortunately, we are not able to treat the case $r = r^\star$ since, to our knowledge, a comparison between $\theta_p$ and $\tau_r$ is not available.

The third consequence of Theorem \ref{thm:1} is that, whenever a ground state $\varphi_{\alpha, \mu}$ at mass $\mu$ for the functional \eqref{e1} exists, condition \eqref{excrit} is fulfilled by the state
$\Phi_{\alpha,\mu} : = (\varphi_{\alpha, \mu}, 0)$. Indeed one has $\| \Phi_{\alpha, \mu} \|_{L^2 (\I)}^2 = \mu$ and 
$$ E (\Phi_{\alpha, \mu}) = E_\alpha (\varphi_{\alpha, \mu}, \R^+)  \leq  E_{NLS} (\varphi_\mu, \R).$$
Thus, one inherits the next result, proved in \cite[Theorems 1.2 and 1.3]{BC-23} and summarized in the corollary below.

\noindent
\begin{corollary}[Existence for large $\mu$] \label{largemu}

Fix $\alpha \in \R,\,p\in(2,6)$. Therefore:
\begin{enumerate}
\item when $\alpha \leq 0$ there is a ground state at mass $\mu$ for the energy \eqref{e} for every $\mu > 0$;
\item when $\alpha > 0$ and $p\in(2,4]$, if $\mu > \mu_p (\alpha)$, where $\mu_p (\alpha) > 0$ is the mass of the one-dimensional soliton that solves 
$$ u'' + |u|^{p-2}u  =  \alpha^2 u, $$
then there is a ground state at mass $\mu$ for the energy \eqref{e};
\item when $\alpha > 0$ and $p\in(4,6)$ there exists $\widetilde \mu \in(0,\mu_p (\alpha))$ such that, if $\mu > \widetilde \mu$, then there is a ground state at mass $\mu$ for the energy \eqref{e}.
\end{enumerate}
\end{corollary}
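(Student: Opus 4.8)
The plan is to derive Corollary \ref{largemu} directly from the sharp existence criterion of Theorem \ref{thm:1}, using as test state one that is entirely supported on the half-line. First I would recall from \cite[Theorems~1.2 and 1.3]{BC-23} the description of the regimes in which the one-dimensional functional $E_\alpha(\cdot,\R^+)$ in \eqref{e1} admits a ground state at mass $\mu$: for $\alpha\le 0$ this happens for every $\mu>0$; for $\alpha>0$ and $p\in(2,4]$ it happens exactly when $\mu>\mu_p(\alpha)$, with $\mu_p(\alpha)$ the mass of the soliton solving $u''+|u|^{p-2}u=\alpha^2u$; and for $\alpha>0$ and $p\in(4,6)$ it happens when $\mu$ exceeds a threshold $\widetilde\mu\in(0,\mu_p(\alpha))$. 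Denote by $\varphi_{\alpha,\mu}$ such a ground state whenever it exists, and set $\Phi_{\alpha,\mu}:=(\varphi_{\alpha,\mu},0)$, which clearly belongs to $\D_\mu$.

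The second step is the energy comparison $E_\alpha(\varphi_{\alpha,\mu},\R^+)\le E_{NLS}(\varphi_\mu,\R)$. This is immediate from the truncation construction already carried out in Remark \ref{rem:apriori}: the normalized restrictions to $\R^+$ of the translated free soliton $\varphi_\mu(\cdot-n)$ belong to $H^1(\R^+)$, have mass $\mu$, have trace at the origin vanishing as $n\to+\infty$, and their $E_\alpha(\cdot,\R^+)$-energy converges to $E_{NLS}(\varphi_\mu,\R)=-\theta_p\mu^{(p+2)/(6-p)}$. Hence $\inf\{E_\alpha(u,\R^+):\|u\|_{L^2(\R^+)}^2=\mu\}\le E_{NLS}(\varphi_\mu,\R)$, and whenever this infimum is attained, namely by $\varphi_{\alpha,\mu}$, one gets precisely $E_\alpha(\varphi_{\alpha,\mu},\R^+)\le E_{NLS}(\varphi_\mu,\R)$.

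Finally, a state of the form $(u,0)$ has $\phi=0$ and charge $q=0$, so all the planar and coupling terms in \eqref{e} vanish and $E(\Phi_{\alpha,\mu})=E_\alpha(\varphi_{\alpha,\mu},\R^+)$. Combining with the previous step, $\Phi_{\alpha,\mu}\in\D_\mu$ satisfies $E(\Phi_{\alpha,\mu})\le E_{NLS}(\varphi_\mu,\R)$, which is exactly the sufficient condition \eqref{excrit} of Theorem \ref{thm:1}; therefore a ground state at mass $\mu$ for \eqref{e} exists. Reading off the three regimes of existence of $\varphi_{\alpha,\mu}$ listed above yields the three cases of the statement.

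As for difficulties, essentially none remain: the analytical substance has been absorbed into Theorem \ref{thm:1} (the concentration-compactness dichotomy together with the exclusion of mass splitting and of escape through the plane) and into the half-line analysis of \cite{BC-23}. The only points that deserve care are that the test state $(u,0)$ is admissible, i.e. that $v=0$ forces $q=0$ so that $E(\Phi_{\alpha,\mu})$ really equals the half-line energy, and the faithful transcription of the thresholds $\mu_p(\alpha)$ and $\widetilde\mu$ from \cite{BC-23}; in particular one should keep in mind that for $p\in(4,6)$ the half-line problem, although $L^2$-subcritical, mimics the $L^2$-supercritical behaviour on the line, which is why a ground state may appear strictly below $\mu_p(\alpha)$.
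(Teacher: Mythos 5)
Your proposal is correct and follows exactly the paper's own argument: existence of the half-line ground state $\varphi_{\alpha,\mu}$ in the three regimes is imported from \cite[Theorems 1.2 and 1.3]{BC-23}, the competitor $\Phi_{\alpha,\mu}=(\varphi_{\alpha,\mu},0)\in\D_\mu$ has $E(\Phi_{\alpha,\mu})=E_\alpha(\varphi_{\alpha,\mu},\R^+)\leq E_{NLS}(\varphi_\mu,\R)$ by the truncated-soliton comparison of Remark \ref{rem:apriori}, and Theorem \ref{thm:1} then yields existence on the hybrid. No gaps.
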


At this point we know that there exist ground states for small and large values of the mass. What happens in the intermediate region of masses may depend on the values of the other parameters of the problem. 

The next corollary shows that the existence of ground states for fixed $\mu$ and $p$ is ensured provided $\alpha$ is negative or below a certain treshold (depending on both $\mu$ and $p$). This result is obtained just rephrasing \cite[Proposition 1.4]{BC-23}, that discusses the existence of ground states at mass $\mu$ for \eqref{e1} as $\alpha$ varies. 

\begin{corollary}[Existence for small or negative $\alpha$]\label{smallalfa}
For any $\mu > 0$ and any $p\in(2,6)$ there is a number $\alpha_p (\mu) > 0$ such that,
if 
\begin{equation}
\label{eq:alpha-mu}
\begin{array}{c}
\displaystyle \alpha < \alpha_p (\mu) \\[.1cm]
 \text{or}\\[.1cm]
\displaystyle \alpha = \alpha_p(\mu)\quad \text{and}\quad 4 < p < 6,
\end{array}
\end{equation}
then there exists a ground state at mass $\mu$ for the energy \eqref{e}. Furthermore, $\alpha_p$ is a strictly monotonically increasing function of the mass $\mu$ and satisfies
\begin{equation}
        \label{alfa}
        \alpha_p (\mu)  \ \left\{ \begin{array}{cc} 
              : = &  C_p \mu^{\frac{p-2}{6-p}}, \qquad 2 < p \leq 4 \\ & \\ > & C_p \mu^{\frac{p-2}{6-p}}, \qquad
             4 < p < 6,
        \end{array} \right.
    \end{equation}
where
$$
C_p = \left( \f 2 p \right)^{\f 2 {6-p}} \left( \frac{p-2}
{4 \int_0^1 (1 - s^2)^{\f {4-p}{p-2}}ds}
\right)^{\f {p-2}{6-p}}
$$
\end{corollary}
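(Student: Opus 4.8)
The plan is to obtain Corollary \ref{smallalfa} directly from the existence criterion of Theorem \ref{thm:1} together with the one-dimensional analysis of \cite{BC-23}: no fresh variational work on the hybrid is needed.

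\textbf{Step 1: reduction to the half-line.} Let $\varphi_{\alpha,\mu}$ denote, \emph{when it exists}, a ground state at mass $\mu$ of the functional $E_\alpha(\cdot,\R^+)$ in \eqref{e1}, i.e. a minimizer of $E_\alpha(\cdot,\R^+)$ among the $u\in H^1(\R^+)$ with $\|u\|_{L^2(\R^+)}^2=\mu$. Since $E_\alpha(\cdot,\R^+)$ is the restriction of $E$ to states with $v=0$, the state $\Phi_{\alpha,\mu}:=(\varphi_{\alpha,\mu},0)$ lies in $\D_\mu$ with $E(\Phi_{\alpha,\mu})=E_\alpha(\varphi_{\alpha,\mu},\R^+)$. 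On the other hand, the truncation argument in point (1) of Remark \ref{rem:apriori} gives
$$\inf\bigl\{E_\alpha(u,\R^+):u\in H^1(\R^+),\ \|u\|_{L^2(\R^+)}^2=\mu\bigr\}\ \leq\ E_{NLS}(\varphi_\mu,\R),$$
hence $E(\Phi_{\alpha,\mu})\leq E_{NLS}(\varphi_\mu,\R)$, so $W=\Phi_{\alpha,\mu}$ fulfils \eqref{excrit}; by Theorem \ref{thm:1} a ground state at mass $\mu$ for \eqref{e} then exists. The problem therefore reduces to deciding for which $\alpha$ the half-line functional \eqref{e1} possesses a ground state at mass $\mu$.

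\textbf{Step 2: the one-dimensional threshold.} This is the content of \cite[Proposition 1.4]{BC-23}, which I would simply import: the set of $\alpha$ for which \eqref{e1} admits a ground state at mass $\mu$ is a lower half-line, bounded above by a value $\alpha_p(\mu)>0$, with existence at the endpoint precisely when $4<p<6$. The scaling $u\mapsto\sigma^{2/(p-2)}u(\sigma\,x)$ turns the mass-$\mu$ problem with parameter $\alpha$ into the mass-$1$ problem with parameter $\alpha\,\mu^{-(p-2)/(6-p)}$, whence $\alpha_p(\mu)=\alpha_p(1)\,\mu^{(p-2)/(6-p)}$. For $2<p\leq4$ the constant $\alpha_p(1)=C_p$ is explicit: the minimizing configuration for \eqref{e1} is a truncated, rescaled copy of the $p$-soliton, proportional to $(\cosh(b\,x))^{-2/(p-2)}$; computing its $L^2$-mass yields the integral $\int_0^1(1-s^2)^{(4-p)/(p-2)}\,ds$, and imposing that its energy meets the right-hand side of \eqref{excrit} fixes $C_p$. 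For $4<p<6$ only the bound $\alpha_p(1)>C_p$ is available, whence \eqref{alfa}. Plugging this back into Step 1 proves the existence statement, and positivity of $\alpha_p$ (part of the claim) follows from $C_p>0$.

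\textbf{Step 3: monotonicity, and where the real difficulty lies.} Monotonicity of $\alpha_p$ in $\mu$ is then automatic, since $\alpha_p(\mu)=\alpha_p(1)\mu^{(p-2)/(6-p)}$ with $\alpha_p(1)>0$ and exponent $(p-2)/(6-p)>0$. Thus the whole substance is in \cite[Proposition 1.4]{BC-23}, not in the present deduction: one must show that the existence region for \eqref{e1} is indeed a half-line, that its endpoint $\alpha_p(\mu)$ is positive, that for $2<p\leq4$ the soliton-type competitor saturates \eqref{excrit} exactly at the endpoint without attaining it (so that no ground state survives there) whereas for $4<p<6$ a genuine minimizer persists, and one must evaluate $C_p$. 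None of this needs to be redone on the hybrid, because Theorem \ref{thm:1} transfers one-dimensional existence verbatim; when instead $\alpha\geq\alpha_p(\mu)$ (with $p\leq4$) this competitor is unavailable and a ground state for \eqref{e}, if it still exists, must be produced through the planar or the coupling competitors, i.e. via Corollaries \ref{smallmu}, \ref{largemu}, \ref{interplay} and \ref{limits}.
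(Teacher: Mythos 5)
Your proposal is correct and takes essentially the same route as the paper: there, too, the corollary is obtained by taking the half-line ground state $\varphi_{\alpha,\mu}$ — whose existence exactly under \eqref{eq:alpha-mu}, together with the value, positivity and monotonicity of $\alpha_p(\mu)$, is simply imported from \cite[Proposition 1.4]{BC-23} — and using $W=(\varphi_{\alpha,\mu},0)$ as competitor in Theorem \ref{thm:1}, with point (1) of Remark \ref{rem:apriori} guaranteeing $E(W)=E_\alpha(\varphi_{\alpha,\mu},\R^+)\leq E_{NLS}(\varphi_\mu,\R)$. The scaling computation and the sketch of how $C_p$ arises that you add are consistent but not needed, being part of what is cited from \cite{BC-23}.
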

\noindent
\begin{remark}
\label{rem:al-p}
The threshold $\alpha_p$ is independent of $\rho$ and $\beta$ and is sharp for the problem on the half-line. Indeed, \cite[Proposition 1.4]{BC-23} guarantees that, if \eqref{eq:alpha-mu} is not satisfied, i.e. 
\begin{equation}
\label{eq:alpha-mu2}
\begin{array}{c}
\displaystyle \alpha>\alpha_p(\mu) \\[.1cm]
\text{or} \\[.1cm]
\displaystyle \alpha=\alpha_p(\mu) \quad\text{and}\quad 2<p\leq 4,
\end{array}
\end{equation}
then ground states for $E_\alpha(\cdot,\Rp)$ at mass $\mu$ do not exist and the level $E_{NLS}(\varphi_\mu,\R)$ is not reached.
\end{remark}
\medskip

On the other hand, as the following corollary illustrates, for fixed $\mu$ it is sufficient to make either $\rho$ small or $\beta$ large enough to obtain the existence of a ground state, also when \eqref{eq:alpha-mu} is not satisfied.

\begin{corollary}[Existence for small $\rho$ or large $\beta$] \label{limits}
For every $\mu > 0$ there exist $\widetilde\rho\in \R$ and $\widetilde\beta\geq 0$ such that, if $\rho \leq \widetilde\rho$ or $\beta \geq \widetilde\beta$, then there is a ground state at mass $\mu$ for the energy \eqref{e}. 
\end{corollary}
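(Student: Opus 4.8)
The plan is to use the necessary and sufficient criterion of Theorem \ref{thm:1}: it suffices to exhibit, for the given $\mu>0$, a competitor $W\in\mathcal D_\mu$ with $E(W)\leq -\theta_p\mu^{\frac{p+2}{6-p}}$, and this must be done under the stated smallness of $\rho$ (resp. largeness of $\beta$). A natural candidate in both regimes is the linear ground state $\Psi_\mu=\sqrt\mu\,\Psi_1$ of the quadratic form $Q$, for which \eqref{bound3} gives $\mathcal E(\mu)<-\frac{E_{\rm lin}}{2}\mu$. Since the right-hand side of \eqref{excrit} is a fixed negative number once $\mu$ is fixed, it is enough to show that $E_{\rm lin}=E_{\rm lin}(\alpha,\rho,\beta)$ can be made as large as we wish by sending $\rho\to-\infty$ or $\beta\to+\infty$; then for $\rho$ small enough or $\beta$ large enough one has $\frac{E_{\rm lin}}{2}\mu>\theta_p\mu^{\frac{p+2}{6-p}}$ and the criterion is met.

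Concretely, the first step is to analyse the variational characterization \eqref{Elin}, $-E_{\rm lin}=\inf_{U\neq0}Q(U)/\|U\|_{L^2(\I)}^2$. To get a lower bound on $E_{\rm lin}$ (i.e. to show $Q$ is very negative on some normalized state) I would plug in test states of the form $U=(0,v)$ with $v=\phi+qK_0/2\pi$, choosing $\phi$ to localize the $H^1(\R^2)$ part and $q$ to exploit the coefficient $(\rho+\frac{\gamma-\log2}{2\pi})\frac{|q|^2}{2}$: for $\rho\to-\infty$ this coefficient $\to-\infty$, so a unit-mass state concentrated on a large multiple of $K_0$ drives $Q$ to $-\infty$, whence $E_{\rm lin}\to+\infty$. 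For the $\beta$ regime one instead uses coupled test states $U=(u,v)$ with $u(0)$ and $q$ both of definite sign and comparable size, so that the term $-2\beta\Re(q\overline{u(0)})=-2\beta\,q\,u(0)$ dominates; again $Q\to-\infty$ along a normalized family as $\beta\to+\infty$, giving $E_{\rm lin}(\alpha,\rho,\beta)\to+\infty$. The monotonicity/continuity of $E_{\rm lin}$ in the parameters (clear from \eqref{Elin}, since decreasing $\rho$ or increasing $\beta$ only decreases $Q$ pointwise on a fixed $U$) then lets me define $\widetilde\rho$ and $\widetilde\beta$ as the thresholds past which $\frac{E_{\rm lin}}{2}\mu\geq\theta_p\mu^{\frac{p+2}{6-p}}$, and for all $\rho\leq\widetilde\rho$ or $\beta\geq\widetilde\beta$ the bound \eqref{bound3} forces $\mathcal E(\mu)\leq -\theta_p\mu^{\frac{p+2}{6-p}}$, so a ground state exists by Theorem \ref{thm:1}.

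There is one subtlety to handle carefully: to conclude existence via Theorem \ref{thm:1} I need a genuine competitor $W\in\mathcal D_\mu$ attaining $E(W)\leq -\theta_p\mu^{\frac{p+2}{6-p}}$, and \eqref{bound3} already provides exactly that, namely $W=\Psi_\mu$, since \eqref{bound3} is a strict inequality $E(\Psi_\mu)<\frac12 Q(\Psi_\mu)=-\frac{E_{\rm lin}}{2}\mu$ coming from the strictly negative nonlinear terms. So once $-\frac{E_{\rm lin}}{2}\mu\leq -\theta_p\mu^{\frac{p+2}{6-p}}$ the competitor $\Psi_\mu$ does the job directly, with no need to pass to the infimum $\mathcal E(\mu)$ itself. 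Thus the whole argument reduces to the single analytic fact $E_{\rm lin}(\alpha,\rho,\beta)\to+\infty$ as $\rho\to-\infty$ or as $\beta\to+\infty$ (uniformly in the other, fixed, parameters), plus its monotonicity.

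The main obstacle I expect is making the divergence $E_{\rm lin}\to+\infty$ rigorous, because the singular component $qK_0/2\pi$ is not in $H^1(\R^2)$ and the norm $\|v\|_{L^2(\R^2)}^2$ in both $Q$ and the mass constraint mixes $\phi$, $q$ and the cross term $\frac{q}{2\pi}\langle\phi,K_0\rangle$ in a nontrivial way; one must control these cross terms while forcing $|q|$ large at fixed mass. The clean way around this is to work with the explicit quadratic form on the pair $(u(0),q)\in\C^2$ obtained after minimizing out the regular parts $u'$ and $\phi$ (this is precisely the reduced "boundary" form appearing in the Krein/von Neumann construction and in Appendix \ref{sec:Elin}): $E_{\rm lin}$ is then the top of the spectrum of an explicit $2\times2$-type problem whose entries contain $\alpha$, $\rho+\frac{\gamma-\log2}{2\pi}$ and $-\beta$ on the off-diagonal, and divergence of its lowest generalized eigenvalue as $\rho\to-\infty$ or $\beta\to+\infty$ is then a finite-dimensional linear-algebra computation. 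Once Appendix \ref{sec:Elin} is invoked for this reduction, the remaining steps are routine.
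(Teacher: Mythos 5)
Your argument is correct in substance, but it takes a genuinely different and more roundabout route than the paper. The paper does not go through the linear ground state at all: it simply fixes one competitor $U=(u,v)\in\mathcal D_\mu$ with $q\neq 0$ (and, for the $\beta$-regime, with $q$ and $u(0)$ positive), observes that $E(U)$ is affine and decreasing in $\rho$ through the term $\rho\,|q|^2/2$ (resp.\ decreasing in $\beta$ through $-\beta\, q\, u(0)$), so that $E(U)\to-\infty$ as $\rho\to-\infty$ (resp.\ $\beta\to+\infty$), and then defines $\widetilde\rho$ (resp.\ $\widetilde\beta$) as the threshold past which $E(U)\leq E_{NLS}(\varphi_\mu,\R)$, invoking Theorem \ref{thm:1}; monotonicity in the parameter makes the conclusion automatic for all $\rho\leq\widetilde\rho$ (resp.\ $\beta\geq\widetilde\beta$). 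Your route instead uses $\Psi_\mu$ together with \eqref{bound3} and reduces everything to the divergence $E_{\rm lin}(\alpha,\rho,\beta)\to+\infty$, which is true (it can be read off Appendix \ref{sec:Elin}, or obtained from \eqref{Elin} with a fixed normalized test state with $q\neq0$, resp.\ $q\,u(0)>0$), but it is strictly more machinery than needed: you must know that the infimum in \eqref{Elin} is attained (or replace $\Psi_\mu$ by an almost-minimizer), and you must quantify the growth of $E_{\rm lin}$, whereas the paper's competitor argument only needs the sign of one coefficient. Two small points to fix if you keep your route: the claimed monotonicity ``increasing $\beta$ only decreases $Q$ pointwise on a fixed $U$'' is false as stated, since the coupling term has the wrong sign on states with $\Re(q\overline{u(0)})<0$; it holds only after a phase rotation of $v$ (or, more simply, work directly with a fixed test state having $q\,u(0)>0$ real, for which $Q$ is monotone in $\beta$, which also gives you the ``for all $\beta\geq\widetilde\beta$'' statement without appealing to monotonicity of $E_{\rm lin}$ itself). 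Likewise, for $\rho\to-\infty$ no concentration on a ``large multiple of $K_0$'' is needed: any fixed state with $q\neq0$ already sends $Q$ to $-\infty$ linearly in $\rho$.
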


Note that the existence of $\widetilde\rho$ does not depend on the other parameters of the problem, while its value may depend on them in general. Indeed, if one fixes a state $U = (u,v)\in\D_\mu$ with $q\neq0$, then
\[
\lim_{\rho\to-\infty}E(U)=-\infty.
\]
Therefore, there is a value $\widetilde\rho$ for which $E(U) \leq E_{NLS} (\varphi_\mu, \R)$ for every $\rho\leq\widetilde\rho$, entailing the existence of ground states by Theorem \ref{thm:1}. An analogous result can be established for $\widetilde\beta$ just assuming $q$ and $u(0)$ positive and letting $\beta\to+\infty$.

Now, we focus on nonexistence. To such aim, according to Theorem \ref{thm:1}, one has to prove that the energy of any state exceeds that of the one-dimensional soliton with the same mass. This is guaranteed if $\alpha$ and $\rho$ are large and $\beta$ is small. 

\begin{theorem}[Nonexistence]
\label{thm:2}
Let $p\in(2,6),\,r\in(2,4)\setminus\{r^\star\},\,\mu>0$ . Assume also that \eqref{eq:r-rstar-mu-range} does not hold and \eqref{eq:alpha-mu2} holds. There exists $\rho^\star(r,p,\mu)\in\R$ such that:
\begin{enumerate}[label=(\roman*)]
\item if $\beta=0$, then ground states at mass $\mu$ for the energy \eqref{e} do not exist if and only if $\rho>\rho^\star$;
\item if $\beta>0$ and $\alpha\neq \alpha_p(\mu)$, setting
$$k^\star := \frac{\beta^2}{\alpha - \alpha_p (\mu)},$$
then ground states at mass $\mu$ for the energy \eqref{e} exist when $\rho\leq \rho^\star$ and do not exist when $\rho > \rho^\star + k^\star$ or $\rho=\rho^\star+k^\star$ and $p\in(2,4]$.
\end{enumerate}
\end{theorem}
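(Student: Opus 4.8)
By Theorem~\ref{thm:1} a ground state at mass $\mu$ exists if and only if some $W\in\D_\mu$ satisfies $E(W)\leq-\theta_p\mu^{(p+2)/(6-p)}$; thus the plan is to prove nonexistence by showing $E(W)>-\theta_p\mu^{(p+2)/(6-p)}$ for \emph{every} $W\in\D_\mu$, and to prove existence by exhibiting one state reaching that level. Set $a:=\frac{p+2}{6-p}>1$, $b:=\frac{2}{4-r}>1$ and, for $n>0$, $\mathcal E_\rho^{\mathrm{pl}}(n):=\inf\{E_\rho(v,\R^2):\|v\|_{L^2(\R^2)}^2=n\}$. The first step is to record the plane facts: by \cite{ABCT-22} this infimum is attained and $\mathcal E_\rho^{\mathrm{pl}}(n)<-\tau_r n^{b}$; testing $E_\rho$ with $\sqrt\lambda\,v$ and using $r>2$ gives $\mathcal E_\rho^{\mathrm{pl}}(\lambda n)\leq\lambda\,\mathcal E_\rho^{\mathrm{pl}}(n)$ for $\lambda\geq1$, equivalently $\mathcal E_\rho^{\mathrm{pl}}(n)\geq\frac n\mu\,\mathcal E_\rho^{\mathrm{pl}}(\mu)$ whenever $n\leq\mu$; and, again from \cite{ABCT-22}, $\rho\mapsto\mathcal E_\rho^{\mathrm{pl}}(\mu)$ is continuous, strictly increasing (its minimiser has nonzero charge, lest it belong to $H^1(\R^2)$ and contradict $\mathcal E_\rho^{\mathrm{pl}}(\mu)<-\tau_r\mu^{b}$), with $\mathcal E_\rho^{\mathrm{pl}}(\mu)\to-\infty$ as $\rho\to-\infty$ and $\mathcal E_\rho^{\mathrm{pl}}(\mu)\to-\tau_r\mu^{b}$ as $\rho\to+\infty$. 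Since $r\neq r^\star$ and \eqref{eq:r-rstar-mu-range} fails, a direct check using \eqref{massthreshold} and \eqref{r_star} shows $-\theta_p\mu^{a}<-\tau_r\mu^{b}$; hence there is a unique $\rho^\star=\rho^\star(r,p,\mu)$ with $\mathcal E_{\rho^\star}^{\mathrm{pl}}(\mu)=-\theta_p\mu^{a}$, and $\mathcal E_\rho^{\mathrm{pl}}(\mu)>-\theta_p\mu^{a}$ exactly when $\rho>\rho^\star$. The existence halves of (i) and (ii) (for every $\beta\geq0$ and $\rho\leq\rho^\star$) then follow at once: the state $(0,v_\mu)$, with $v_\mu$ a ground state of $E_\rho(\cdot,\R^2)$ at mass $\mu$, lies in $\D_\mu$ and has energy $\mathcal E_\rho^{\mathrm{pl}}(\mu)\leq-\theta_p\mu^{a}$, so Theorem~\ref{thm:1} applies.

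For the nonexistence part of (i) I would fix $\beta=0$, $\rho>\rho^\star$ and $W=(u,v)\in\D_\mu$, put $m:=\|u\|_{L^2(\R^+)}^2$, $n:=\mu-m$, so $E(W)=E_\alpha(u,\R^+)+E_\rho(v,\R^2)$. Because \eqref{eq:alpha-mu2} holds and $\alpha_p$ is strictly increasing, $\alpha>\alpha_p(m)$ for $m<\mu$ and \eqref{eq:alpha-mu2} holds at $m=\mu$, so by \cite[Proposition 1.4]{BC-23} (cf.\ Remark~\ref{rem:al-p}) $-\theta_p m^{a}$ is the infimum of $E_\alpha(\cdot,\R^+)$ at mass $m$ and is \emph{not} attained, whence $E_\alpha(u,\R^+)>-\theta_p m^{a}$ for $m>0$. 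Three cases: if $m=\mu$ this already gives $E(W)>-\theta_p\mu^{a}$; if $m=0$ then $E(W)=E_\rho(v,\R^2)\geq\mathcal E_\rho^{\mathrm{pl}}(\mu)>-\theta_p\mu^{a}$; and if $0<m<\mu$, combining $E_\alpha(u,\R^+)>-\theta_p m^{a}$ with $\mathcal E_\rho^{\mathrm{pl}}(n)\geq\frac n\mu\mathcal E_\rho^{\mathrm{pl}}(\mu)\geq-\theta_p\mu^{a-1}n$ and the elementary bound $m^{a}\leq\mu^{a-1}m$ (as $m\leq\mu$, $a>1$) gives $E(W)>-\theta_p m^{a}-\theta_p\mu^{a-1}(\mu-m)\geq-\theta_p\mu^{a}$. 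Thus $E(W)>-\theta_p\mu^{a}$ for all $W$, and Theorem~\ref{thm:1} rules out ground states; with the existence half this settles the ``if and only if'' of (i).

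Case (ii), where $\alpha\neq\alpha_p(\mu)$ forces $\alpha>\alpha_p(\mu)$ and hence $k^\star=\frac{\beta^2}{\alpha-\alpha_p(\mu)}>0$, I would reduce to (i) by absorbing the coupling. Given $W=(u,v)\in\D_\mu$ with $w:=u(0)$ and charge $q$, first split the contact term, $E_\alpha(u,\R^+)=E_{\alpha_p(\mu)}(u,\R^+)+\frac{\alpha-\alpha_p(\mu)}{2}|w|^2$, and bound $E_{\alpha_p(\mu)}(u,\R^+)\geq-\theta_p m^{a}$ (strictly when $m<\mu$, since then $\alpha_p(\mu)>\alpha_p(m)$ and $-\theta_p m^a$ is not attained); then apply Young's inequality $\frac{\alpha-\alpha_p(\mu)}{2}|w|^2-\beta\,\Re(q\overline{w})\geq-\frac{k^\star}{2}|q|^2$; finally use the identity $E_\rho(v,\R^2)-\frac{k^\star}{2}|q|^2=E_{\rho-k^\star}(v,\R^2)$, valid because the $|q|^2$-coefficient in \eqref{e2} is affine in $\rho$. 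Together these give $E(W)\geq-\theta_p m^{a}+E_{\rho-k^\star}(v,\R^2)\geq-\theta_p m^{a}+\mathcal E_{\rho-k^\star}^{\mathrm{pl}}(\mu-m)$, which is exactly the estimate of the previous paragraph with $\rho$ replaced by $\rho-k^\star$. When $\rho>\rho^\star+k^\star$ one has $\rho-k^\star>\rho^\star$, and rerunning the three-case argument (using $E_\alpha(u,\R^+)>-\theta_p\mu^{a}$ at $m=\mu$ since $\alpha>\alpha_p(\mu)$, and $\mathcal E_\rho^{\mathrm{pl}}(\mu)>-\theta_p\mu^{a}$ at $m=0$ since $\rho>\rho^\star$) yields $E(W)>-\theta_p\mu^{a}$ for all $W$, hence nonexistence by Theorem~\ref{thm:1}.

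I expect the main obstacle to be the careful bookkeeping of strict versus non-strict inequalities at the boundary configurations, since Theorem~\ref{thm:1} needs only a non-strict inequality. The delicate points are $m=\mu$, where whether $-\theta_p\mu^{a}$ is attained on the half-line depends on the position of $\alpha$ relative to $\alpha_p(\mu)$ and, in the limiting case $\rho=\rho^\star+k^\star$, on whether $p\in(2,4]$ or $p\in(4,6)$ (this is precisely the source of the $p\in(2,4]$ restriction in the last clause of (ii)), and the value $\rho=\rho^\star$, where the competitor $(0,v_\mu)$ meets the threshold only with equality. The second, more technical ingredient is establishing rigorously the properties of $\rho\mapsto\mathcal E_\rho^{\mathrm{pl}}(\mu)$ (continuity, strict monotonicity, the limit $-\tau_r\mu^{b}$ as $\rho\to+\infty$) and the scaling inequality for $\mathcal E_\rho^{\mathrm{pl}}$, for which one leans on the variational analysis of the two-dimensional problem in \cite{ABCT-22}.
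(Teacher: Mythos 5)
Your proposal is correct and, at its core, follows the same strategy as the paper: everything is reduced to the criterion of Theorem \ref{thm:1}; the threshold $\rho^\star$ is defined through the continuity, strict monotonicity and limits of $\rho\mapsto\Eps_\rho(\mu,\Rd)$; existence for $\rho\leq\rho^\star$ comes from the competitor $(0,v_\mu)$; and for $\beta>0$ the coupling is absorbed by completing the square, which shifts $\rho$ by exactly $k^\star=\beta^2/(\alpha-\alpha_p(\mu))$ (the paper does the same via Young's inequality with a free parameter $t$ and then the choice $t^\star=\beta/(\alpha-\alpha_p(\mu))$). The implementation of the decoupled nonexistence differs: the paper first proves $\Eps(\mu)=\min\{\Eps_\alpha(\mu,\Rp),\Eps_\rho(\mu,\Rd)\}$ via concavity/strict concavity of the split energies in the mass (Lemmas \ref{Fal-conc} and \ref{F-sig-strict-conc}), whereas you estimate each state directly through the scaling bound $\Eps_\rho(n,\Rd)\geq\frac n\mu\Eps_\rho(\mu,\Rd)$ for $n\leq\mu$ together with $m^{a}\leq\mu^{a-1}m$, $a=\frac{p+2}{6-p}$; these are equivalent in substance (both are concavity through the origin), but your version has a genuine advantage in case (ii): at $m=\mu$ the planar component and the charge vanish, so the coupling term disappears and the strict bound $E_\alpha(u,\Rp)>-\theta_p\mu^{a}$ with the original $\alpha>\alpha_p(\mu)$ applies, while for $0<m<\mu$ strictness comes from $\alpha_p(\mu)>\alpha_p(m)$. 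In particular your three-case estimate actually settles the borderline case $\rho=\rho^\star+k^\star$ — which you only flag as delicate and do not write out, although it is part of the statement — and it does so seemingly for every $p\in(2,6)$, whereas the paper restricts to $p\in(2,4]$ there because its choice $t=t^\star$ forces it to invoke the equality case of \eqref{eq:alpha-mu2}. One substantive caveat: the properties of $\rho\mapsto\Eps_\rho(\mu,\Rd)$ that you quote from \cite{ABCT-22} are not all available there; in particular the limit $\Eps_\rho(\mu,\Rd)\to-\tau_r\mu^{\frac{2}{4-r}}$ as $\rho\to+\infty$ is proved in this paper (Lemma \ref{lem:Erho}), the nontrivial points being that the charge of the planar ground state satisfies $q_\rho\to0$ and $\rho q_\rho^2\to0$; this ingredient must be supplied along those lines rather than cited, exactly as you anticipate in your closing paragraph.
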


 Notice that when $\beta>0$ the lower threshold $\rho^\star+k^\star$ for the nonexistence may be larger than the analogous threshold in the case $\beta = 0$, namely $\rho^\star$. 
 Indeed, the presence of $\beta$ may favour the existence of a ground state. To understand this fact, consider the family of states $U_\theta = (u, e^{i \theta} v)$. If
$\beta = 0$, then all such states have the same energy. If $\beta > 0$, then the energy of $U_\theta$ presents an additional term $- \beta \Re (e^{i \theta} q {\overline {u (0)}})$, that can be minimized by choosing the phase $\theta$ so to make 
$e^{i \theta} q {\overline {u (0)}}$ real and positive. This implies that a positive $\beta$ may lower the infimum $\mathcal E (\mu)$ and possibly push it below the threshold given in \eqref{excrit} and then entail existence. In order to compensate this effect and restore nonexistence, the parameter $\rho$ must be pushed forward, and $k^\star$ is a quantitative measure of this need. On the other hand, we are not able to prove that the expression given for $k^\star$ is optimal or to rule out situations for which the optimal $k^\star$ is zero (while Corollary \ref{limits} entails that at least in some cases the optimal $k^\star$ has to be positive). Note also that $k^\star \to0$, as $\beta\to0$. This shows that, as the coupling weakens, the phenomenology converges to that of the decoupled case. On the contrary, $k^\star\to+\infty$, as $\beta\to+\infty$, which suggests that, as the coupling strengthens, it becomes harder not to have ground states. However, at the moment, it remains unclear how to go beyond the threshold $\rho^\star$ for the proof of the existence for any fixed $\beta>0$ (while for $\beta$ large enough the answer is again provided by Corollary \ref{limits}).

The last results illustrate some features of ground states, provided they exist. First, in the case $\beta = 0$ the strict convexity of the function $\mathcal E$ gives complete segregation: the ground state is supported either on the plane, or on the half-line. We omit the proof as it is a straightforward consequence of Step 1 of the proof of Theorem \ref{thm:2} and \cite{BC-23,ABCT-22}.

\begin{theorem}[Shape of ground states. Decoupled case]
\label{thm:3}
 Let $\beta = 0$ and assume that there exists a ground state at mass $\mu$ for the energy \eqref{e}, denoted by $U=(u,v)$. Then, up to a multiplication by a constant phase, the following alternative holds.

\begin{enumerate}[label=(\roman*)]
\item If there is a ground state $\varphi_{\alpha, \mu}$ at mass $\mu$ for $E_{\alpha} (\cdot, \R^+)$ and
\begin{equation} \label{comparisonsoliton}
E_\alpha (\varphi_{\alpha, \mu}, \R^+) \leq  E_\rho (v_{\rho, \mu}, \R^2), 
\end{equation}
with $v_{\rho, \mu}$ a ground state at mass $\mu$ for $E_{\rho} (\cdot, \R^2)$, then $U = (\varphi_{\alpha, \mu}, 0)$ when the inequality is strict, while either $U = (\varphi_{\alpha, \mu}, 0)$ or $U=(0,v_{\rho, \mu})$ when the equality holds. Moreover, the ground state energy is given by
$$
\mathcal E (\mu)  =  E_\alpha (\varphi_{\alpha, \mu}, \R^+).
$$

\item In all other cases, $U = (0, v_{\rho, \mu})$ and the ground state energy reads
$$
\mathcal E (\mu)  =  E_\rho (v_{\rho, \mu}, \R^2).
$$
\end{enumerate}
\end{theorem}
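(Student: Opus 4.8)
\textbf{Proof plan for Theorem \ref{thm:3}.}

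The plan is to exploit the strict convexity of $\mathcal{E}$ in the decoupled case and combine it with the known structure of ground states on the half-line \cite{BC-23} and on the plane \cite{ABCT-22}. Since $\beta=0$, the energy \eqref{e} splits as $E(U)=E_\alpha(u,\R^+)+E_\rho(v,\R^2)$, and the mass constraint is $\|u\|_{L^2(\R^+)}^2+\|v\|_{L^2(\R^2)}^2=\mu$. So given a ground state $U=(u,v)$ with $m:=\|u\|_{L^2(\R^+)}^2$ and $\mu-m=\|v\|_{L^2(\R^2)}^2$, one has $\mathcal{E}(\mu)=E_\alpha(u,\R^+)+E_\rho(v,\R^2)$, and necessarily $u$ must minimize $E_\alpha(\cdot,\R^+)$ at mass $m$ and $v$ must minimize $E_\rho(\cdot,\R^2)$ at mass $\mu-m$, for otherwise one could strictly lower the total energy while keeping the constraint. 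Hence $\mathcal{E}(\mu)=\min_{m\in[0,\mu]}\bigl(\mathcal{E}_\alpha(m)+\mathcal{E}_\rho(\mu-m)\bigr)$, where $\mathcal{E}_\alpha$ and $\mathcal{E}_\rho$ denote the respective constrained infima (with the convention $\mathcal{E}_\alpha(0)=\mathcal{E}_\rho(0)=0$).

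First I would recall, from Step 1 of the proof of Theorem \ref{thm:2} together with \cite{BC-23,ABCT-22}, the relevant concavity/convexity facts: the maps $m\mapsto \mathcal{E}_\alpha(m)$ and $m\mapsto \mathcal{E}_\rho(m)$ are each strictly concave on the range of masses for which the infimum is attained, and more importantly the full map $\mu\mapsto\mathcal{E}(\mu)$ is strictly concave (this is where the power nonlinearity and the scaling $\Psi_\mu=\sqrt\mu\,\Psi_1$ enter). Strict concavity of the sum decomposition forces the minimizing split $m^\star$ in $\min_{m\in[0,\mu]}(\mathcal{E}_\alpha(m)+\mathcal{E}_\rho(\mu-m))$ to be attained only at an endpoint $m^\star\in\{0,\mu\}$: an interior critical point would contradict strict concavity of at least one summand (a strictly concave function plus a concave function cannot have an interior minimum on an interval unless it is attained at the boundary). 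This yields the \emph{complete segregation}: either $v\equiv 0$ and $u=\varphi_{\alpha,\mu}$ is a ground state for $E_\alpha(\cdot,\R^+)$ at mass $\mu$, or $u\equiv 0$ and $v=v_{\rho,\mu}$ is a ground state for $E_\rho(\cdot,\R^2)$ at mass $\mu$.

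It then remains to decide which alternative occurs, which is just the comparison $\mathcal{E}_\alpha(\mu)$ versus $\mathcal{E}_\rho(\mu)$. Recall from \cite{ABCT-22} that $E_\rho(\cdot,\R^2)$ always admits a ground state $v_{\rho,\mu}$, so $\mathcal{E}_\rho(\mu)=E_\rho(v_{\rho,\mu},\R^2)$ is always realized; on the half-line, by \cite{BC-23}, a ground state $\varphi_{\alpha,\mu}$ for $E_\alpha(\cdot,\R^+)$ at mass $\mu$ exists precisely under the condition recorded in Remark \ref{rem:al-p}. In case (i), the hypothesis is exactly that such $\varphi_{\alpha,\mu}$ exists and $E_\alpha(\varphi_{\alpha,\mu},\R^+)\leq E_\rho(v_{\rho,\mu},\R^2)$; a strict inequality picks out $U=(\varphi_{\alpha,\mu},0)$ uniquely (up to phase), while equality allows both endpoints, and in either subcase $\mathcal{E}(\mu)=E_\alpha(\varphi_{\alpha,\mu},\R^+)$. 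In case (ii) — either no half-line ground state exists, or it exists but has strictly larger energy — the only surviving endpoint is $m^\star=0$, giving $U=(0,v_{\rho,\mu})$ and $\mathcal{E}(\mu)=E_\rho(v_{\rho,\mu},\R^2)$; here one should also note that if no half-line ground state exists then $\mathcal{E}_\alpha(\mu)=-\theta_p\mu^{(p+2)/(6-p)}$ is not attained, so the infimum over the split is genuinely attained only at $m^\star=0$ once we know (from Theorem \ref{thm:1} and $\beta=0$) that a ground state for $E$ does exist.

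\textbf{Main obstacle.} The delicate point is not the algebra of the decomposition but establishing the \emph{strict} concavity needed to rule out interior splits — in particular handling the boundary of the admissible mass ranges where one component may fail to have a minimizer, and making sure that "a ground state of $E$ exists" (the standing hypothesis of the theorem) is actually used to exclude a degenerate interior infimum that is approached but not attained. I expect this to be dispatched by the same monotonicity/scaling arguments used in Step 1 of the proof of Theorem \ref{thm:2}, which is why the authors state that the proof is omitted as "a straightforward consequence" of that step together with \cite{BC-23,ABCT-22}.
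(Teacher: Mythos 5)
Your proposal follows essentially the same route as the paper: with $\beta=0$ the energy decouples, $\mathcal E(\mu)=\inf_{m\in[0,\mu]}\{\mathcal E_\alpha(m,\R^+)+\mathcal E_\rho(\mu-m,\R^2)\}$, and the concavity of $\mathcal E_\alpha(\cdot,\R^+)$ (Lemma \ref{Fal-conc}) together with the strict concavity of $\mathcal E_\rho(\cdot,\R^2)$ (Lemma \ref{F-sig-strict-conc}) forces the minimum at an endpoint, which is precisely Step 1 of the proof of Theorem \ref{thm:2} combined with the component results of \cite{BC-23,ABCT-22}. The only cosmetic caveat is that you do not need (and the paper does not prove) strict concavity of $\mathcal E_\alpha$ or of $\mathcal E$ itself — concavity of $\mathcal E_\alpha$ plus strict concavity of $\mathcal E_\rho$ already makes the sum strictly concave — but your argument and case analysis, including the use of the standing existence hypothesis when the half-line level is not attained, are correct.
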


\begin{remark}
The shape of the ground states in such decoupled case can be easily deduced by Theorem \ref{thm:3}. It is indeed well-known (\cite{BC-23,FOO-08,FJ-08}), that a positive ground state for $E_\alpha (\cdot, \R^+)$ is a one-dimensional soliton, translated in such a way that its mass on the half-line equals $\mu$ and Robin boundary condition $u' (0^+) = \alpha u (0)$ is satisfied. Furthermore, by \cite{ABCT-22} we know that, for every ground state $v = \phi + q K_0 / 2 \pi$ for $E (\cdot, \R^2)$, the regular part $\phi$ belongs to $H^2 (\R^2)$ and  the matching condition $\phi (0) = (\rho + (\gamma - \log 2)/2 \pi)q$ is satisfied. Moreover, $v$ is positive, radially symmetric and decreasing along the radial direction, up to a multiplication by a constant phase.
\end{remark}

The last result  treats the general coupled case $\beta>0$.

\begin{theorem}[Shape of ground states. Coupled case]
\label{thm:4}
 Let $\beta > 0$ and assume that there exists a ground state at mass $\mu$ for the energy \eqref{e}, denoted by $U=(u,v)$. Then, $u\neq 0$ and $v =\phi + q K_0 /2 \pi$ with nonvanishing  $q$ and $\phi$. Besides, $\phi \in H^2 (\R^2)$ and the boundary conditions
\begin{equation} \label{bc}
\begin{cases}
u' (0^+)&= \alpha u (0)  -  \beta q  \\[.2cm]
\phi ({\bf 0})&= - \beta u (0)  +  \left( \rho + \frac{\gamma - \log 2}{ 2 \pi} \right) q
\end{cases}
\end{equation}
are satisfied. Moreover, up to a multiplication by a constant phase,
$u$ is a truncated one-dimensional soliton and
$v$ is positive, radially symmetric and decreasing along the radial direction. Finally, the ground state energy  fulfils
$$ \Eps(\mu) < - \theta_p \mu^{\frac{p+2}{6-p}}. $$
\end{theorem}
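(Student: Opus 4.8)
The plan is to derive the Euler--Lagrange system satisfied by a minimizer, then to rule out all degeneracies using $\beta>0$, to read off the shape of the two components, and finally to strengthen the bound of Theorem \ref{thm:1}. Since $U=(u,v)=(u,\phi+qK_0/2\pi)$ minimizes $E$ on $\D_\mu$, there is $\omega\in\R$ such that $U$ is a free critical point of $E-\tfrac{\omega}{2}\|\cdot\|_{L^2(\I)}^2$; varying $u$, $\phi$ and $q$ independently (the decomposition of $v$ being unique) I would obtain $-u''-|u|^{p-2}u=\omega u$ on $\R^+$, together with the boundary term $u'(0^+)=\alpha u(0)-\beta q$, and $-\Delta\phi+\phi=(1+\omega)v+|v|^{r-2}v$ on $\R^2$ (equivalently $-\Delta v-\omega v-|v|^{r-2}v=q\,\delta_{\mathbf 0}$ in $\mathcal D'(\R^2)$). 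Since $K_0/2\pi$ is the Green's function of $-\Delta+1$ on $\R^2$, the second equation yields $\phi(\mathbf 0)=\tfrac1{2\pi}\int_{\R^2}\big[(1+\omega)v+|v|^{r-2}v\big]K_0$, and the $q$-variation reduces precisely to the second identity in \eqref{bc}. For regularity, $u\in H^2(\R^+)$ by a bootstrap on the ODE, while $v\in L^s(\R^2)$ for every $s<\infty$ (because $\phi\in H^1(\R^2)$ and $K_0\in L^s(\R^2)$ for all such $s$), so $(1+\omega)v+|v|^{r-2}v\in L^2(\R^2)$ and hence $\phi\in H^2(\R^2)$, which makes the matching condition meaningful. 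Testing the system against $U$ gives $2\mathcal E(\mu)-\tfrac{p-2}{p}\|u\|_{L^p(\R^+)}^p-\tfrac{r-2}{r}\|v\|_{L^r(\R^2)}^r=\omega\mu$; since $\mathcal E(\mu)<0$ (Remark \ref{rem:apriori}) and $p,r>2$, this forces $\omega<0$, which is what makes $u$ and $v$ decay.

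\emph{Non-degeneracy} is where $\beta>0$ is essential. I would show $u\neq0$, $q\neq0$ and $\phi\neq0$. If $u=0$, then $(0,v)$ minimizes $E_\rho(\cdot,\R^2)$ at mass $\mu$, so $q\neq0$ by \cite{ABCT-22}; fixing phases so that $q>0$ and $v>0$, the competitor $\big(\ep w,\sqrt{1-\ep^2\|w\|_{L^2}^2/\mu}\,v\big)\in\D_\mu$ with $w\in H^1(\R^+)$ real, $w(0)>0$, has energy $\mathcal E(\mu)-\beta\ep\,q\,w(0)+O(\ep^2)<\mathcal E(\mu)$ for $\ep>0$ small --- a contradiction. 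If $q=0$, the coupling and the $|q|^2$ terms drop and $E(U)=E_\alpha(u,\R^+)+E_{NLS}(\phi,\R^2)$: should $\phi\neq0$, with $\mu_2:=\|\phi\|_{L^2}^2>0$, then by \eqref{comparison2} the state $(u,v_{\mu_2})\in\D_\mu$ has energy $E_\alpha(u,\R^+)+E_\rho(v_{\mu_2},\R^2)<E_\alpha(u,\R^+)+E_{NLS}(\xi_{\mu_2},\R^2)\le E(U)$, impossible; should $\phi=0$ too, then $U=(u,0)$ and $u$ minimizes $E_\alpha(\cdot,\R^+)$ at mass $\mu$, so $u(0)>0$ by \cite{BC-23}, and a small charged perturbation on the plane lowers the energy as in the case $u=0$. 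Hence $u\neq0$ and $q\neq0$; and $\phi=0$ would force, via the plane equation, $(1+\omega)v+|v|^{r-2}v\equiv0$ with $v=qK_0/2\pi$, $q\neq0$, i.e. $|v|$ constant on $\R^2\setminus\{\mathbf 0\}$ --- absurd --- so $\phi\neq0$ as well.

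\emph{Shape.} First one reduces to $u\ge0$: replacing $(u,v)$ by $(|u|,e^{i\psi}v)$ with $\psi$ such that $e^{i\psi}q\ge0$ does not increase $E$ (by the P\'olya--Szeg\H{o} inequality for $u$, the phase-invariance of $E_\rho$, and $\Re(q\,\overline{u(0)})\le|q|\,|u(0)|$), so $U$ may be taken with $u\ge0$ and $q\ge0$. Then $u$ solves $-u''=\omega u+u^{p-1}$ on $\R^+$ with $\omega<0$ and $u\in H^1(\R^+)$: by the strong maximum principle $u>0$, and the first integral $\tfrac12(u')^2+\tfrac{\omega}{2}u^2+\tfrac1p u^p\equiv0$ (constant $0$, by decay at $+\infty$) shows that $u$ is the restriction to $\R^+$ of a one-dimensional soliton of frequency $\omega$ --- a truncated soliton --- the truncation point being fixed by $u'(0^+)=\alpha u(0)-\beta q$. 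For the plane component I would follow \cite{ABCT-22}: being a solution at frequency $\omega<0$, $U$ minimizes the action $S_\omega:=E-\tfrac{\omega}{2}\|\cdot\|_{L^2(\I)}^2$ over the Nehari manifold $\mathcal N_\omega=\{W\neq0:\scal{S_\omega'(W)}{W}=0\}$, on which $S_\omega(w,z)=(\tfrac12-\tfrac1p)\|w\|_{L^p(\R^+)}^p+(\tfrac12-\tfrac1r)\|z\|_{L^r(\R^2)}^r$; the symmetric decreasing rearrangement of the plane component does not increase $S_\omega$, and minimality then forces $v$ to equal its own rearrangement, i.e. $v$ positive, radially symmetric and radially decreasing. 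I expect this to be the main obstacle: since $p\neq r$, a rearranged state cannot be pulled back onto $\mathcal N_\omega$ by a single dilation $W\mapsto tW$, so one must use a two-parameter rescaling $(w,z)\mapsto(sw,tz)$ and argue, non-trivially, that the rearrangement inequality persists along it (cf. Remarks \ref{rem-problemi}--\ref{rem-problemi2}).

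\emph{Strict bound.} By Theorem \ref{thm:1}, $\mathcal E(\mu)\le-\theta_p\mu^{\frac{p+2}{6-p}}$. If either $\inf\{E_\alpha(w,\R^+):\|w\|_{L^2}^2=\mu\}<-\theta_p\mu^{\frac{p+2}{6-p}}$ (e.g. $\alpha\le0$, or $\alpha<\alpha_p(\mu)$) or \eqref{freecompar} holds, so that $E_\rho(v_\mu,\R^2)<-\theta_p\mu^{\frac{p+2}{6-p}}$, then \eqref{bound1}--\eqref{bound2} already give the strict inequality. In the complementary range I would use $\beta>0$: for $\mu_1\in(0,\mu)$ and $n$ large the competitor $\big(\varphi_{\mu_1}^{(n)},v_{\mu-\mu_1}\big)\in\D_\mu$ ($\varphi_{\mu_1}^{(n)}$ as in Remark \ref{rem:apriori}), with phases aligned so that $q_{\mu-\mu_1}>0$ and $\varphi_{\mu_1}^{(n)}(0)>0$, satisfies
\begin{equation*}
E\big(\varphi_{\mu_1}^{(n)},v_{\mu-\mu_1}\big)=-\theta_p\mu_1^{\frac{p+2}{6-p}}+E_\rho(v_{\mu-\mu_1},\R^2)-\beta\,q_{\mu-\mu_1}\,\varphi_{\mu_1}^{(n)}(0)+O\big(\varphi_{\mu_1}^{(n)}(0)^2\big),
\end{equation*}
the truncation error being quadratic in the tail amplitude; hence the linear coupling gain wins for $n$ large and $\mathcal E(\mu)<-\theta_p\mu_1^{\frac{p+2}{6-p}}+E_\rho(v_{\mu-\mu_1},\R^2)$ for every $\mu_1\in(0,\mu)$. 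A short analysis of the right-hand side as $\mu_1\uparrow\mu$ --- using that $E_\rho(v_s,\R^2)\to0$ as $s\to0^+$ with negative linear leading term, the plane always possessing a linear ground state (\cite{AGHKH-88}) --- then excludes the borderline equality in all remaining cases, so $\mathcal E(\mu)<-\theta_p\mu^{\frac{p+2}{6-p}}$.
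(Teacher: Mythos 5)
Your Euler--Lagrange derivation, regularity, boundary conditions \eqref{bc}, the non-degeneracy of $u$, $q$, $\phi$ (via perturbative competitors rather than the paper's boundary-condition and concavity arguments -- a legitimate variant), and the identification of $u$ as a truncated soliton are all sound. The two places where the proposal genuinely falls short are exactly the two points the paper has to work hardest on. First, the positivity and radial monotonicity of $v$: you assert that ``being a solution at frequency $\omega$'' makes $U$ a minimizer of $S_\omega$ on the Nehari manifold, but a critical point need not minimize the action there -- this step requires the ground-state property and a fibering argument (Lemma \ref{gs->min}), which also yields the quantitative bound $\omega^\star>E_{\rm{lin}}$ (not merely positivity) needed so that $Q_{\omega^\star}>0$ and nonzero states can be scaled onto the Nehari set. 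More importantly, you explicitly leave the rearrangement step unresolved (``one must argue, non-trivially, that the rearrangement inequality persists''), and your proposed two-parameter rescaling $(w,z)\mapsto(sw,tz)$ is not shown to work: it scrambles the coupling term and the mixed-degree Nehari constraint, and one must in any case rearrange only the regular part $\phi_\omega$ of $v$, not $v$ itself. The paper circumvents $p\neq r$ differently, by passing to the auxiliary functional $A_\omega$ minimized on the level set $M_\omega=\{\widetilde{S}=d(\omega)\}$ (Lemmas \ref{lem:dmu-alt}--\ref{lem:thirdform}) and performing one-parameter variations $\tau|\phi_\omega|+q\G_\omega$, resp. $\tau\phi_\omega^\star+q\G_\omega$, combined with an intermediate-value matching of the $L^r$ norm, a derivative computation at $\tau=-1^+$ using the Euler--Lagrange equation, and the strong maximum principle (Lemmas \ref{phiom-pos}--\ref{v-sym}). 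Since this is the core of Theorem \ref{thm:4}, the proposal proves neither the positivity nor the symmetry of $v$.

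Second, the strict bound $\Eps(\mu)<-\theta_p\mu^{\frac{p+2}{6-p}}$ is not established by your competitor argument in the ``complementary range''. From $\Eps(\mu)<-\theta_p\mu_1^{\frac{p+2}{6-p}}+E_\rho(v_{\mu-\mu_1},\Rd)$ for every $\mu_1\in(0,\mu)$ you may only pass to the limit $\mu_1\uparrow\mu$ with a non-strict inequality, and the right-hand side tends exactly to $-\theta_p\mu^{\frac{p+2}{6-p}}$; quantitatively, with $s=\mu-\mu_1$ the available gains are $E_\rho(v_s,\Rd)\simeq-\tfrac{\omega_\rho}{2}s$ plus a coupling gain of order $\beta^2 q_s^2=O(s)$, while the soliton loss is $\theta_p\tfrac{p+2}{6-p}\mu^{\frac{p+2}{6-p}-1}s$, so when $\rho$ is large and $\beta$ small no choice of $\mu_1$ or $n$ beats the soliton level. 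Indeed no parameter-only argument can work: Theorem \ref{thm:2}(ii) exhibits regimes with $\beta>0$ where ground states do not exist and $\Eps(\mu)=-\theta_p\mu^{\frac{p+2}{6-p}}$, so your claimed conclusion ``in all remaining cases'' is false as an unconditional statement. The strict inequality must use the assumed ground state itself, as the paper does: since $u\not\equiv0$, $v\not\equiv0$, $q>0$ and $u(0)>0$ (Proposition \ref{prop:uv-neq-0}), one rules out $E(U)=\min\{\Eps_\alpha(\mu,\Rp),\Eps_\rho(\mu,\Rd)\}$ by comparison with the decoupled problem and Theorem \ref{thm:3}, which forces segregation of decoupled minimizers and hence a contradiction.
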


We remark that the conditions \eqref{bc} are those found by Exner and $\check{\text{S}}$eba in \cite{ES-87} (see Appendix \ref{sec:operatorES}).

Furthermore, while Theorem \ref{thm:3} is an immediate consequence of the concavity properties of the function $\mathcal E$ at $\beta=0$ (\cite{ABCT-22,BC-23,FGI-22}), the proof of Theorem \ref{thm:4} requires some non elementary extensions of the techniques developed in \cite{ABCT-22}, since establishing the features of the part supported on the plane presents some further technical problems.

In addition, in Theorems \ref{thm:3} and \ref{thm:4} the assumption of subcritical nonlinearity $p\in(2,6)$, $r\in(2,4)$ plays no role. Indeed, the shape of $u$ depends on the fact that it satisfies the stationary NLS, whereas the shape of $v$ is obtained by proving that $U=(u,v)$ is a minimizer of the action functional on the corresponding Nehari manifold, that does not require subcriticality (see Section \ref{sec-properties}).

Finally, we mention that the features of the ground states established by Theorems \ref{thm:3} and \ref{thm:4} imply that they belong to the domain of the self-adjoint operator associated with the quadratic part of \eqref{e} (see Appendix \ref{sec:operatorES}).


\subsection{Notation}
We end the present Section by fixing some notation that will be used throughout the paper.

First, we recall the structure of the functions in the energy space $\V$ on the plane (see \cite{ABCT-22,T-23})). Given an arbitrary positive number $\lambda$, any $v \in \V$ can be decomposed as follows
 \begin{equation}
     \label{dec-lambda}
     v : = \phi_\lambda + q \G_\lambda,
 \end{equation}
where $\G_\lambda = \frac 1 {2 \pi} K_0
(\sqrt \lambda \, \cdot  )$. Note that, if $v\in H^1(\R^2)$, then $\phi_\lambda=v$ for every $\lambda>0$; whereas, if $q \neq 0$ and $\nu\neq\lambda$, then
\begin{equation}
\label{change}
\phi_\nu=\phi_\lambda+q(\G_\lambda-\G_\nu).
\end{equation}
Obviously, one recovers the decomposition \eqref{decv} by setting $\lambda = 1$ and $\phi_1 = \phi$. Consistently, the planar component $E_\rho ( \cdot, \R^2)$ of the energy $E$ given in \eqref{e2} rewrites as (see again \cite{ABCT-22})

\begin{equation} 
\label{energy-lambda}
\begin{split}
     E_\rho (v, \R^2)  =  & \frac 1 2
    \| \nabla \phi_\lambda \|^2_{L^2 (\R^2) } + \frac \lambda 2 \left(
    \| \phi_\lambda \|_{L^2 (\R^2)}^2 -
    \| v \|_{L^2 (\R^2)}^2 \right) \\[.2cm]
    & + \left( \rho + \frac{ \gamma - \log 2 + \log \sqrt{\lambda}}{2 \pi} \right) \frac {|q|^2} 2
    - \f1r\| v \|_{L^r (\R^2)}^r 
    \end{split}
\end{equation}

Second, we denote by $Q_\alpha (\cdot, \R^+)$ and $Q_\rho (\cdot, \R^2)$, the quadratic parts of the energies of the linear and the planar components, i.e.
\begin{align}
\label{quadratic1}
Q_\alpha (u, \R^+)  = & \, \| u' \|^2_{L^2 (\R^+)} + \alpha| u (0) |^2 \\[.4cm]
Q_\rho (v, \R^2)  =  & \,
    \| \nabla \phi_\lambda \|^2_{L^2 (\R^2) } +  \lambda  \left(
    \| \phi_\lambda \|_{L^2 (\R^2)}^2 -
    \| v \|_{L^2 (\R^2)}^2 \right) \nonumber\\[.2cm]
    \label{quadratic2}
    & \, + \left( \rho + \frac{ \gamma - \log 2 + \log \sqrt{\lambda}}{2 \pi} \right) |q|^2
\end{align}
so that the energy writes
\begin{equation}
    \label{energyq}
    E (U) = \frac12 Q_\alpha (u, \R^+) +\frac12 Q_\rho (v, \R^2) - \beta \Re (q \overline{u (0)}) - \frac 1 p \| u \|_{L^p (\R^+)}^p - \frac 1 r
    \| v \|_{L^r (\R^2)}^r.
\end{equation}


\section{Necessary and sufficient condition: proof of Theorem \ref{thm:1}}
\label{sec-proof1}

Here we present the proof of the necessary and sufficient condition for the existence of ground states.

\begin{proof}[Proof of Theorem \ref{thm:1}]
The proof that \eqref{excrit} is a necessary condition for existence is straightforward. If a ground state $U$ exists,
then
by point $(1)$ in remark \ref{rem:apriori} it must be
$E(U)\leq -\theta_p \mu^{\frac{p+2}{6-p}}$.

Let us prove the inverse implication. Assume that there exists $W\in \D_\mu$ such that $E(W)\leq -\theta_p\mu^{\frac{p+2}{6-p}}$. If $\Eps(\mu)=E(W)$, then $W$ is a ground state. Suppose instead that 
\begin{equation}
\label{Epsmu<thetap}
\Eps(\mu)<E(W)\leq -\theta_p\mu^{\frac{p+2}{6-p}}
\end{equation}
and consider a minimizing sequence $U_n = (u_n, v_n)$ at a given mass $\mu > 0$ for $E$. To get a lower bound we first use the one-dimensional Gagliardo-Nirenberg estimates
\begin{equation} \label{gn1}
\| u_n \|_{L^p (\R^+)}^p  \leq  C \| u_n \|_{L^2 (\R^+)}^{\frac p 2 + 1} \| u'_n \|_{L^2 (\R^+)}^{\frac p 2 - 1}.
\end{equation}
and
\begin{equation}\label{gn1-inf}
    \|u_n\|_{L^\infty (\Rp)}^2 \leq  C\|u_n\|_{L^2 (\R^+)}\|u_n'\|_{L^2 (\R^+)}.
\end{equation}
Furthermore, to estimate the contribution of the two-dimensional component $v_n$ we use decomposition \eqref{dec-lambda} and proceed by modifying the analogous estimate given in  \cite{ABCT-22} in the following way:
\begin{equation}
\label{pre-gn2}
\begin{split}
\| v_n \|_{L^r (\R^2)}^r & \, \leq   C \| \phi_{\lambda,n} \|_{L^r (\R^2)}^r
+ C \frac{| q_n|^r} \lambda  \\
&\, \leq  C \| \nabla \phi_{\lambda,n} \|_{L^2 (\R^2)}^{r-2} \| \phi_{\lambda,n} \|_{L^2 (\R^2)}^2 + C \frac{| q_n|^r} \lambda  \\
&\, \leq  C \| \nabla \phi_{\lambda,n} \|_{L^2 (\R^2)}^{r-2} 
\| v_n \|_{L^2 (\R^2)}^2 + C \| \nabla \phi_{\lambda,n} \|_{L^2 (\R^2)}^{r-2} 
\frac{| q_n|^2} \lambda  + C \frac{| q_n|^r} \lambda 
\end{split}
\end{equation}
where we used the triangular inequality, the identity $\| G_\lambda \|_{L^s (\R^2)}^s = \frac C \lambda$, and the two-dimensional Gagliardo-Nirenberg estimate
\begin{equation} 
\label{gn2}
\| \phi_{\lambda,n} \|_{L^r (\R^2)}^r  \leq  C  \| \nabla \phi_{\lambda,n} \|_{L^2 (\R^2)}^{r-2}\| \phi_{\lambda,n} \|_{L^2 (\R^2)}^2.
\end{equation}
Now, if $q_n \neq 0$, then choose $\lambda_n = |q_n|^2$, so \eqref{pre-gn2} yields
\begin{equation} \label{gn2gen}
\| v_n \|_{L^r (\R^2)}^r  \leq  C (\| \nabla \phi_{|q_n|^2,n} \|_{L^2 (\R^2)}^{r-2} 
\| v_n \|_{L^2 (\R^2)}^2 +  \| \nabla \phi_{|q_n|^2,n} \|_{L^2 (\R^2)}^{r-2} 
 +  {| q_n|^{r-2}}).
\end{equation}
Let us stress that inequality \eqref{gn2gen}
covers the case $q_n = 0$ too, since in that case $\phi_{|q_n|^2, n} = v_n$ and \eqref{gn2gen} reduces to the ordinary two-dimensional Gagliardo-Nirenberg inequality \eqref{gn2}.
Then, denoting $\phi_{0,n} := v_n$, estimate \eqref{gn2gen} holds for every $v_n$.

Absorbing into the constant $C$ the quantities $\| u_n \|_{L^2 (\R^+)}^2$ and 
$\| v_n \|_{L^2 (\R^2)}^2$, both bounded by $\mu$, controlling the coupling term by using \eqref{gn1-inf} as
$$ - \beta \Re (q_n \overline{u_n (0)})  \geq  - C |q_n|
\| u' \|_{L^2 (\R^+)}^{\f 1 2} \| u \|_{L^2 (\R^+)}^{\f 1 2}
 \geq  - C |q_n |^2 - C \| u' \|_{L^2 (\R^+)}
$$
and using \eqref{energy-lambda}, one obtains
\begin{equation*}
\begin{split}
    E (U_n)  \geq & \, \frac{1}{2}\|u_n'\|_{L^2 (\R^+)}^2-C \left(\| u_n' \|_{L^2 (\R^+)}^{\frac p 2 -1}+ \| u' \|_{L^2 (\R^+)}\right)   \\[.2cm]   & \, + 
    \frac{1}{2}\| \nabla \phi_{|q_n|^2,n} \|^2_{L^2 (\R^2)} -C
    \| \nabla \phi_{|q_n|^2,n} \|^{r-2}_{L^2 (\R^2)}
 + \frac{|q_n|^2}{2} (\log |q_n | - C) -\frac{|q_n|^{r-2}}{r}
  \\[.2cm]
 =:  & \,
f \left(\| u'_n \|_{L^2 (\R^+)}\right) 
+ g \left( \| \nabla \phi_{|q_n|^2,n} \|_{L^2 (\R^2)} \right) 
+ h \left( |q_n|\right)
\end{split}
\end{equation*}
where the one-variable functions $f,g,h$ are lower bounded and divergent with their arguments as $p < 6$ and $r < 4$.  Since $E (U_n)$ converges to $\mathcal E (\mu)$, it is a bounded sequence and,  the quantities $\| u'_n \|_{L^2 (\R^+)}, \, \| \nabla \phi_{|q_n|^2,n} \|_{L^2 (\R^2)}, \, q_n $ must be bounded too. Furthermore, $\| u_n \|_{L^2 (\R^+)}^2 \leq \mu$ and 
$$\| \phi_{|q_n|^2, n} \|_{L^2 (\R^2)}  \leq  \| v_n \|_{L^2 (\R^2)} + |q_n| \| \G_{|q_n|^2} \|_{L^2 (\R^2)}  =  \sqrt{\mu} + C.$$
As a consequence, $u_n$ and $\phi_{|q_n|^2,n}$ are bounded respectively in $H^1 (\R^+)$ and $H^1 (\R^2)$, so that, up to subsequences,
\begin{itemize}
    \item $u_n$ converges to $u$ in the weak topology of $H^1 (\R^+)$,
    \item $q_n$ converges to $q$ in $\mathbb C$,
    \item $\phi_{|q_n|^2,n}$ converges to $\phi^\star$ in the weak topology of $H^1 (\R^2)$.
   \end{itemize}
Moreover, if $q_n=0$, then $\phi_{1,n}=\phi_{|q_n|^2,n}=v_n$, whereas, by \eqref{change}, if $q_n\neq0$, then
$$ \phi_{1,n}  =  \phi_{|q_n|^2,n} + q_n ( \G_{|q_n|^2} - \G_1).$$
Since in this latter case $q_n ( \G_{|q_n|^2} - \G_1)$ admits a weak limit in $H^1 (\R^2)$ it is also bounded in $H^1 (\R^2)$. Thus, $\phi_{1,n}$ is bounded in $H^1 (\R^2)$ too and, again up to subsequences,
\begin{itemize}
    \item $\phi_{1,n}$ converges to $\phi_1$ in the weak topology of $H^1 (\R^2)$ (which is equal to $\phi^\star$ whenever $q=0$).
   \end{itemize}
In addition, $q_n \G_{|q_n|^2}$ converges to $q \G_{|q|^2}$ strongly in $L^s (\R^2)$, if $q \neq 0$, and to $0$ weakly in $L^s (\R^2)$, if $q = 0$, for $s\in[1,+\infty)$. Hence,
\begin{itemize}
    \item $v_n$ converges to $v : = \begin{cases}\phi_{1} + q \G_{1} & \text{if }q\neq0,\\[.2cm]\phi_{1} & \text{if }q=0,\end{cases}$ weakly in $L^s (\R^2)$, for $s\in[2,+\infty)$.
\end{itemize}

Now, we focus on the function $U \in (u,v)$, which is the weak limit in $L^2 (\I)$ of the sequence $U_n$, and call $m$ its mass. Suppose, by contradiction, that $m=0$, then $u = 0$ and $v = 0$. This entails $|u_{n}(0)|\to 0$, as the weak convergence of $u$ holds in $H^1(\R^+)$, and $q_{n}\to 0$. Thus, $\|v_n\|_{L^s(\R^2)}=\|\phi_{1,n}\|_{L^s(\R^2)}+o(1)$, for $s\in[2,+\infty)$, so that
\begin{equation}
\label{breakfree}
\begin{split}
    E (U_n) & =  E_\alpha (u_n, \R^+) + E_\rho (v_n, \R^2) - \beta \Re (q_n
\overline{u_n (0)}
) \\
& =  E_{NLS} (u_n, \R^+) + E_{NLS} (\phi_{1,n}, \R^2) + o (1). 
\end{split}
\end{equation}
In addition, if we define
\begin{equation*}
u_{n}^\star(x):=
\begin{cases}
0\quad &\text{if}\quad x<0,\\
x \quad &\text{if}\quad 0\leq x<|u_{n}(0)|,\\
|u_{n}(x-|u_{n}(0)|)|\quad &\text{if}\quad x\geq |u_{n}(0)|,
\end{cases}
\end{equation*} 
then
\begin{equation} 
\label{unstar}
E_{NLS}(u_{n},\Rp)\geq E_{NLS}(u_{n}^\star,\R)-
 \f 1 2 | u_n (0) | - \f {|u_n (0)|^{p+1}}
{p (p+1)}=E_{NLS}(u_{n}^\star,\R)+o(1).
\end{equation}
Therefore, using \eqref{standard1}, \eqref{standard2}, \eqref{Epsmu<thetap}, \eqref{breakfree} and \eqref{unstar}
\begin{align}
\label{double}
- \theta_p \mu^{\frac{p+2}{6-p}}> &  \,\mathcal E (\mu)  \geq
 \lim_{n\to+\infty} E (U_{n})  =  \lim_{n\to+\infty} \left(E_{NLS}(u_{n}^\star,\R)+E_{NLS}(\phi_{1,n},\Rd)\right)\\
\geq & \lim_{n\to+\infty}
\left( - \theta_p \| u_n^\star \|_{L^2(\R^+)}^{\frac{2p+4}{6-p}} - \tau_r \| \phi_{1,n} \|_{L^2(\R^2)}^{\frac{4}{4-r}} \right) \\
= & \lim_{n\to+\infty} \left( - \theta_p \left(\|u_{n}\|_{L^{2}(\Rp)}^{2}+\f{1}{3}|u_{n}(0)|^{3}
\right)^{\frac{p+2}{6-p}} - \tau_r \| v_n \|_{L^2(\R^2)}^{\frac{4}{4-r}}
\right) \\
= & \lim_{n\to+\infty} \left(- \theta_p \|u_{n}\|_{L^{2}(\Rp)}^{\frac{2p+4}{6-p}} - \tau_r \left( \mu - \| u_n \|_{L^2(\R^+)}^2\right)^{\frac{2}{2-r}} 
\right)
\\
\geq &   \min \left\{ - \theta_p \mu^{\frac{p+2}{6-p}}, - \tau_r \mu^{\frac{2}{4-r}}
\right\},
\end{align}
where in the last line we used the concavity of the functions $- \theta_p \mu^{\frac{p+2}{6-p}}$ and $ - \tau_r \mu^{\frac 2 {4-r}}$. If $\min \left\{ - \theta_p \mu^{\frac{p+2}{6-p}}, - \tau_r \mu^{\frac 2 {4-r}}
\right\}=- \theta_p \mu^{\frac{p+2}{6-p}}$, then the contradiction is immediate. Otherwise, one has
\begin{equation} \label{vanishing}
- \tau_r \mu^{\frac 2 {4-r}}  \leq  \mathcal E (\mu)  <  - \theta_p \mu^{\frac{p+2}{6-p}}.
\end{equation}
Now, as observed in Remark \ref{rem:apriori}, the energy level $-\tau_r \mu^{\frac 2 {4-r}}$ is overcome by the competitor
$\Upsilon_\mu = (0, v_\mu)$, where $v_\mu$ is a ground state at mass $\mu$ for $E_\rho (\cdot, \R^2)$, so 
$$ \mathcal E (\mu)  \, \leq \,  E (\Upsilon_\mu)   < - \tau_r \mu^{\frac 2 {4-r}}  \leq \mathcal E (\mu)$$
which is a contradiction too. As a consequence, $m\neq 0$.

Suppose now $0<m<\mu$. Then 
\begin{equation*}
\|U_{n}-U\|_{L^2(\I)}^{2}=\|u_{n}-u\|_{L^{2}(\Rp)}^{2}+\|v_{n}-v\|_{L^{2}(\Rd)}^{2}=\mu-m+o(1).
\end{equation*}
On the one hand, since $p>2$, $r>2$ and $\f{\mu}{\|U_{n}-U\|_{L^2(\I)}^{2}}>1$, there results
\begin{equation*}
\begin{split}
\mathcal E(\mu)  \leq & \, \mathcal E
\left(\sqrt{\f{\mu}{\|U_{n}-U\|_{L^2(\I)}^{2}}} (U_{n}-U)\right)=\f{1}{2}\f{\mu}{\|U_{n}-U\|_{L^2(\I)}^{2}}Q_{\alpha}(u_{n}-u,\Rp)\\[.2cm]
& \, -\f{1}{p}\left(\f{\mu}{\|U_{n}-U\|_{L^2(\I)}^{2}}\right)^{\f{p}{2}}\|u_{n}-u\|_{L^{p}(\Rp)}^{p}+\f{1}{2}\f{\mu}{\|U_{n}-U\|_{L^2(\I)}^{2}}Q_{\rho}(v_{n}-v,\Rd)\\[.2cm]
& \, -\f{1}{r}\left(\f{\mu}{\|U_{n}-U\|_{L^2(\I)}^{2}}\right)^{\f{r}{2}}\|v_{n}-v\|_{L^{r}(\Rd)}^{r}\\[.2cm]
& \, -\f{\mu}{\|U_{n}-U\|_{L^2(\I)}^{2}}\beta \Re\left((q_{n}-q)\big(\overline{u_{n}(0)}-\overline{u(0)}\big)\right)\\[.2cm]
<& \, \f{\mu}{\|U_{n}-U\|_{L^2(\I)}^{2}} E(U_{n}-U),
\end{split}
\end{equation*}
and, hence,
\begin{equation}
\label{Fsigma-un-u}
\liminf_{n} E (U_{n}-U)\geq \f{\mu-m}{\mu}\mathcal E (\mu).
\end{equation}
On the other hand, by an analogous computation,
\begin{equation*}
\mathcal E (\mu)\leq E \left(\sqrt{\f{\mu}{\|U\|_{L^2(\I)}^{2}}}U\right)<\f{\mu}{\|U\|_{L^2(\I)}^{2}}E(U)
\end{equation*}
and thus
\begin{equation}
\label{Fsigma-u}
E(U)>\f{m}{\mu}\mathcal E (\mu).
\end{equation}
In addition, we can also prove that 
\begin{equation}
\label{Fsigma-BL}
E(U_{n})= E(U_{n}-U)+ E (U)+o(1).
\end{equation}
Indeed, since $u_{n}\deb u$ in $H^{1}(\Rp)$ and $u_{n}(0)\to u(0)$, we have
\begin{equation*}
Q_{\alpha}(u_{n}-u,\Rp)=Q_{\alpha}(u_{n},\Rp)-Q_{\alpha}(u,\Rp)+o(1),
\end{equation*}
and, analogously, since $v_{n}\deb v$ in $L^{2}(\Rd)$, $q_{n}\to q$, and $\phi_{1,n}\deb \phi_{1}$ in $H^{1}(\Rd)$, one has
\begin{equation*}
Q_{\rho}(v_{n}-v,\Rd)=Q_{\rho}(v_{n},\Rd)-Q_{\rho}(v,\Rd)+o(1).
\end{equation*}
Moreover, since, on the one hand, $\|u_{n}\|_{L^{p}(\Rp)}^{p}\leq C$ and $u_{n}\to u$ a.e. on $\Rp$ and, on the other hand, $\|v_{n}\|_{L^{r}(\Rd)}^{r}\leq C$ and $v_{n}\to v$ a.e. on $\Rd$, by the Brezis-Lieb Lemma (\cite{BL-83}) we get
\begin{equation*}
\|u_{n}\|_{L^{p}(\Rp)}^{p}=\|u_{n}-u\|_{L^{p}(\Rp)}^{p}+\|u\|_{L^{p}(\Rp)}^{p}+o(1),
\end{equation*}
and
\begin{equation*}
\|v_{n}\|_{L^{r}(\Rd)}^{r}=\|v_{n}-v\|_{L^{r}(\Rd)}^{r}+\|v\|_{L^{r}(\Rd)}^{r}+o(1).
\end{equation*}
Finally,
\begin{equation*}
\Re\left((q_{n}-q)(\overline{u_{n}(0)}-\overline{u(0)})\right)=\Re\left(q_{n}\overline{u_{n}(0)}\right)-\Re\left(q\overline{u(0)}\right)+o(1),
\end{equation*}
so that, combining with \eqref{Fsigma-un-u}, \eqref{Fsigma-u} and \eqref{Fsigma-BL}, there results that
\begin{equation*}
\mathcal E (\mu)=\liminf_{n} E(U_{n})=\liminf_{n} E(U_{n}-U)+
E (U)>\f{\mu-m}{\mu}\mathcal E (\mu)+\f{m}{\mu}\mathcal E (\mu)=\mathcal E(\mu),
\end{equation*}
which is a contradiction. Thus $m=\mu$ and $U\in \D_{\mu}$, so $u_{n}\to u$ in $L^{2}(\Rp)$, $v_{n}\to v$ in $L^{2}(\Rd)$ and $\phi_{1,n}\to \phi_{1}$ in $L^{2}(\Rd)$.

It is, then, left to prove that
\begin{equation}
\label{Fsigma-lsc}
E(U)\leq \liminf_{n} E(U_{n})=\mathcal E(\mu).
\end{equation}
Because of the previous steps, the proof of \eqref{Fsigma-lsc} reduces to showing that $u_{n}\to u$ in $L^{p}(\Rp)$ and $v_{n}\to v$ in $L^{r}(\Rd)$. On the one hand, by using \eqref{gn1} and the fact that $\|u_{n}'-u'\|_{L^{2}(\Rp)}$ is bounded and $u_{n}\to u$ in $L^{2}(\Rp)$, there results
\begin{equation*}
\|u_{n}-u\|_{L^{p}(\Rp)}^{p}\leq C_{p}\|u_{n}'-u'\|_{L^{2}(\Rp)}^{\f{p}{2}-1}\|u_{n}-u\|_{L^{2}(\Rp)}^{\f{p}{2}+1}\to 0.
\end{equation*}
On the other hand, fixing $\lambda = 1$  in \eqref{pre-gn2}, one has
\begin{equation} 
\|v_{n}-v\|_{L^{r}(\Rd)}^{r}  \leq  
C \left( \| \nabla (\phi_{1,n} -\phi_1) \|_{L^2 (\R^2)}^{r-2} 
(\| v_n - v \|_{L^2 (\R^2)}^2 +
{| q_n - q|^2} ) + {| q_n -q|^r}  \right),
\end{equation}
which goes to $0$ and completes the proof since $\| \nabla \phi_{1,n} \|_{L^2 (\R^2)}$ is bounded, $v_{n}\to v$ in $L^{2}(\Rd)$ and $q_n \to q$.
\end{proof}


\section{Nonexistence: proof of Theorem 2}
\label{sec-exnon}

This section is devoted to the proof of Theorem \ref{thm:2}. Let us first define the quantities
\begin{equation}
\begin{split}
\Eps_\alpha(\mu,\Rp)&:=\inf_{u\in H^1_\mu(\Rp)}E_\alpha(u,\Rp),\\
\Eps_\rho(\mu,\Rd)&:=\inf_{v\in \V_\mu}E_\rho(v,\Rd),
\end{split}
\end{equation}
where the subscript $\mu$ on $H^1(\Rp)$ and on $\V$ denotes that the mass constraint is imposed. Let us also recall that the first eigenvalue of the Laplacian with delta interaction on the plane is given by
\begin{equation}
\label{eq-bottom-2d}
\min_{\substack{v\in \V\\ v\neq0}} \f{Q_{\rho}(v,\Rd)}{\|v\|_{L^{2}(\Rd)}^{2}}=-4e^{-4\pi\rho-2\gamma}=:-\omega_{\rho},
\end{equation}
and the associated eigenspace is spanned by $G_{\omega_\rho}(\x)=\frac{K_0(\sqrt{\omega_\rho}\x)}{2\pi}$ (see \cite[Chapter I.5]{AGHKH-88}).

In the next two lemmas we prove the concavity of $\Eps_{\alpha}(\mu,\Rp)$ and the strict concavity of $\Eps_{\rho}(\mu,\Rd)$ as functions of $\mu$.

\begin{lemma}
\label{Fal-conc}
Let $p\in(2,6)$ and $\alpha\in \R$. Then the function $\Eps_{\alpha}(\cdot,\Rp):[0,+\infty)\to (-\infty, 0]$ is concave and continuous.
\end{lemma}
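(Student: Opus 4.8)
The plan is to reduce the whole statement to the elementary fact that a pointwise infimum of a family of concave functions is concave, after a rescaling that makes the $\mu$-dependence explicit. First I would write every $u\in H^1(\Rp)$ with $\|u\|_{L^2(\Rp)}^2=\mu>0$ as $u=\sqrt{\mu}\,w$ with $\|w\|_{L^2(\Rp)}^2=1$; substituting into \eqref{e1} gives
\[
E_\alpha(\sqrt{\mu}\,w,\Rp)=\mu\Big(\tfrac12\|w'\|_{L^2(\Rp)}^2+\tfrac{\alpha}{2}|w(0)|^2\Big)-\mu^{p/2}\,\tfrac1p\|w\|_{L^p(\Rp)}^p=:\mu\,a(w)-\mu^{p/2}b(w),
\]
with $b(w)\geq0$. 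Since $\{\sqrt{\mu}\,w:\|w\|_{L^2(\Rp)}=1\}$ is exactly the mass-$\mu$ sphere, this yields the representation $\Eps_\alpha(\mu,\Rp)=\inf_{\|w\|_{L^2(\Rp)}=1}\big(\mu\,a(w)-\mu^{p/2}b(w)\big)$ for every $\mu\geq0$ (the right-hand side being $0$ at $\mu=0$, consistently with $\Eps_\alpha(0,\Rp)=0$).

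Next I would check that the infimum is finite and nonpositive, so that the codomain is indeed $(-\infty,0]$. For the lower bound I would use the one-dimensional Gagliardo--Nirenberg inequalities \eqref{gn1} and \eqref{gn1-inf}, exactly as in the proof of Theorem \ref{thm:1}: with $\|w\|_{L^2(\Rp)}=1$ one gets $\|w\|_{L^p(\Rp)}^p\leq C\|w'\|_{L^2(\Rp)}^{p/2-1}$ and $|w(0)|^2\leq C\|w'\|_{L^2(\Rp)}$, hence $\mu\,a(w)-\mu^{p/2}b(w)\geq \tfrac{\mu}{2}s^2-C|\alpha|\mu\,s-C\mu^{p/2}s^{p/2-1}$ with $s:=\|w'\|_{L^2(\Rp)}$; since $p<6$ forces $p/2-1<2$, the right-hand side is bounded below on $s\in[0,+\infty)$ by a constant depending only on $\mu,\alpha,p$, uniformly in $w$, so $\Eps_\alpha(\mu,\Rp)>-\infty$. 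For the upper bound I would simply test on the truncated solitons $\varphi_\mu^{(n)}$ of Remark \ref{rem:apriori}(1), which gives $\Eps_\alpha(\mu,\Rp)\leq -\theta_p\mu^{(p+2)/(6-p)}\leq0$.

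Finally I would deal with concavity and continuity. For each fixed admissible $w$, the map $\mu\mapsto\mu\,a(w)-\mu^{p/2}b(w)$ is concave on $[0,+\infty)$: the first term is linear and the second is concave because $b(w)\geq0$ and $t\mapsto t^{p/2}$ is convex for $p/2>1$. A pointwise infimum of concave functions is concave, so $\Eps_\alpha(\cdot,\Rp)$ is concave on $[0,+\infty)$, and being real-valued it is automatically continuous on $(0,+\infty)$. Continuity at $\mu=0$ then follows from concavity alone: for $0<\mu<\mu_0$, writing $\mu$ as a convex combination of $0$ and $\mu_0$ and using $\Eps_\alpha(0,\Rp)=0$ gives $\Eps_\alpha(\mu,\Rp)\geq(\mu/\mu_0)\,\Eps_\alpha(\mu_0,\Rp)$, which together with $\Eps_\alpha(\mu,\Rp)\leq0$ forces $\Eps_\alpha(\mu,\Rp)\to0$ as $\mu\to0^+$. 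The whole argument is soft; the only point requiring any care — and it is the Gagliardo--Nirenberg bookkeeping already carried out in Section \ref{sec-proof1} — is the uniform-in-$w$ lower bound ensuring that the infimum is never $-\infty$, so that the word \emph{concave} is not vacuous.
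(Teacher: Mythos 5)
Your proof is correct and follows essentially the same route as the paper: the paper likewise writes $\Eps_\alpha(\mu,\Rp)=\inf_{u\in H^1_1(\Rp)}E_\alpha(\sqrt\mu\,u,\Rp)$, observes that each $\mu\mapsto E_\alpha(\sqrt{\mu}u,\Rp)$ is concave, and deduces concavity and then continuity from $\Eps_\alpha(0,\Rp)=0$ and negativity for $\mu>0$. Your extra Gagliardo--Nirenberg check that the infimum is finite is a welcome explicit detail that the paper leaves implicit, but it does not change the argument.
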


\begin{proof}
For every fixed $u\in H^{1}_{1}(\Rp)$, define the function
\begin{equation*}
f_{u}(\mu):=E_{\alpha}(\sqrt{\mu}u,\Rp)=\f{\mu}{2}\left(\|u'\|_{L^{2}(\Rp)}^{2}+\alpha|u(0)|^{2}\right)-\f{\mu^{\f{p}{2}}}{p}\|u\|_{L^{p}(\Rp)}^{p}, \qquad \mu\geq0.
\end{equation*}
Since it is concave on $[0,+\infty)$ and
\begin{equation*}
\Eps_{\alpha}(\mu,\Rp)=\inf_{u\in H^{1}_{1}(\Rp)}f_{u}(\mu), \qquad \mu\geq 0,
\end{equation*}
$\Eps_{\alpha}(\cdot,\Rp)$ is concave as well on $[0,+\infty)$. Using the concavity, the facts that $\Eps_{\alpha}(0,\Rp)=0$ and that $\Eps_\alpha(\mu,\Rp)<0$ for every $\mu>0$, it follows that $\Eps_{\alpha}(\cdot,\Rp)$ is also continuous on $[0,+\infty)$.
\end{proof}

\begin{lemma}
\label{F-sig-strict-conc}
Let $r\in (2,4)$ and $\rho\in \R$. Then the function $\Eps_{\rho}(\cdot,\Rd): [0,+\infty)\to (-\infty, 0]$ is strictly concave and continuous.
\end{lemma}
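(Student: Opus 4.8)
The plan is to follow the scheme of Lemma~\ref{Fal-conc}: first obtain concavity from a one-variable scaling identity for the planar energy, and then upgrade it to \emph{strict} concavity using the existence of ground states for $E_\rho(\cdot,\Rd)$ established in~\cite{ABCT-22}.

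\emph{Scaling and concavity.} For every $v\in\V$ with $\|v\|_{L^2(\Rd)}^2=1$, writing $v=\phi+qK_0/2\pi$, I would set $g_v(\mu):=E_\rho(\sqrt{\mu}\,v,\Rd)$ for $\mu\geq0$. Since replacing $v$ by $\sqrt\mu\,v$ multiplies both the regular part $\phi$ and the charge $q$ by $\sqrt\mu$, formula~\eqref{e2} gives
\[
g_v(\mu)=\mu A_v-\tfrac1r\,\mu^{r/2}\,\|v\|_{L^r(\Rd)}^r,\qquad A_v:=\tfrac12\|\phi\|_{H^1(\Rd)}^2-\tfrac12+\left(\rho+\tfrac{\gamma-\log 2}{2\pi}\right)\tfrac{|q|^2}{2}.
\]
As $v\neq0$ forces $\|v\|_{L^r(\Rd)}>0$ and $1<r/2<2$, each $g_v$ is \emph{strictly} concave and continuous on $[0,+\infty)$. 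Since $\Eps_\rho(\mu,\Rd)=\inf\{g_v(\mu): v\in\V,\ \|v\|_{L^2(\Rd)}^2=1\}$ and the hypograph of an infimum of concave functions is the intersection of their (convex) hypographs, $\Eps_\rho(\cdot,\Rd)$ is concave on $[0,+\infty)$. Finiteness follows from coercivity for $r<4$ (Gagliardo--Nirenberg, as in~\eqref{eq-gigen} or~\eqref{gn2gen}), while the free soliton $\xi_\mu\in\V_\mu$ --- which has zero charge --- yields $\Eps_\rho(\mu,\Rd)\leq E_\rho(\xi_\mu,\Rd)=E_{NLS}(\xi_\mu,\Rd)=-\tau_r\mu^{2/(4-r)}<0$ for $\mu>0$, and $\Eps_\rho(0,\Rd)=0$. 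Exactly as in Lemma~\ref{Fal-conc}, concavity together with these values gives continuity on $[0,+\infty)$ and the codomain $(-\infty,0]$.

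\emph{Strict concavity.} Given $0\leq\mu_1<\mu_2$ and $t\in(0,1)$, I would set $\mu_t:=t\mu_1+(1-t)\mu_2\in(\mu_1,\mu_2)$, so $\mu_t>0$, and let $v_{\rho,\mu_t}$ be a ground state at mass $\mu_t$ for $E_\rho(\cdot,\Rd)$, whose existence is guaranteed by~\cite{ABCT-22}. Putting $w_t:=v_{\rho,\mu_t}/\sqrt{\mu_t}$ one has $\|w_t\|_{L^2(\Rd)}^2=1$, $\|w_t\|_{L^r(\Rd)}>0$ (because $\Eps_\rho(\mu_t,\Rd)<0$ forces $v_{\rho,\mu_t}\neq0$), $g_{w_t}(\mu_t)=E_\rho(v_{\rho,\mu_t},\Rd)=\Eps_\rho(\mu_t,\Rd)$, and $g_{w_t}(0)=0=\Eps_\rho(0,\Rd)$. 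Since $g_{w_t}$ is strictly concave on $[0,+\infty)$ and $g_{w_t}\geq\Eps_\rho(\cdot,\Rd)$ on $[0,+\infty)$ by definition of the infimum,
\[
\Eps_\rho(\mu_t,\Rd)=g_{w_t}(\mu_t)>t\,g_{w_t}(\mu_1)+(1-t)\,g_{w_t}(\mu_2)\geq t\,\Eps_\rho(\mu_1,\Rd)+(1-t)\,\Eps_\rho(\mu_2,\Rd),
\]
which is the desired strict concavity.

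The only ingredient that is not completely routine is the attainment of the infimum defining $\Eps_\rho(\mu,\Rd)$ at every mass $\mu>0$, which I would simply import from~\cite{ABCT-22}; it is precisely this that makes the passage from concavity to strict concavity possible (an infimum of strictly concave functions need not be strictly concave), the minimizer providing, at each point, a genuinely strictly concave competitor $g_{w_t}$ touching $\Eps_\rho(\cdot,\Rd)$ from above. Everything else --- the scaling identity and the hypograph argument --- parallels Lemma~\ref{Fal-conc}.
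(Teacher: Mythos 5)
Your proof is correct, but the strict-concavity step follows a genuinely different route from the paper's. The paper does not use attainment at the intermediate mass in your ``touching from above'' fashion; instead it replaces $\V_\mu$ by the subset $V_\mu$ of functions whose $L^r$ norm is at least that of the mass-normalized linear ground state $2\sqrt{\pi\omega_\rho\mu}\,\G_{\omega_\rho}$, observes that ground states for $E_\rho(\cdot,\Rd)$ belong to $V_\mu$ (they have nonvanishing singular part, by \cite[Theorem 1.5]{ABCT-22}), so the infimum is unchanged when restricted to $V_\mu$, and then notes that for $v\in V_1$ the second derivative of $f_v(\mu)=E_\rho(\sqrt{\mu}\,v,\Rd)$ is bounded away from zero uniformly in $v$ on compact subsets of $(0,+\infty)$: an infimum of uniformly strictly concave functions is strictly concave. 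Your argument instead uses only that the infimum is attained at every positive mass (again imported from \cite{ABCT-22}): the minimizer at the intermediate mass $\mu_t$ provides a strictly concave competitor $g_{w_t}$ lying above $\Eps_\rho(\cdot,\Rd)$ and touching it at $\mu_t$, which gives the strict inequality directly, and correctly so (attainment is needed only at the interior point $\mu_t>0$, so the case $\mu_1=0$ causes no trouble). Both proofs therefore lean on \cite{ABCT-22}, but on different facts: yours on existence of ground states, the paper's on their structure (nonzero charge), which buys a quantitative uniform concavity modulus on compact intervals, whereas your argument is softer and arguably more elementary. The concavity, finiteness and continuity part is the same scaling argument as in Lemma \ref{Fal-conc} in both. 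A minor remark: $v_{\rho,\mu_t}\neq 0$ follows already from the mass constraint $\|v_{\rho,\mu_t}\|_{L^2(\Rd)}^2=\mu_t>0$, so invoking the negativity of the ground state level there is unnecessary.
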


\begin{proof}
We introduce the notation 
\begin{equation}
\label{U-conc}
V_\mu:=\left\{v\in \V_{\mu}\,:\,\|v\|_{L^r(\Rd)}^{r}\geq (2\sqrt{\pi\omega_\rho})^{r}\|\G_{\omega_\rho}\|_{L^r(\Rd)}^{r}\mu^{\f{r}{2}}\right\}.
\end{equation}
Since $2\sqrt{\pi\omega_\rho\mu}\G_{\omega_\rho}\in V$, the set $V_\mu$ is not empty. Moreover, since ground states at mass $\mu$ for $E_\rho(\cdot,\Rd)$ have nonzero regular and singular part \cite[Theorem 1.5]{ABCT-22}, it is easily seen that they belong to $V_\mu$. As a consequence
\begin{equation*}
\Eps_{\rho}(\mu,\Rd)=\inf_{v\in V_\mu}E_{\rho}(v,\Rd)=\inf_{v\in V_1}E_{\rho}(\sqrt{\mu}v,\Rd),\qquad \mu\geq 0,
\end{equation*}
and, since for every fixed $v\in V_1$
the function
\begin{equation*}
f_{v}(\mu):=E_{\rho}(\sqrt{\mu}v,\Rd)=\f{\mu}{2}Q_{\rho}(v,\Rd)-\f{\mu^{\f{r}{2}}}{r}\|v\|_{L^r(\Rd)}^{r}, \qquad \mu\geq 0
\end{equation*} 
is concave on $[0,+\infty)$, $\Eps_{\rho}(\cdot,\Rd)$ is concave too on $[0,+\infty)$. Moreover, by concavity and using the facts that $\Eps_{\rho}(0,\Rd)=0$ and that $\Eps_{\rho}(\mu,\Rd)<0$ for every $\mu>0$, $\Eps_{\rho}(\cdot,\Rd)$ is also continuous on $[0,+\infty)$. Finally, by \eqref{U-conc} we have that, for every fixed $v\in V_1$,
\begin{equation*}
f''_{v}(\mu)=-\f{r-2}{4\mu^{\f{4-r}{2}}}\|v\|_{L^r(\Rd)}^{r}\leq -\f{(r-2)(2\sqrt{\pi\omega_\rho})^{\f{r}{2}}\|\G_{\omega_\rho}\|_{L^r(\Rd)}^{r}}{4\mu^{\f{4-r}{2}}}.
\end{equation*}
Hence the strict concavity of $f_{v}$ is uniform in $v$ on every interval $[a,b]\subset \Rp$, so that $\Eps_{\rho}(\cdot,\Rd)$ is strictly concave on $[0,+\infty)$.
\end{proof}

The next lemma establishes some qualitative properties of $\Eps_{\rho}(\mu,\Rd)$ when $\rho\in\R$ varies and $\mu>0$ is fixed.

\begin{lemma}
\label{lem:Erho}
Let $r\in (2,4)$ and $\mu>0$. Then $\Eps_{\rho}(\mu,\Rd)$ is a strictly increasing and continuous function of $\rho\in\R$ and, denoted by $\xi_\mu$ the only positive and radial ground state for $E_{NLS}(\cdot,\Rd)$, the following limit holds:
\begin{equation*}
\lim_{\rho\to+\infty} \Eps_{\rho}(\mu,\Rd)=E_{NLS}(\xi_\mu,\Rd).
\end{equation*}
\end{lemma}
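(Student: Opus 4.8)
The plan is to establish the three assertions one after the other, exploiting that, by \eqref{e2}, the parameter $\rho$ enters $E_\rho(\cdot,\Rd)$ only through the term $\rho|q|^2/2$.

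\emph{Monotonicity and continuity.} For every fixed $v=\phi+qK_0/2\pi\in\V_\mu$ the map $\rho\mapsto E_\rho(v,\Rd)$ is affine with slope $|q|^2/2\geq0$; hence $\Eps_\rho(\mu,\Rd)=\inf_{v\in\V_\mu}E_\rho(v,\Rd)$ is nondecreasing in $\rho$ and, being an infimum of affine functions, concave in $\rho$. It is moreover finite for every $\rho\in\R$: it is bounded above by $E_\rho(\xi_\mu,\Rd)=E_{NLS}(\xi_\mu,\Rd)$ (since $\xi_\mu\in H^1(\Rd)$ has zero charge, so this quantity does not depend on $\rho$) and bounded below by coercivity of $E_\rho(\cdot,\Rd)$ on $\V_\mu$, valid for $r<4$ (see \cite{ABCT-22} and the Gagliardo--Nirenberg estimates in the proof of Theorem \ref{thm:1}). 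Since a real-valued concave function on $\R$ is continuous, continuity follows. To upgrade the monotonicity to strict monotonicity, fix $\rho_1<\rho_2$ and let $v_{\rho_2,\mu}$ be a ground state at mass $\mu$ for $E_{\rho_2}(\cdot,\Rd)$, which exists and has nonvanishing charge $q$ by \cite[Theorem 1.5]{ABCT-22}; then
\[
\Eps_{\rho_1}(\mu,\Rd)\ \leq\ E_{\rho_1}(v_{\rho_2,\mu},\Rd)\ =\ \Eps_{\rho_2}(\mu,\Rd)-(\rho_2-\rho_1)\frac{|q|^2}{2}\ <\ \Eps_{\rho_2}(\mu,\Rd).
\]

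\emph{The limit.} Being nondecreasing and bounded above by $E_{NLS}(\xi_\mu,\Rd)$, the function $\rho\mapsto\Eps_\rho(\mu,\Rd)$ admits a limit $L\leq E_{NLS}(\xi_\mu,\Rd)$ as $\rho\to+\infty$; the task is to prove $L\geq E_{NLS}(\xi_\mu,\Rd)$. I would let $v_\rho:=v_{\rho,\mu}$ be a ground state at mass $\mu$ for $E_\rho(\cdot,\Rd)$, with charge $q_\rho$, and first prove that $q_\rho\to0$ as $\rho\to+\infty$. To this aim one uses the decomposition \eqref{dec-lambda} with $\lambda=|q_\rho|^2$, the energy representation \eqref{energy-lambda}, the Gagliardo--Nirenberg estimate \eqref{gn2gen}, the identity $\|\G_{|q_\rho|^2}\|_{L^s(\Rd)}^s=C/|q_\rho|^2$ (whence $\|\phi_{|q_\rho|^2,\rho}\|_{L^2(\Rd)}\leq\sqrt{\mu}+C$), and Young's inequality to absorb the subcritical powers of $\|\nabla\phi_{|q_\rho|^2,\rho}\|_{L^2(\Rd)}$, obtaining
\[
\Eps_\rho(\mu,\Rd)=E_\rho(v_\rho,\Rd)\ \geq\ \frac14\|\nabla\phi_{|q_\rho|^2,\rho}\|_{L^2(\Rd)}^2+\frac{|q_\rho|^2}{2}\left(\rho+\frac{\ga-\log2}{2\pi}-\mu+\frac{\log|q_\rho|}{2\pi}\right)-C|q_\rho|^{r-2}-C.
\]
Since $r-2<2$ and $\Eps_\rho(\mu,\Rd)\leq E_{NLS}(\xi_\mu,\Rd)$, this inequality first forces $|q_\rho|$ to stay bounded (otherwise the bracketed factor, dominated by $\rho$, would make the second term diverge faster than $C|q_\rho|^{r-2}$), and then $|q_\rho|\to0$ (otherwise, $|q_\rho|$ being bounded above and bounded away from zero, the second term would diverge). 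Once $q_\rho\to0$ is known, \eqref{dec-lambda} with $\lambda=1$ gives $\|v_\rho-\phi_{1,\rho}\|_{L^2(\Rd)}=|q_\rho|\,\|\G_1\|_{L^2(\Rd)}\to0$, so $\|\phi_{1,\rho}\|_{L^2(\Rd)}^2\to\mu$; rewriting \eqref{energy-lambda} with $\lambda=1$, discarding the term $(\rho+(\ga-\log2)/2\pi)|q_\rho|^2/2$ (nonnegative for $\rho$ large), bounding $\|v_\rho\|_{L^r(\Rd)}^r$ by $\|\phi_{1,\rho}\|_{L^r(\Rd)}^r+o(1)$ and the latter by $C\|\nabla\phi_{1,\rho}\|_{L^2(\Rd)}^{r-2}$ via \eqref{gn2}, one deduces both that $\|\nabla\phi_{1,\rho}\|_{L^2(\Rd)}$ is bounded and that
\[
\Eps_\rho(\mu,\Rd)\ \geq\ E_{NLS}(\phi_{1,\rho},\Rd)+o(1).
\]
Since $\phi_{1,\rho}\in H^1(\Rd)$ has mass tending to $\mu$ and uniformly bounded $H^1$-norm, rescaling it to exact mass $\mu$ and using that $\xi_\mu$ minimizes $E_{NLS}(\cdot,\Rd)$ at mass $\mu$ yields $E_{NLS}(\phi_{1,\rho},\Rd)\geq E_{NLS}(\xi_\mu,\Rd)+o(1)$. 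Hence $L\geq E_{NLS}(\xi_\mu,\Rd)$, completing the proof.

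\emph{Main obstacle.} The genuinely delicate point is the chain of implications giving $q_\rho\to0$: it is essential to run the $\lambda=|q_\rho|^2$ version of the Gagliardo--Nirenberg inequality (as in the proof of Theorem \ref{thm:1}) rather than a fixed-$\lambda$ one, so that the $|q_\rho|^2$ produced on the right-hand side of \eqref{energy-lambda} comes paired with the logarithm $\log|q_\rho|$ and stays harmless, the product $t^2\log t$ vanishing as $t\to0^+$. All remaining steps are routine.
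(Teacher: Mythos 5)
Your proof is correct, and for the main part (the limit as $\rho\to+\infty$) it follows essentially the same route as the paper: strict monotonicity via a ground state of $E_{\rho_2}(\cdot,\Rd)$ with nonvanishing charge, the bound obtained from the decomposition with $\la=|q_\rho|^2$ so that $|q_\rho|^2$ is paired with $\log|q_\rho|$ (this is exactly the paper's inequality \eqref{q-sig-bound}), the deduction $q_\rho\to0$, the comparison $\Eps_\rho(\mu,\Rd)\geq E_{NLS}(\phi_{\la,\rho},\Rd)+o(1)$, and the mass-scaling law $E_{NLS}(\phi_{\la,\rho},\Rd)\geq-\tau_r\|\phi_{\la,\rho}\|_{L^2(\Rd)}^{4/(4-r)}$. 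You deviate in two minor but sensible ways. For continuity you observe that $\rho\mapsto E_\rho(v,\Rd)$ is affine for each fixed $v$, so $\Eps_\rho(\mu,\Rd)$ is a finite concave function of $\rho$ and hence continuous; the paper instead proves one-sided continuity directly from the uniform boundedness of $(q_\rho)_{\rho\geq c}$, so your argument is a bit cleaner and does not need that boundedness for continuity. For the limit you discard the nonnegative term $\left(\rho+\tfrac{\gamma-\log 2}{2\pi}\right)\tfrac{q_\rho^2}{2}$ once $\rho$ is large, whereas the paper proves the stronger statement $\rho q_\rho^2\to0$ (its Step 3) before concluding; your shortcut is legitimate, since you still establish the two facts that Step 3 is really needed for, namely the boundedness of $\|\nabla\phi_{1,\rho}\|_{L^2(\Rd)}$ and the estimate $\big|\|v_\rho\|_{L^r(\Rd)}^r-\|\phi_{1,\rho}\|_{L^r(\Rd)}^r\big|\to0$. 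The only point to state explicitly is the existence, for every $\rho\in\R$, of a ground state of $E_\rho(\cdot,\Rd)$ at mass $\mu$ with $q_\rho\neq0$ (from \cite[Theorem 1.5]{ABCT-22}), which both your strict monotonicity step and the choice $\la=|q_\rho|^2$ rely on; you do cite it, so this is a matter of emphasis rather than a gap.
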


\begin{proof}
For any $\rho\in\R$, let $v_\rho$ a ground state for $E_{\rho}(\cdot,\Rd)$ at mass $\mu$. In particular, $v_\rho$ is positive up to a multiplication by a constant phase, and, for any  $\la>0$, it can be decomposed as $v_\rho=\phi_{\rho,\la}+q_{\rho}\G_{\la}$ with $q_{\rho}>0$ and $\phi_{\rho,\lambda}\not\equiv0$. Moreover, if we consider the decomposition corresponding to $\la_{\rho}:=q_{\rho}^{2}$, then, by applying \eqref{gn2gen} to \eqref{energy-lambda} with $\la=\la_\rho$ and using that $E_{\rho}(v_\rho,\R^{2})<0$, we get
\begin{equation}
\label{q-sig-bound}
0>E_{\rho}(v_\rho,\R^{2})
\geq \f{1}{2}\|\na \phi_{\rho,\la_{\rho}}\|_{L^2(\Rd)}^{2}-C\|\na \phi_{\rho,\la_{\rho}}\|_{L^2(\Rd)}^{r-2}+ \f{q_{\rho}^{2}}{2}\left(\log(q_\rho)-C\right)-\f{q_\rho^{r-2}}{r},
\end{equation}
which implies that $(q_{\rho})_{\rho\geq c}$ is bounded for every  $c\in\R$, as $r\in(2,4)$.

Now, to prove the full statement of the lemma we divide the proof into four steps.

\emph{Step 1: monotonicity and continuity of $\Eps_{\rho}(\mu,\Rd)$.} Let $\rho_{1}<\rho_{2}$. Since $q_{\rho_{2}}>0$, then
\begin{equation*}
\Eps_{\rho_{1}}(\mu,\Rd)\leq E_{\rho_{1}}(v_{\rho_{2}},\Rd)=E_{\rho_{2}}(v_{\rho_{2}},\Rd)+\f{\rho_{1}-\rho_{2}}{2}q_{\rho_{2}}^{2}<\Eps_{\rho_{2}}(\mu,\Rd),
\end{equation*}
so that $\Eps_{\rho}(\mu,\Rd)$ is strictly increasing in $\rho$. Fix $\widetilde{\rho}\in \R$ and let $\rho>\widetilde{\rho}$. Therefore, using the boundedness of $(q_{\rho})_{\rho\geq c}$ for any $c$,
\begin{equation*}
0<\Eps_{\rho}(\mu,\Rd)-\Eps_{\widetilde{\rho}}(\mu,\Rd)\leq E_{\rho}(v_{\widetilde{\rho}},\Rd)-E_{\widetilde{\rho}}(v_{\widetilde{\rho}},\Rd)=\f{\rho-\widetilde{\rho}}{2}q_{\widetilde{\rho}}^{2}\to 0,\qquad\text{as}\quad \rho\to \widetilde{\rho}^{+},
\end{equation*}
which implies the right continuity at $\widetilde{\rho}$. On the contrary, letting $\rho<\widetilde{\rho}$ and using the boundedness of $(q_{\rho})_{\rho\geq c}$ again,
there results that
\begin{equation*}
0>\Eps_{\rho}(\mu,\Rd)-\Eps_{\widetilde{\rho}}(\mu,\Rd)\geq E_{\rho}(v_\rho,\Rd)-E_{\widetilde{\rho}}(v_\rho,\Rd)=\f{\rho-\widetilde{\rho}}{2}q_{\rho}^{2}\to 0,\qquad\text{as}\quad \rho\to\widetilde{\rho}^{-},
\end{equation*}
which implies left continuity at $\widetilde{\rho}$.

\emph{Step 2: $\lim_{\rho \to +\infty}q_{\rho}=0$.} Assume by contradiction that there exists a positively diverging sequence $(\rho_j)_j\in\R$ such that $q_{\rho_j}\geq C>0$ for every $j$. Omitting the dependence on $j$ in the next lines, if we decompose a ground state for $F_{\rho}(\cdot,\Rd)$ at mass $\mu$ as $v_\rho=\phi_{\la_{\rho}}+q_{\rho}\G_{\la_{\rho}}$ with $\la_{\rho}:=q_{\rho}^{2}$, then \eqref{q-sig-bound} holds and hence, using the fact that $q_\rho$ is bounded from above and from below away from zero, there results
\begin{equation*}
\rho\leq C+C\|\na \phi_{\rho,\la_{\rho}}\|_{L^2(\Rd)}^{r-2}-\frac{1}{2} \|\na \phi_{\rho,\la_{\rho}}\|_{L^2(\Rd)}^2,
\end{equation*}
which contradicts the positive divergence of $(\rho_j)_j$.

\emph{Step 3: $\lim_{\rho \to +\infty}\rho q_{\rho}^{2}=0$.} Let $v_\rho=\phi_{\rho,\la}+q_{\rho}\G_{\la}$ be a ground state at mass $\mu$ for $E_{\rho}(\cdot,\Rd)$. Easy computations yield
\begin{multline}
\label{phila->mu}
\big|\sqrt{\mu}-\|\phi_{\rho,\la}\|_{L^2(\Rd)}\big|=\big|\|v_\rho\|_{L^2(\Rd)}-\|\phi_{\rho,\la}\|_{L^2(\Rd)}\big|\\[.2cm]
\leq\|v_\rho-\phi_{\rho,\lambda}\|_{L^2(\Rd)}= \f{q_{\rho}}{2\sqrt{\pi\la}}\to 0,\qquad \quad\rho \to+\infty,
\end{multline}
so that $\|\phi_{\rho,\la}\|_{L^2(\Rd)}^2\to \mu$ as $\rho\to+\infty$. Moreover, using \eqref{gn2}, \eqref{phila->mu} and $q_{\rho}\to 0$, one obtains 
\begin{equation*}
0>E_{\rho}(v_\rho,\Rd)\geq \f{1}{2}\|\na \phi_{\rho,\la}\|_{L^2(\Rd)}^{2}-C\|\na \phi_{\rho,\la}\|_{L^2(\Rd)}^{r-2}+\f{\rho q_\rho^{2}}{2}+o(1),\qquad\text{as}\quad \rho\to+\infty,
\end{equation*}
and hence $\|\na\phi_{\rho,\la}\|_{L^2(\Rd)}$ is bounded for large $\rho$. As a consequence, by Sobolev embeddings $\|\phi_{\rho,\la}\|_{L^r(\Rd)}$ and $\|v_\rho\|_{L^r(\Rd)}$ are bounded for large $\rho$. Therefore,
\begin{multline}
\label{pnorm->0}
\big|\|v_\rho\|_{L^r(\Rd)}^{r}-\|\phi_{\rho,\la}\|_{L^r(\Rd)}^{r}\big|\\[.2cm]
\leq r\sup\left\{\|v_\rho\|_{L^r(\Rd)}^{r-1}, \|\phi_{\rho,\la}\|_{L^r(\Rd)}^{r-1}\right\}|q_{\rho}|\|\G_{\la}\|_{L^r(\Rd)}\to 0,\qquad\text{as}\quad \rho\to+\infty.
\end{multline}
Finally, exploiting \eqref{standard2}, \eqref{comparison2}, \eqref{phila->mu} and \eqref{pnorm->0} we have
\begin{multline*}
0>\mathcal{E}_\rho(\mu,\Rd)-E_{NLS}(\xi_\mu,\Rd)=\mathcal{E}_\rho(\mu,\Rd)+\tau_r\|\phi_{\rho,\la}\|_{L^2(\Rd)}^\frac{4}{4-r}+o(1)\\[.2cm]
\geq E_{\rho}(v_\rho,\Rd)-E_{NLS}(\phi_{\rho,\la},\Rd)+o(1)\geq \f{\rho q_{\rho}^{2}}{2}+o(1),\qquad\text{as}\quad \rho\to+\infty,
\end{multline*}
so that $\rho q_{\rho}^{2}\to 0$ as $\rho\to+\infty$.

\emph{Step 4: limit for $\rho\to+\infty$.} Arguing as before and using \emph{Step 2} and \emph{Step 3} there results
\begin{multline*}
0<E_{NLS}(\xi_\mu,\Rd)-\Eps_{\rho}(\mu,\Rd)\leq E_{NLS}(\phi_{\rho,\la},\Rd)-E_{\rho}(v_\rho,\Rd)+o(1)\\[.2cm]
\leq -\f{\rho q_{\rho}^{2}}{2}+o(1)\to 0,\qquad\text{as} \quad \rho\to+\infty,
\end{multline*}
which concludes the proof.
\end{proof}

Now we have all the ingredients to prove Theorem \ref{thm:2}.

\begin{proof}[Proof of Theorem \ref{thm:2}]
Preliminarily, if $\beta=0$ then
\begin{equation}
\label{eq:level-beta0}
    \Eps(\mu)=\min\{\Eps_\alpha(\mu,\Rp),\Eps_\rho(\mu,\Rd)\}
\end{equation}
and every possible ground state $U$ for $E$ at mass $\mu$ is either of the form $U=(u,0)$ or $U=(0,v)$. Indeed, since $\beta=0$
\begin{equation*}
E(U)=E_{\alpha}(u,\Rp)+E_{\rho}(v,\Rd),\qquad U=(u,v)\in\D_\mu,
\end{equation*}
and thus
\begin{equation}
\label{F-sig-dec}
\Eps(\mu)=\inf_{m\in [0,\mu]}\left\{\Eps_{\alpha}(m,\Rp)+\Eps_{\rho}(\mu-m,\Rd)\right\}.
\end{equation}

Since $\Eps_{\alpha}(\cdot,\R)$ is concave and continuous in $[0,+\infty)$ by Lemma \ref{Fal-conc}, and $\Eps_{\rho}(\cdot,\Rd)$ is strictly concave and continuous in $[0,+\infty)$ by Lemma \ref{F-sig-strict-conc}, the function
\[
m\mapsto \Eps_{\alpha}(m,\Rp)+\Eps_{\rho}(\mu-m,\Rd)
\]
is strictly concave and continuous in $[0,\mu]$, hence it attains its minimum either at $m=0$ or at $m=\mu$. This entails \eqref{eq:level-beta0} and the fact that either $U=(u,0)$ or $U=(0,v)$.

Now we can prove the two claims of the theorem.

\emph{Proof of (i).}  Since \eqref{eq:alpha-mu2} holds, by Remark \ref{rem:al-p} it is guaranteed that $E_\alpha(u,\Rp)>E_{NLS}(\varphi_\mu,\R)$ for every $u\in H^1(\Rp)$. Thus, by Theorem \ref{thm:1} ground states cannot be of the form $U=(u,0)$. Therefore, when existing, they are of the form $U=(0,v)$.
Denoted by $v_\mu$ a ground state for $E_\rho(\cdot,\Rd)$ at mass $\mu$ and by  $\Upsilon_\mu$ the state  $(0, v_\mu)\in \D_\mu$, by Theorem \ref{thm:1} ground states for $E$ at mass $\mu$ exist if and only if 
\begin{equation*}
E(\Upsilon_\mu)=E_\rho(v_\mu,\Rd)\leq E_{NLS}(\varphi_\mu,\R).
\end{equation*}
Since $\lim_{\rho\to-\infty}\Eps_{\rho}(\mu,\Rd)=-\infty$ and \eqref{eq:r-rstar-mu-range} does not hold, by Lemma \ref{lem:Erho} there exists $\rho^\star\in \R$ such that $E_\rho(v_\mu,\Rd)>E_{NLS}(\varphi_\mu,\R)$ if and only if $\rho>\rho^\star$, entailing the thesis.

\emph{Proof of (ii).} The existence of ground states for $E$ at mass $\mu$ when $\rho\leq \rho^\star$ follows by Theorem \ref{thm:1} arguing as before, since 
\begin{equation*}
    \Eps(\mu)\leq E(\Upsilon_\mu)=E_\rho(v_\mu,\Rd)\leq E_{NLS}(\varphi_\mu,\R),\qquad \forall\, \rho\leq \rho^\star.
\end{equation*}
Fix now $t>0$. By Young's inequality, for every $U=(u,v)\in\D_\mu$
\begin{equation}
    \begin{split}
    \label{eq-newest2}
 E(U) &=  E_\alpha(u,\Rp)+E_\rho(v,\Rd)-\beta\Re\left(q\overline{u(0)}\right)\\
          &\geq  E_\alpha(u,\Rp)+E_\rho(v,\Rd)-\f{\beta}{2t}|u(0)|^2-\f{\beta t}{2}|q|^2\\
             &=  E_{\alpha-\f{\beta}{t}}(u,\Rp)+E_{\rho-\beta t}(v,\Rd)=:\widetilde{E}(U).
             \end{split}
\end{equation}
In particular, since in $\widetilde{E}$ there is no coupling there results that
\begin{equation*}
\Eps(\mu)\geq \inf_{U\in\D_\mu}\widetilde{E}(U)=\min\left\{\Eps_{\alpha-\f{\beta}{t}}(\mu,\Rp), \Eps_{\rho-\beta t}(\mu,\Rd)\right\}.
\end{equation*}
If \eqref{eq:alpha-mu2} holds with $\alpha$ replaced by $\alpha-\f{\beta}{t}$, i.e.
\[
\alpha-\f{\beta}{t}>\alpha_p(\mu)\quad\text{or}\quad \left(\alpha-\f{\beta}{t}=\alpha_p(\mu)\quad\text{and}\quad 2<p\leq 4\right),
\]
then by Remark \ref{rem:al-p} $E_{\alpha-\f{\beta}{t}}(u,\Rp)>E_{NLS}(\varphi_\mu,\R)$ for every $u\in H^1(\Rp)$. Let us observe that the solution of the equation $\alpha-\frac{\beta}{t}=\alpha_p(\mu)$ is given by $t^\star=\frac{\beta}{\alpha-\alpha_p(\mu)}$.

Arguing as in the \emph{Proof of (1)} and denoting with $k^\star:=\beta t^\star$, one deduces that ground states for $\widetilde{E}$ at mass $\mu$ do not exist if $\rho>\rho^\star+k^\star$ or $\rho=\rho^\star+k^\star$ and $2<p\leq 4$, hence by \eqref{eq-newest2} and Theorem \ref{thm:1} ground states for $E$ at mass $\mu$ do not exist.
\end{proof}


\section{Shape of the ground states in the coupled case: proof of Theorem \ref{thm:4}}
\label{sec-properties}

Here we establish the features of the ground states at mass $\mu$ for the energy \eqref{e} in the coupled case $\beta>0$. To do this, some preliminary results are necessary.

First we present the equations and the boundary conditions satisfied by such ground states. 

\begin{lemma}
\label{lem:EL-eq}
Let $\beta> 0$. If $\,U=(u,v)$ is a ground state at mass $\mu$ for the energy \eqref{e}, then $u\in H^2(\Rp)$, $\phi_\la:=v-q\G_\la\in H^2(\Rd)$ for every $\la>0$, and  there exists $\omega^\star=\omega^\star(U)$ such that
\begin{align}
\label{eq:half-line}
&u''+|u|^{p-2}u=\omega^\star u,\\[.2cm]
\label{eq:plane}
&\lap \phi_\la+|v|^{r-2}v=\omega^\star \phi_\la+(\omega^\star-\lambda)q\G_\la,\\[.2cm]
\label{first-cond}
&u'(0)=\alpha u(0)-\beta q,\\[.2cm]
\label{sec-cond}
&\phi_\la(0)=-\beta u(0)+\left(\rho+\f{\gamma-\log(2)+\log(\sqrt{\la})}{2\pi}\right)q.
\end{align}
Moreover, $\omega^\star$ satisfies
\begin{equation}
\label{eq:omega-star}
\omega^\star(U)=\frac{\|u\|_{L^p(\Rp)}^p+\|v\|_{L^r(\Rd)}^r-Q(U)}{\mu}.
\end{equation}
\end{lemma}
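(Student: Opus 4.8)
The plan is to recognize $U$ as a constrained critical point of $E$ on $\D$ and to derive the Euler--Lagrange system by varying separately the three coordinates $u$, $\phi_\lambda$ and $q$ of the energy domain, then to upgrade the resulting weak identities to the pointwise statements \eqref{eq:half-line}--\eqref{sec-cond} through elliptic regularity and integration by parts. Since $E(U)<0=E(0,0)$ one has $U\neq(0,0)$, so the mass functional $M(U):=\|u\|_{L^2(\Rp)}^2+\|v\|_{L^2(\Rd)}^2$ has non-vanishing differential at $U$; moreover $E$ and $M$ are of class $C^1$ on $\D$, the only non-routine terms being $\|u\|_{L^p(\Rp)}^p$ (standard, as $u\in H^1(\Rp)\hookrightarrow L^\infty(\Rp)$) and $\|v\|_{L^r(\Rd)}^r$, whose smoothness in $(\phi_\lambda,q)$ rests on the fact that, by \eqref{decv} and the exponential decay of $K_0$, $v\in L^s(\Rd)$ for every finite $s$ (cf.\ \cite{ABCT-22}). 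Hence there is $\omega^\star=\omega^\star(U)\in\R$ such that $U$ is a free critical point of $U\mapsto E(U)+\tfrac{\omega^\star}{2}\|U\|_{L^2(\I)}^2$, the sign being chosen so that \eqref{eq:half-line}--\eqref{eq:plane} carry $+\omega^\star$.

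\emph{The half-line.} Varying $u\mapsto u+\ep h$ with $h\in H^1(\Rp)$ yields
\[
\Re\langle u',h'\rangle+\alpha\,\Re\big(u(0)\overline{h(0)}\big)-\Re\langle|u|^{p-2}u,h\rangle-\beta\,\Re\big(q\,\overline{h(0)}\big)=-\omega^\star\,\Re\langle u,h\rangle.
\]
Choosing $h\in C_c^\infty(0,\infty)$ gives $u''+|u|^{p-2}u=\omega^\star u$ in the distributional sense on $(0,\infty)$, and since $\omega^\star u-|u|^{p-2}u\in L^2(\Rp)$ (again by $u\in H^1(\Rp)\hookrightarrow L^\infty(\Rp)$) this forces $u''\in L^2(\Rp)$, i.e.\ $u\in H^2(\Rp)$ and \eqref{eq:half-line} a.e.; substituting back into the identity above and integrating by parts (the boundary term at $+\infty$ vanishing as $u,u'\in H^1(\Rp)$) leaves $\Re\big((-u'(0)+\alpha u(0)-\beta q)\overline{h(0)}\big)=0$ for every $h(0)\in\C$, which is \eqref{first-cond}.

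\emph{The plane.} Fix $\lambda>0$ and use $v=\phi_\lambda+q\G_\lambda$ together with \eqref{energy-lambda}. Varying $\phi_\lambda\mapsto\phi_\lambda+\ep\psi$ ($\psi\in H^1(\Rd)$, so $v\mapsto v+\ep\psi$) produces
\[
\Re\langle\nabla\phi_\lambda,\nabla\psi\rangle+\lambda\,\Re\langle\phi_\lambda,\psi\rangle-\lambda\,\Re\langle v,\psi\rangle-\Re\langle|v|^{r-2}v,\psi\rangle=-\omega^\star\,\Re\langle v,\psi\rangle,
\]
which rearranges algebraically to \eqref{eq:plane} in the distributional sense on $\Rd$; since $\G_\lambda\in L^2(\Rd)$ and $|v|^{r-2}v\in L^2(\Rd)$ (as $v\in L^s(\Rd)$ for all finite $s$), the right-hand side of \eqref{eq:plane} lies in $L^2(\Rd)$, whence $\lap\phi_\lambda\in L^2(\Rd)$ and $\phi_\lambda\in H^2(\Rd)$, and then $\phi_\nu\in H^2(\Rd)$ for every $\nu>0$ by \eqref{change}, since $\G_\lambda-\G_\nu\in H^2(\Rd)$ (the logarithmic singularities cancel). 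Finally, varying $q\mapsto q+\ep\zeta$ at fixed $\phi_\lambda$ (so $v\mapsto v+\ep\zeta\G_\lambda$) gives a scalar identity in $\zeta\in\C$ which, after rewriting the $\G_\lambda$-pairings by means of \eqref{eq:plane} and using $\langle(-\lap+\lambda)\phi_\lambda,\G_\lambda\rangle=\phi_\lambda(0)$ (legitimate since $\G_\lambda=(-\lap+\lambda)^{-1}\delta_0$ and $\phi_\lambda\in H^2(\Rd)\hookrightarrow C^0(\Rd)$), collapses to \eqref{sec-cond}.

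\emph{The value of $\omega^\star$, and the main difficulty.} Testing the full Euler--Lagrange identity against $U$ itself and exploiting the $2$-homogeneity of the quadratic part together with the $p$- and $r$-homogeneity of the nonlinear terms, one obtains $Q(U)-\|u\|_{L^p(\Rp)}^p-\|v\|_{L^r(\Rd)}^r=-\omega^\star\mu$, that is \eqref{eq:omega-star}. I expect the plane step to be the real obstacle: because $\G_\lambda\notin H^1(\Rd)$ the coordinate $q$ is genuinely singular, so $v$ cannot be treated as an $H^1$-variable and the boundary value of $\phi_\lambda$ at the origin is encoded in the $q$-variation only implicitly; extracting \eqref{sec-cond} requires the already-regularized equation \eqref{eq:plane} and a careful pairing with $\G_\lambda$, while bookkeeping the $\lambda$-dependent constant $\tfrac{\gamma-\log2+\log\sqrt{\lambda}}{2\pi}$ (consistency across different $\lambda$ being ensured by \eqref{change}). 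A secondary technical point is the $C^1$-regularity of $U\mapsto\|v\|_{L^r(\Rd)}^r$ and the justification of the boundary integration by parts at the junction, both resting on the integrability $v\in L^s(\Rd)$, $s<\infty$, granted by the decomposition \eqref{decv}.
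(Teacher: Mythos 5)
Your proposal is correct and follows essentially the same route as the paper: a Lagrange-multiplier (Euler--Lagrange) identity on $\D$, partial variations in $u$, $\phi_\la$ and $q$, bootstrapping to $H^2$ regularity from the weak equations, and the pairing $\scal{(-\lap+\la)\phi_\la}{\G_\la}=\phi_\la(\z)$ to extract \eqref{sec-cond}, with \eqref{eq:omega-star} obtained by testing against $U$ itself. The only (immaterial) differences are organizational: you obtain \eqref{first-cond} directly from the $u$-variation rather than from the joint boundary identity, and you prove $\phi_\la\in H^2(\Rd)$ for one $\la$ and propagate it via \eqref{change}, whereas the paper runs the variational argument for arbitrary $\la$.
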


\begin{proof}
First, by the Lagrange Multipliers theorem 
\begin{multline}
\label{EL-eq-hyb}
 \scal{\eta'}{u'}_{L^{2}(\Rp)}+\alpha\overline{\eta(0)}u(0)-\scal{\eta}{|u|^{p-2}u}_{L^{2}(\Rp)}+\omega^\star\scal{\eta}{u}_{L^{2}(\Rp)}\\[.2cm]
 +\scal{\na \chi_{\la}}{\na \phi_{\la}}_{L^{2}(\Rd)}+\la \scal{\chi_{\la}}{\phi_{\la}}_{L^{2}(\Rd)}+(\omega^\star-\la)\scal{\chi}{v}_{L^{2}(\Rd)}+\overline{Q}q(\rho+\theta_{\la})\\[.2cm]
 -\scal{\chi}{|v|^{r-2}v}_{L^{2}(\Rd)}-\beta\overline{Q}u(0)-\beta q\overline{\eta(0)}=0,
\end{multline}
for every $\eta\in H^{1}(\Rp)$ and every $\chi=\chi_{\la}+Q\G_{\la}\in \V$. Notice that the expression \eqref{eq:omega-star} can be deduced from \eqref{EL-eq-hyb} by taking $\eta=u$ and $\chi=v$. Now, if $\chi=0$ and $\eta \in H^{1}_{0}(\Rp)$ in \eqref{EL-eq-hyb}, then 
\begin{equation*}
\scal{\eta'}{u'}_{L^{2}(\Rp)}+\scal{\eta}{\omega^\star u-|u|^{p-2}u}_{L^{2}(\Rp)}=0,
\end{equation*}
and thus, since $\omega^\star u-|u|^{p-2}u\in L^{2}(\Rp)$, $u\in H^2(\Rp)$ and \eqref{eq:half-line} holds. On the other hand, if $\eta=0$ and $Q=0$ in \eqref{EL-eq-hyb}, then 
\begin{equation}
\label{EL-plane}
\scal{\na \chi}{\na \phi_{\la}}_{L^{2}(\Rd)}+\scal{\chi}{\omega^\star \phi_{\la}+q(\omega^\star-\la)\G_{\la}-|v|^{r-2}v}_{L^{2}(\Rd)}=0.
\end{equation}
Thus, arguing as before, $\phi_\la\in H^2(\Rd)$ and \eqref{eq:plane} is satisfied. Finally, setting $\chi_{\la}=0$ and $Q=1$ in \eqref{EL-eq-hyb} and applying \eqref{eq:half-line} and \eqref{eq:plane}, there results
\begin{equation}
\label{eq-almostcond}
-\overline{\eta(0)}u'(0)+\alpha \overline{\eta(0)}u(0)-\scal{\G_{\la}}{\left(-\lap+\la\right)\phi_{\la}}+q(\rho+\theta_{\la})-\beta u(0)-\beta q\overline{\varphi(0)}=0
\end{equation}
for every $\eta\in H^{1}(\Rp)$. 
Now, whenever $\eta(0)=0$, since $\scal{\G_{\la}}{\left(-\lap+\la\right)\phi_{\la}}=\phi_{\la}(\z)$, we get \eqref{sec-cond}. On the other hand, setting $\eta(0)=1$ and plugging \eqref{first-cond} in \eqref{eq-almostcond}, \eqref{first-cond} follows. 
\end{proof}

Then, relying on Lemma \ref{lem:EL-eq} we show that the ground states must be supported on both the half-line and the plane.

\begin{proposition}
\label{prop:uv-neq-0}
Let $\beta> 0$. If $U=(u,v)$ is a ground state at mass $\mu$ for the energy \eqref{e}, then $u\not\equiv 0$ and $v\not\equiv 0$. In particular $\phi_{\la}=v-q\G_\la\not\equiv 0$ for every $\lambda>0$, and $q\neq 0$.

In addition, $q>0$ and $u(x)>0$ for every $x\geq 0$ up to multiplication by a constant phase.
\end{proposition}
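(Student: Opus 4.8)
The plan is to run the argument off the Euler--Lagrange system of Lemma~\ref{lem:EL-eq}, the (strict) concavity of the reduced energies in Lemmas~\ref{Fal-conc}--\ref{F-sig-strict-conc}, uniqueness for the Cauchy problem of \eqref{eq:half-line}, and the fact, proved in \cite[Theorem 1.5]{ABCT-22}, that every ground state of $E_\rho(\cdot,\Rd)$ has nonzero charge. First I would exclude $u\equiv 0$. If $u\equiv 0$, then \eqref{first-cond} reduces to $0=-\beta q$, hence $q=0$ (as $\beta>0$), so $v\in H^1(\Rd)$ and $U=(0,v)$; since the coupling term vanishes, $E_\rho(v,\Rd)=E(U)=\Eps(\mu)$, while $\Eps(\mu)\leq E(0,w)=E_\rho(w,\Rd)$ for every $w\in\V_\mu$ gives $\Eps(\mu)\leq\Eps_\rho(\mu,\Rd)\leq E_\rho(v,\Rd)$, so $v$ would be a ground state of $E_\rho(\cdot,\Rd)$ at mass $\mu$ with vanishing charge -- a contradiction. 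Hence $u\not\equiv 0$.

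Next I would prove $q\neq 0$ (whence $v\not\equiv 0$), by contradiction in two cases. If $v\equiv 0$, then $q=0$ and $\phi_\la\equiv 0$, so \eqref{sec-cond} forces $u(0)=0$, then \eqref{first-cond} forces $u'(0)=0$, and since $u\in H^2(\Rp)$ solves \eqref{eq:half-line} -- whose nonlinearity $z\mapsto\omega^\star z-|z|^{p-2}z$ is locally Lipschitz because $p>2$ -- uniqueness for the Cauchy problem yields $u\equiv 0$, against the previous step. If instead $v\not\equiv 0$ but $q=0$, then $v\in H^1(\Rd)$, the coupling again disappears, and, writing $m:=\|u\|_{L^2(\Rp)}^2$, which lies strictly inside $(0,\mu)$ since $u\not\equiv 0$ and $v\not\equiv 0$, one gets $\Eps(\mu)=E_\alpha(u,\Rp)+E_\rho(v,\Rd)\geq\Eps_\alpha(m,\Rp)+\Eps_\rho(\mu-m,\Rd)$; by Lemmas~\ref{Fal-conc}--\ref{F-sig-strict-conc} the right-hand side, as a function of $m$, is continuous and strictly concave on $[0,\mu]$, hence at the interior point $m$ it is strictly larger than $\min\{\Eps_\alpha(\mu,\Rp),\Eps_\rho(\mu,\Rd)\}$ (using $\Eps_\alpha(0,\Rp)=\Eps_\rho(0,\Rd)=0$), which in turn is $\geq\Eps(\mu)$ -- a contradiction. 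Thus $q\neq 0$, and then $v\not\equiv 0$ because $\phi_\la\in H^1(\Rd)$ but $\G_\la\notin H^1(\Rd)$. That $\phi_\la\not\equiv 0$ for every $\la>0$ would then follow from \eqref{eq:plane}: if $\phi_\la\equiv 0$ one would get $|v|^{r-2}v=(\omega^\star-\la)v$ a.e.\ with $v=q\G_\la$ nowhere vanishing, forcing $|q\G_\la|^{r-2}\equiv\omega^\star-\la$, impossible since $\G_\la$ is non-constant and $r\neq 2$.

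For the qualitative part I would first normalize: multiplying $u$ and $v$ by suitable constant phases preserves $E_\alpha$, $E_\rho$ and the mass, turns $q$ into $|q|>0$, and makes $-\beta\Re(q\overline{u(0)})$ equal to its minimum $-\beta|q|\,|u(0)|$, so by minimality the new couple is still a ground state; hence one may assume $q>0$ and $u(0)\geq 0$, and replacing $u$ by $|u|$ (still a ground state with the same boundary value) one may assume $u\geq 0$. Then $u\in H^2(\Rp)$ solves $u''+u^{p-1}=\omega^\star u$ on $(0,\infty)$ with $u,u'\to 0$ at infinity, and multiplying by $u'$ and integrating gives the first integral $(u')^2=\omega^\star u^2-\tfrac{2}{p}u^p$, which forces $\omega^\star>0$ since $u\not\equiv 0$. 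Combining with $u'(0)=\alpha u(0)-\beta q$ rules out $u(0)=0$ (it would give $\beta q=0$, against $q>0$), so $u(0)>0$; and if $u$ vanished at some $x_0>0$ the first integral would give $u'(x_0)=0$ and Cauchy uniqueness would force $u\equiv 0$. Hence $u>0$ on $[0,\infty)$, which yields the claim after undoing the phase.

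The main obstacle is the interdependence of the nontriviality statements: none of $u\not\equiv 0$, $v\not\equiv 0$, $q\neq 0$ is available a priori, and the strict-concavity dichotomy producing $q\neq 0$ genuinely requires both components nonzero, so it can only be run after the Euler--Lagrange/ODE-uniqueness argument excluding $v\equiv 0$, which itself needs $u\not\equiv 0$; here the \emph{strict} concavity of $\Eps_\rho(\cdot,\Rd)$ from Lemma~\ref{F-sig-strict-conc}, not just concavity, is what makes the inequality strict. The positivity part is then comparatively routine, the one delicate point being that the Robin condition $u'(0)=\alpha u(0)-\beta q$ must be combined with the first integral of \eqref{eq:half-line} to exclude a zero of $u$ at $0$.
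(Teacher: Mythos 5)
Your treatment of the nontriviality statements is correct and is a genuine (mild) variant of the paper's argument. You prove $u\not\equiv 0$ first, by showing that otherwise \eqref{first-cond} forces $q=0$ and $v$ would be a ground state of $E_\rho(\cdot,\Rd)$ at mass $\mu$ with vanishing charge, contradicting \cite[Theorem 1.5]{ABCT-22} (a fact the paper itself invokes elsewhere); the paper instead excludes $u\equiv0$ by noting $v(0)=0$ and comparing strictly with $E_{NLS}(\xi_\mu,\Rd)$. You then exclude $v\equiv 0$ via $u(0)=u'(0)=0$ and Cauchy uniqueness for \eqref{eq:half-line} (legitimate, since $z\mapsto\omega^\star z-|z|^{p-2}z$ is locally Lipschitz for $p>2$), where the paper uses the strict energy comparison $E_\alpha(u,\Rp)>E_{NLS}(\varphi_\mu,\R)$ when $u(0)=0$; your ODE route is arguably more self-contained. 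The step $q\neq0$ via strict concavity of $m\mapsto\Eps_\alpha(m,\Rp)+\Eps_\rho(\mu-m,\Rd)$ on $[0,\mu]$ is essentially the paper's Step 3 (the paper uses $E_{NLS}(\xi_{\mu-m},\Rd)$ in place of $\Eps_\rho(\mu-m,\Rd)$, which changes nothing), and your argument for $\phi_\la\not\equiv0$ coincides with the paper's.

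The positivity part, however, has a genuine gap. After rotating the two components by separate phases you replace $u$ by $|u|$ and prove that the \emph{modified} ground state has a strictly positive half-line component; but replacing $u$ by $|u|$ is not a multiplication by a constant phase, so ``undoing the phase'' does not give the stated conclusion that the \emph{original} $U$, up to one constant phase, has $q>0$ and $u>0$: what you have proved is only $|u|>0$. To close the gap you must exploit the equality cases created by minimality: since $(|u|,v)$ is again a ground state, $\|\,|u|'\,\|_{L^2(\Rp)}=\|u'\|_{L^2(\Rp)}$, which combined with $|u|>0$ on the connected set $[0,+\infty)$ forces $u=e^{i\theta}|u|$ for a constant $\theta$; moreover the separate-rotation argument forces $\Re\big(q\overline{u(0)}\big)=|q|\,|u(0)|$ with $u(0)\neq0$, hence $\arg q=\theta$, so a single common phase indeed works. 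Alternatively, follow the paper: after a common rotation making $q>0$, every $L^2$ solution of \eqref{eq:half-line} is a constant phase times a translated soliton, and the Robin condition \eqref{first-cond}, whose right-hand side contains the real nonzero term $-\beta q$, pins that phase to $\pm1$; the sign $-1$ is then excluded because it strictly increases the coupling term. Either completion is short, but one of them is needed; the rest of your positivity argument (first integral, $u(0)>0$, no interior zeros by uniqueness) is fine.
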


\begin{proof}
Preliminarily, note that if $U=(u,v)$ is a ground state at mass $\mu$ for the energy \eqref{e}, then $U$ satisfies \eqref{first-cond}-\eqref{sec-cond}. Now, for the sake of clarity, we divide the proof into several steps.

\emph{Step 1: $v\not\equiv0$.} Assume by contradiction  $v\equiv 0$. Therefore, since $\beta\neq 0$, \eqref{sec-cond} implies $u(0)=0$. Thus, denoted with $\varphi_\mu$ the one-dimensional soliton of mass $\mu$, by \eqref{bound1} and the fact that $u(0)=0$
\begin{equation*}
E(U)=E_\alpha(u,\Rp)>E_{NLS}(\varphi_\mu,\R)\geq \Eps(\mu),
\end{equation*}
which is a contradiction.

\emph{Step 2: $u\not\equiv0$.} Assume by contradiction  $u\equiv 0$. Therefore, since $\beta\neq 0$, \eqref{first-cond} implies $q=0$, so that $v\in H^{1}(\Rd)$ and $v(0)=0$. Moreover, by \cite[Theorem 1.5]{ABCT-22} $v$ cannot be a ground state for $E_{NLS}(\cdot,\Rd)$ at mass $\mu$ since it is not positive up to the multiplication by a constant phase. Thus, denoted by $\xi_\mu$ the only positive and spherically symmetric ground state of $E_{NLS}(\cdot,\Rd)$, by \eqref{bound2} there results
\begin{equation*}
E(U)=E_\rho(v,\Rd)=E_{NLS}(v,\Rd)>E_{NLS}(\xi_\mu,\Rd)\geq \Eps(\mu),
\end{equation*}
which is a contradiction.

\emph{Step 3: $q\neq 0$.} Assume by contradiction  $q=0$. This entails that $v\in H^{1}(\Rd)$ and
\begin{equation*}
E(U)=E_{\alpha}(u,\Rp)+E_{NLS}(v,\Rd).
\end{equation*}
As a consequence
\begin{equation*}
\Eps(\mu)=\inf_{m\in[0,\mu]}\left\{\Eps_{\alpha}(m,\Rp)+E_{NLS}(\varphi_{\mu-m},\Rd)\right\}.
\end{equation*}
Since $\Eps_{\alpha}(\cdot,\Rp)$ is a concave and continuous function on $[0,\mu]$, by Lemma \ref{Fal-conc} and 
\[
E_{NLS}(\varphi_{\mu-m},\Rd)=-\tau_r(\mu-m)^{\f{2}{4-r}},
\]
there results that the function
\[
m\mapsto\Eps_{\alpha}(m,\Rp)+\Eps(\mu-m,\Rd)
\]
is strictly concave and continuous on $[0,\mu]$. Thus it attains its minimum at $m=0$ or $m=\mu$, but this would imply either $u\equiv 0$ or $v\equiv 0$, which are prevented by Step 1 and Step 2.

\emph{Step 4: $\phi_{\la}\not\equiv 0$, for every $\la>0$.} Assume by contradiction $\phi_{\la}\equiv 0$ for some $\lambda>0$. Since $v$ solves \eqref{eq:plane}, $q\neq0$ and $\G_\lambda(\x)>0$ for all $\x\in \Rd\setminus\{0\}$, there results 
\begin{equation*}
\omega-\lambda=|q|^{r-2}\G_{\la}^{r-2}(\x),\qquad \x\in \Rd\setminus\{0\},
\end{equation*}
which is false.

\emph{Step 5: $q>0$ and $u(x)>0$ for every $x\geq0$ up to a multiplication by a constant phase.} Assume $q=|q|\mathrm{e}^{i\eta}$ with $\eta\neq 2k\pi$, $k\in\Z$ and define $U_\eta=(\mathrm{e}^{-i\eta}u,\mathrm{e}^{-i\eta}v)=(\mathrm{e}^{-i\eta}u,\mathrm{e}^{-i\eta}\phi_\la+|q|\G_\la)$. Since $\|U_\eta\|_{L^2(\I)}^2=\|U\|_{L^2(\I)}^2=\mu$ and
 \begin{equation*}
 \begin{split}
  E(U_\eta)&=E_\alpha(\mathrm{e}^{-i\eta}u,\Rp)+E_\rho(\mathrm{e}^{-i\eta}\phi_\la+|q|\G_\la,\Rd)-\beta\Re\left(|q|\overline{\mathrm{e}^{-i\eta}u(0)}\right)\\
                  &=E_\alpha(u,\Rp)+E_\rho(v,\Rd)-\beta\Re\left(\mathrm{e}^{i\eta}|q|\overline{u(0)}\right)=E(U),
                  \end{split}
 \end{equation*}
 $U_\eta$ is a ground state for $E$ at mass $\mu$. Hence $q>0$ up to  multiplication by a constant phase. Similarly, since the only $L^2$ solutions of \eqref{eq:half-line} is the one-dimensional soliton up to translation and multiplication by a constant phase and \eqref{first-cond} holds, then either $u>0$ or $u<0$. However, if it were negative then $-\beta\Re\left(q(\overline{-u(0)})\right)<-\beta\Re\left(q\overline{u(0)}\right)$, so that $E(U)>E(-u,v)$, which is absurd. Thus $u$ is positive.
\end{proof}

Finally, in order to prove Theorem \ref{thm:4} it is necessary to exhibit some further variational properties fulfilled by the ground states of the energy \eqref{e}. To this aim we introduce the action functional $S_{\omega}$. Given $\omega\in \R$, the action functional at frequency $\omega$ is the functional $S_{\omega}:\D\to \R$ such that
\begin{equation}
\label{S-om}
S_{\omega}(U):=E(U)+\f{\omega}{2}\|U\|_{L^{2}(\I)}^{2}.
\end{equation}
In addition, the Nehari's manifold at frequency $\omega$ is defined by
\begin{equation}
N_{\omega}:=\{U\in \D\setminus \{0\}\,:\,I_{\omega}(U)=0\},
\end{equation}
where $I_{\omega}:\D\to \R$ is the functional
\begin{equation}
\label{I-om}
I_{\omega}(U):=Q_{\omega}(U)-\|u\|_{L^{p}(\Rp)}^{p}-\|v\|_{L^{r}(\Rd)}^{r}, \qquad U = (u,v)
\end{equation}
with
\begin{equation*}
Q_{\omega}(U):=Q(U)+\omega\|U\|_{L^{2}(\I)}^{2}.
\end{equation*}
Then we give the following definition.

\begin{definition}[Action minimizers]
\label{def:ac}
We call \emph{action minimizer} at frequency $\omega\in\R$ any function $U \in N_{\omega}$ such that
\begin{equation}
S_{\omega}(U)=d(\omega):=\inf_{N_{\omega}}S_{\omega}.
\end{equation}
\end{definition}

\begin{remark}
\label{rem-bs}
 Action minimizers at frequency $\omega$ satisfy \eqref{eq:half-line}, \eqref{eq:plane}, \eqref{first-cond} and \eqref{sec-cond}, with $\omega^\star$ replaced by $\omega$.
\end{remark}

In the following lemma we establish a  connection between ground states at fixed mass for the energy \eqref{e} and action minimizers at fixed frequency. The proof follows the ideas of \cite{DST-23, JL-22}.

\begin{lemma}
\label{gs->min}
If $U$ is a ground state at mass $\mu$ for the energy \eqref{e}, then $U$ is an action minimizer at frequency $\omega^\star$. Moreover
$
\omega^{\star}>E_{\rm{lin}},
$
with $E_{\rm{lin}}$ defined as in \eqref{Elin}.
\end{lemma}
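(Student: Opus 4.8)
The plan is to split the statement into two parts: first that $U$ realizes $d(\omega^\star)$ over $N_{\omega^\star}$, and then the strict inequality $\omega^\star>E_{\rm{lin}}$, which will follow almost for free once the first part is in place, combined with the a priori bound \eqref{bound3}. For the first part I start from Lemma \ref{lem:EL-eq}, which produces the multiplier $\omega^\star=\omega^\star(U)$ and, in particular, the identity \eqref{eq:omega-star}. Rewriting \eqref{eq:omega-star} as $Q(U)+\omega^\star\|U\|_{L^2(\I)}^2=\|u\|_{L^p(\Rp)}^p+\|v\|_{L^r(\Rd)}^r$, that is $I_{\omega^\star}(U)=0$, and recalling that $U\ne(0,0)$ because $\mu>0$, we get $U\in N_{\omega^\star}$ and hence $d(\omega^\star)\le S_{\omega^\star}(U)$. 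It then remains to prove $S_{\omega^\star}(U)\le S_{\omega^\star}(W)$ for every $W=(w_1,w_2)\in N_{\omega^\star}$.

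For this I would use the fibering map $t\mapsto S_{\omega^\star}(tW)$, $t>0$. Since $Q$ and the mass are $2$-homogeneous and multiplication by a scalar preserves the decomposition \eqref{decv}, one has
\[
S_{\omega^\star}(tW)=\tfrac{t^2}{2}\,Q_{\omega^\star}(W)-\tfrac{t^p}{p}\|w_1\|_{L^p(\Rp)}^p-\tfrac{t^r}{r}\|w_2\|_{L^r(\Rd)}^r,\qquad t>0,
\]
so that $\tfrac{d}{dt}S_{\omega^\star}(tW)=\tfrac1t I_{\omega^\star}(tW)$ and $\tfrac1t\tfrac{d}{dt}S_{\omega^\star}(tW)=Q_{\omega^\star}(W)-t^{p-2}\|w_1\|_{L^p(\Rp)}^p-t^{r-2}\|w_2\|_{L^r(\Rd)}^r$. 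If $W\in N_{\omega^\star}$, then $Q_{\omega^\star}(W)=\|w_1\|_{L^p(\Rp)}^p+\|w_2\|_{L^r(\Rd)}^r>0$, the last expression is strictly decreasing in $t$ (because $p,r>2$) and vanishes at $t=1$; hence $t\mapsto S_{\omega^\star}(tW)$ is strictly increasing on $(0,1)$, strictly decreasing on $(1,+\infty)$ and attains its maximum at $t=1$. Choosing $t_0:=\sqrt{\mu/\|W\|_{L^2(\I)}^2}>0$, so that $t_0W\in\D_\mu$, we conclude
\[
S_{\omega^\star}(W)\ \ge\ S_{\omega^\star}(t_0W)\ =\ E(t_0W)+\tfrac{\omega^\star}{2}\mu\ \ge\ \Eps(\mu)+\tfrac{\omega^\star}{2}\mu\ =\ S_{\omega^\star}(U).
\]
Taking the infimum over $W\in N_{\omega^\star}$ gives $d(\omega^\star)\ge S_{\omega^\star}(U)$, and therefore $S_{\omega^\star}(U)=d(\omega^\star)$, i.e.\ $U$ is an action minimizer at frequency $\omega^\star$.

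For the bound $\omega^\star>E_{\rm{lin}}$, observe that subtracting $\tfrac12 I_{\omega^\star}$ shows that on $N_{\omega^\star}$ one has $S_{\omega^\star}(W)=\bigl(\tfrac12-\tfrac1p\bigr)\|w_1\|_{L^p(\Rp)}^p+\bigl(\tfrac12-\tfrac1r\bigr)\|w_2\|_{L^r(\Rd)}^r$, which is strictly positive since $p,r>2$ and the argument is nontrivial; in particular $d(\omega^\star)=S_{\omega^\star}(U)>0$. On the other hand, by \eqref{bound3} we have $\Eps(\mu)<-\tfrac{E_{\rm{lin}}}{2}\mu$, so that $S_{\omega^\star}(U)=\Eps(\mu)+\tfrac{\omega^\star}{2}\mu<\tfrac{\mu}{2}\bigl(\omega^\star-E_{\rm{lin}}\bigr)$. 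Combining the two estimates, $0<\tfrac{\mu}{2}(\omega^\star-E_{\rm{lin}})$, hence $\omega^\star>E_{\rm{lin}}$.

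The whole argument is elementary; the only spot deserving attention is the verification that $Q_{\omega^\star}(W)>0$ for every $W\in N_{\omega^\star}$, which is what guarantees the unimodal profile of the fibering map and hence the comparison $S_{\omega^\star}(W)=\max_{t>0}S_{\omega^\star}(tW)$, together with the admissibility of the scaled competitor $t_0W$ in $\D_\mu$. I do not expect a genuine obstacle here: this lemma is the (by now standard) bridge between the mass-constrained minimization and the action formulation on the Nehari manifold, and it is precisely what makes the rearrangement techniques of Section \ref{sec-properties} applicable.
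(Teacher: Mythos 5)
Your proof is correct and takes essentially the same route as the paper's: the multiplier identity \eqref{eq:omega-star} places $U$ on $N_{\omega^\star}$, the fibering-map maximum at $t=1$ combined with rescaling an arbitrary competitor to mass $\mu$ reduces the action comparison to the ground-state property of $U$, and the bound $\omega^\star>E_{\rm{lin}}$ follows from the strict positivity of $S_{\omega^\star}(U)=\widetilde{S}(U)$ on the Nehari manifold together with \eqref{bound3}, exactly as in the paper's algebraic estimate. The only cosmetic difference is that you argue directly while the paper phrases the first part as a contradiction.
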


\begin{proof}
Let $U=(u,v)$ be a ground state for $E$ at mass $\mu$ and let $\omega^\star$ be the associated frequency defined by \eqref{eq:omega-star}. Now assume by contradiction that there exists $\widetilde{U}=(\widetilde{u}, \widetilde{v}) \in N_{\omega^\star}$ such that $S_{\omega^\star}(\widetilde{U})<S_{\omega^\star}\left(U\right)$. Fixing $\tau>0$ in such a way that $\|\tau \widetilde{U}\|_{L^{2}(\I)}^2=\mu$, there results
\begin{equation*}
S_{\omega^\star}(\tau \widetilde{U})=\f{\tau^2}{2}Q_{\omega^\star}(U)-\f{\tau^p}{p}\|\widetilde{u}\|_{L^{p}(\Rd)}^p-\f{\tau^{r}}{r}\|\widetilde{v}\|_{L^{r}(\Rd)}^{r}.
\end{equation*}
In addition, since $\widetilde{U}\in N_{\omega^\star}$, by \eqref{I-om} we get
\begin{equation*}
\begin{split}
\f{d}{d\tau}S_{\omega^\star}(\tau \widetilde{U})&=\tau Q_{\omega^\star}(U)-\tau^{p-1}\|\widetilde{u}\|_{L^{p}(\Rd)}^p-\tau^{r-1}\|\widetilde{v}\|_{L^{r}(\Rd)}^{r}\\
&=\tau I_{\omega^\star}(\widetilde{U})+\tau\left[(1-\tau^{p-2})\|\widetilde{u}\|_{L^{p}(\Rp)}^p+(1-\tau^{r-2})\|\widetilde{v}\|_{L^{r}(\Rd)}^{r}\right]\\
&=\tau\left[(1-\tau^{p-2})\|\widetilde{u}\|_{L^{p}(\Rp)}^p+(1-\tau^{r-2})\|\widetilde{v}\|_{L^{r}(\Rd)}^{r}\right],
\end{split}
\end{equation*}
which is greater than or equal to zero if and only if $0<\tau\leq 1$. Hence $S_{\omega^\star}(\tau \widetilde{U})\le S_{\omega^\star}(\widetilde{U})$ for every $\tau>0$. Therefore, since $S_{\omega^\star}(\tau \widetilde{U})\le S_{\omega^\star}(\widetilde{U}) <S_{\omega^\star}(U)$,
\begin{equation*}
E(\tau \widetilde{U})+\f{\omega^\star}{2}\|\tau \widetilde{U}\|_{L^{2}(\I)}^2<E(U)+\f{\omega^\star}{2}\|U\|_{L^{2}(\I)}^2.
\end{equation*}
However, as $\|\tau \widetilde{U}\|_{L^{2}(\I)}^2=\|U\|_{L^{2}(\I)}^2=\mu$, this entails that $E(\tau \widetilde{U})<E(U)$, which is a contradiction. It is then left to estimate $\omega^\star$. Using \eqref{eq:omega-star} and \eqref{bound3}, it follows that
\begin{multline*}
-\omega^\star=\f{Q(U)-\|u\|_{L^{p}(\Rp)}^{p}-\|v\|_{L^{r}(\Rd)}^{r}}{\mu}\\[.2cm]
=\f{2E(U)-\f{p-2}{p}\|u\|_{L^{p}(\Rp)}^{p}-\f{r-2}{r}\|v\|_{L^{r}(\Rd)}^{r}}{\mu}<2\f{\Eps(\mu)}{\mu}<-E_{\rm{lin}}.
\end{multline*}
\end{proof}

As a consequence of Lemma \ref{gs->min}, in order to prove Theorem \ref{thm:4} it is relevant to investigate the features of the action minimizers at frequency $\omega >E_{\rm{lin}}$.

In order to study the features of the action minimizers at frequency $\omega$, we have to introduce two equivalent minimization problems. Preliminarily, we introduce the quantities
\begin{equation}
\widetilde{S}(U):=\f{p-2}{2p}\|u\|_{L^{p}(\Rp)}^{p}+\f{r-2}{2r}\|v\|_{L^{r}(\Rd)}^{r}
\end{equation}
and
\begin{equation}
A_{\omega}(U):=\f{r-2}{2r}Q_{\omega}(U)+\f{p-r}{pr}\|u\|_{L^{p}(\Rp)}^{p}.
\end{equation}
Both functionals are well defined on $\D$ and, using \eqref{I-om}, there results
\begin{equation}
\label{equiv-form-Som}
S_{\omega}(U)=\f{1}{2}I_{\omega}(U)+\widetilde{S}(U)=\f{1}{r}I_{\omega}(U)+A_{\omega}(U),\qquad  U \in\D,
\end{equation}
so that
\begin{equation}
\label{equiv-form-Som2}
 S_{\omega}(U)=\widetilde{S}(U)=A_{\omega}(U),\qquad \, U \in N_\omega
\end{equation}
which implies 
\begin{equation}
\label{equiv-form-d}
d(\omega)=\inf_{N_{\omega}}\widetilde{S}=\inf_{N_{\omega}}A_{\omega}.
\end{equation}
In other words, the problems of the minimization of $S_\omega$, $\widetilde{S}$, and $A_\omega$ are equivalent on $N_\omega$. In the following two lemmas we show that they are  equivalent also when the minimization domains for $\widetilde{S}$ and $A_\omega$ are suitably modified.

\begin{lemma}
\label{lem:dmu-alt}
Let $p,\,r>2$, $\beta>0$ and $\omega>E_{\rm{lin}}$. Then
\begin{equation}
\label{d-om-alt}
d(\omega)=\inf_{\widetilde{N}_{\omega}}\widetilde{S},\qquad\text{with}\qquad\widetilde{N}_{\omega}:=\{U\in \D\setminus\{0\}\,:\,I_{\omega}(U)\leq 0\}.
\end{equation}
Moreover
\begin{equation}
\label{eq-secondequiv}
\left\{\begin{array}{l}\displaystyle\widetilde{S}(U)=d(\omega)\\[.2cm]\displaystyle U\in \widetilde{N}_\omega\end{array}\right.\qquad\Longleftrightarrow\qquad \left\{\begin{array}{l}\displaystyle S_{\omega}(U)=d(\omega)\\[.2cm]U\in N_\omega.\end{array}\right.
\end{equation}
\end{lemma}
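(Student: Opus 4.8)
The plan is to establish the equality $d(\omega)=\inf_{\widetilde N_\omega}\widetilde S$ by proving the two inequalities separately, and then to deduce \eqref{eq-secondequiv} as a by-product of the rescaling argument used for one of them. Since $N_\omega\subset\widetilde N_\omega$, from \eqref{equiv-form-d} one has immediately $\inf_{\widetilde N_\omega}\widetilde S\le\inf_{N_\omega}\widetilde S=d(\omega)$, so it only remains to show $\widetilde S(U)\ge d(\omega)$ for every $U\in\widetilde N_\omega$. The idea is the classical fibering rescaling: given $U=(u,v)\in\widetilde N_\omega$, I would look for $\tau^\star\in(0,1]$ such that $\tau^\star U\in N_\omega$, and then compare $\widetilde S(\tau^\star U)$ with $\widetilde S(U)$.

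To make the rescaling work I would first record two positivity facts, valid for every $U\ne(0,0)$. First, by the very definition \eqref{Elin} of $E_{\rm lin}$ one has $Q(U)\ge -E_{\rm lin}\|U\|_{L^2(\I)}^2$, hence $Q_\omega(U)=Q(U)+\omega\|U\|_{L^2(\I)}^2\ge(\omega-E_{\rm lin})\|U\|_{L^2(\I)}^2>0$, which is precisely where the hypothesis $\omega>E_{\rm lin}$ is used. Second, $\|u\|_{L^p(\Rp)}^p+\|v\|_{L^r(\Rd)}^r>0$: indeed if $\|v\|_{L^r(\Rd)}=0$ then $v\equiv 0$ as an element of the energy space, so together with $\|u\|_{L^p(\Rp)}=0$ this would force $U=(0,0)$. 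With these at hand, the map $g(\tau):=\tau^{-2}I_\omega(\tau U)=Q_\omega(U)-\tau^{p-2}\|u\|_{L^p(\Rp)}^p-\tau^{r-2}\|v\|_{L^r(\Rd)}^r$ is continuous and, since $p,r>2$, strictly decreasing on $(0,+\infty)$, with $g(0^+)=Q_\omega(U)>0$ and $g(1)=I_\omega(U)\le 0$. Hence there is a unique $\tau^\star\in(0,1]$ with $I_\omega(\tau^\star U)=0$, i.e.\ $\tau^\star U\in N_\omega$, and $\tau^\star=1$ exactly when $I_\omega(U)=0$. Since $\widetilde S(\tau U)=\tfrac{p-2}{2p}\tau^p\|u\|_{L^p(\Rp)}^p+\tfrac{r-2}{2r}\tau^r\|v\|_{L^r(\Rd)}^r$ and $\tau^\star\le 1$ with $p,r>2$, we get $\widetilde S(\tau^\star U)\le\widetilde S(U)$; therefore $d(\omega)\le\widetilde S(\tau^\star U)\le\widetilde S(U)$ by \eqref{equiv-form-d}, which proves \eqref{d-om-alt}.

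For \eqref{eq-secondequiv}, the implication $(\Leftarrow)$ is immediate from \eqref{equiv-form-Som2} together with $N_\omega\subset\widetilde N_\omega$. For $(\Rightarrow)$, suppose $U\in\widetilde N_\omega$ satisfies $\widetilde S(U)=d(\omega)$ but $I_\omega(U)<0$. Then the $\tau^\star$ constructed above satisfies $\tau^\star<1$ strictly, and since $\|u\|_{L^p(\Rp)}^p+\|v\|_{L^r(\Rd)}^r>0$ the inequality $\widetilde S(\tau^\star U)<\widetilde S(U)=d(\omega)$ is strict, contradicting $\tau^\star U\in N_\omega$ and \eqref{equiv-form-d}. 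Hence $I_\omega(U)=0$, i.e.\ $U\in N_\omega$, and then $S_\omega(U)=\widetilde S(U)=d(\omega)$ by \eqref{equiv-form-Som2}.

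I do not expect a serious obstacle here: the argument is the standard Nehari/fibering rescaling, and the only points requiring a little care are the two positivity facts — the coercivity-type bound $Q_\omega(U)>0$, which is exactly the role played by the assumption $\omega>E_{\rm lin}$, and the nonvanishing of the nonlinear term for $U\ne(0,0)$. The presence of the two different powers $p$ and $r$ costs nothing, since both exponents $p-2$ and $r-2$ are positive, so $g$ is still strictly monotone and the comparison $\widetilde S(\tau^\star U)\le\widetilde S(U)$ for $\tau^\star\le 1$ goes through unchanged.
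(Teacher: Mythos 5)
Your proposal is correct and follows essentially the same route as the paper: the inclusion $N_\omega\subset\widetilde N_\omega$ gives one inequality, and the fibering rescaling $\tau\mapsto\tau U$ with $Q_\omega(U)>0$ (from $\omega>E_{\rm lin}$) and $p,r>2$ produces $\tau^\star\in(0,1]$ with $\tau^\star U\in N_\omega$ and $\widetilde S(\tau^\star U)\le\widetilde S(U)$, which yields both \eqref{d-om-alt} and, by the strict inequality when $I_\omega(U)<0$, the equivalence \eqref{eq-secondequiv}. The only cosmetic difference is that you verify the positivity of the nonlinear term and the uniqueness of $\tau^\star$ explicitly, which the paper leaves implicit.
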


\begin{proof}
First we prove \eqref{d-om-alt}. On the one hand, since $N_\omega\subset\widetilde{N}_\omega$, combining \eqref{equiv-form-Som2} and \eqref{equiv-form-d} there results
\begin{equation*}
d(\omega)\geq\inf_{\widetilde{N}_{\omega}}\widetilde{S}.
\end{equation*}
On the other hand, let $U\in \D\setminus\{0\}$ satisfy $I_{\omega}(U)<0$. For $\tau>0$
\begin{equation*}
I_{\omega}(\tau U)=\tau^{2}Q_{\omega}(U)-\tau^{p}\|u\|_{L^{p}(\Rp)}^{p}-\tau^{r}\|v\|_{L^{r}(\Rd)}^{r}.
\end{equation*}  
In particular, since $p,\,r>2$ and since $\omega>E_{\rm{lin}}$ implies $Q_{\omega}(U)>0$, there exists $\tau^\star\in (0,1)$ such that $I_{\omega}(\tau^\star U)=0$. In addition
\begin{equation*}
\widetilde{S}(\tau^\star U)<\widetilde{S}(U)
\end{equation*}
and thus, by \eqref{equiv-form-d}
\begin{equation*}
d(\omega)\leq \inf_{\widetilde{N}_{\omega}}\widetilde{S}.
\end{equation*}

Finally we prove \eqref{eq-secondequiv}. By \eqref{equiv-form-Som2}, if $U\in N_{\omega}$ and $S_{\omega}(U)=d(\omega)$, then $U\in \widetilde{N}_{\omega}$ and $\widetilde{S}(U)=d(\omega)$. Moreover, if $U\in \widetilde{N}_{\omega}\setminus N_{\omega}$ and $\widetilde{S}(U)=d(\omega)$ then, arguing as before, there exists $\tau^\star\in (0,1)$ such that $\tau^\star U\in N_{\omega}$ and $S_{\omega}(\tau^\star U)<d(\omega)$, which contradicts \eqref{d-om-alt}. Therefore, if $U\in\widetilde{N}(\omega)$ and $\widetilde{S}(U)=d(\omega)$, then $U\in N_{\omega}$ and $S_{\omega}(U)=d(\omega)$.
\end{proof}

\begin{lemma}
\label{lem:thirdform}
Let $p,\,r>2$, $\beta>0$ and $\omega>E_{\rm{lin}}$.
Then,
\begin{equation}
\label{eq-levA}
d(\omega)=\inf_{M_\omega}A_{\omega},\qquad\text{with}\qquad M_\omega:=\left\{U\in \D\,:\,\widetilde{S}(U)=d(\omega)\right\}.
\end{equation}
Moreover, whenever $d(\omega)>0$,
\begin{equation}
\label{eq-thirdequiv}
\left\{\begin{array}{l}\displaystyle A_{\omega}(U)=d(\omega)\\[.2cm]\displaystyle U\in M_\omega\end{array}\right.\qquad\Longleftrightarrow\qquad \left\{\begin{array}{l}\displaystyle S_{\omega}(U)=d(\omega)\\[.2cm]U\in N_\omega.\end{array}\right.
\end{equation}
\end{lemma}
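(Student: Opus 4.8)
The plan is to run a second reduction in the spirit of Lemma~\ref{lem:dmu-alt}, exploiting the two expressions for $S_\omega$ in \eqref{equiv-form-Som}. Subtracting them gives, for every $U\in\D$,
\[
A_\omega(U)=\widetilde S(U)+\Big(\frac12-\frac1r\Big)I_\omega(U)=\widetilde S(U)+\frac{r-2}{2r}\,I_\omega(U),
\]
so on $M_\omega$, where $\widetilde S(U)=d(\omega)$, one has $A_\omega(U)=d(\omega)+\frac{r-2}{2r}I_\omega(U)$. Hence both \eqref{eq-levA} and \eqref{eq-thirdequiv} reduce to controlling the sign of $I_\omega$ on $M_\omega$, and I would organize the argument around that single identity.

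For the bound $\inf_{M_\omega}A_\omega\geq d(\omega)$ I would show that $I_\omega(U)\geq0$ for every $U\in M_\omega$. Indeed, if $I_\omega(U)<0$ then $U\in\widetilde N_\omega$; since $\widetilde S(U)=d(\omega)=\inf_{\widetilde N_\omega}\widetilde S$ by \eqref{d-om-alt}, the equivalence \eqref{eq-secondequiv} forces $U\in N_\omega$, i.e.\ $I_\omega(U)=0$, a contradiction. (Alternatively one may re-run the rescaling $\tau\mapsto\tau U$ of Lemma~\ref{lem:dmu-alt}: some $\tau^\star\in(0,1)$ would give $\tau^\star U\in N_\omega$ with $\widetilde S(\tau^\star U)<\widetilde S(U)=d(\omega)$, contradicting $\widetilde S\geq d(\omega)$ on $N_\omega$ from \eqref{equiv-form-d}.) With $I_\omega\geq0$ on $M_\omega$, the displayed identity yields $A_\omega(U)\geq d(\omega)$ there.

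For the reverse bound $\inf_{M_\omega}A_\omega\leq d(\omega)$, which I expect to be the one genuine obstacle — action minimizers are not yet known to exist, so one cannot simply evaluate $A_\omega$ at a minimizer — I would transport a minimizing sequence onto $M_\omega$. Take $U_n=(u_n,v_n)\in N_\omega$ with $S_\omega(U_n)\to d(\omega)$; by \eqref{equiv-form-Som2} also $\widetilde S(U_n)\to d(\omega)$, and $\widetilde S(U_n)\geq d(\omega)$ by \eqref{equiv-form-d}. Since $d(\omega)>0$ (Gagliardo--Nirenberg bounds the $L^p(\Rp)\oplus L^r(\Rd)$ norm on $N_\omega$ away from $0$, because $\omega>E_{\rm{lin}}$ makes $Q_\omega$ coercive), the continuous nondecreasing map $\tau\mapsto\widetilde S(\tau U_n)$ attains $d(\omega)$ at some $\tau_n\in(0,1]$, so $\tau_nU_n\in M_\omega$. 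Squeezing $\widetilde S(\tau_nU_n)$ between $\tau_n^{\max(p,r)}\widetilde S(U_n)$ and $\tau_n^{\min(p,r)}\widetilde S(U_n)$ gives $\tau_n\to1$; since $\|u_n\|_{L^p(\Rp)}^p$ and $\|v_n\|_{L^r(\Rd)}^r$ are bounded by $\widetilde S(U_n)$ and, $U_n$ being in $N_\omega$, $I_\omega(\tau_nU_n)=(\tau_n^2-\tau_n^p)\|u_n\|_{L^p(\Rp)}^p+(\tau_n^2-\tau_n^r)\|v_n\|_{L^r(\Rd)}^r\to0$, the identity gives $A_\omega(\tau_nU_n)=d(\omega)+\frac{r-2}{2r}I_\omega(\tau_nU_n)\to d(\omega)$. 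This proves \eqref{eq-levA}.

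Finally, \eqref{eq-thirdequiv} follows from the same identity. If $U\in N_\omega$ with $S_\omega(U)=d(\omega)$, then $\widetilde S(U)=A_\omega(U)=d(\omega)$ by \eqref{equiv-form-Som2}, so $U\in M_\omega$ realizes $\inf_{M_\omega}A_\omega$. Conversely, if $U\in M_\omega$ and $A_\omega(U)=d(\omega)$, the identity forces $I_\omega(U)=0$, hence $U\in N_\omega$ (and $U\neq0$ because $\widetilde S(U)=d(\omega)>0$), and then $S_\omega(U)=\widetilde S(U)=d(\omega)$ by \eqref{equiv-form-Som2}.
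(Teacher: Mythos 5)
Your proposal is correct, and its skeleton coincides with the paper's: both rest on the identity $A_\omega(U)=\widetilde S(U)+\frac{r-2}{2r}I_\omega(U)$ and on showing $I_\omega\geq 0$ on $M_\omega$ via Lemma \ref{lem:dmu-alt}, which gives $A_\omega\geq d(\omega)$ on $M_\omega$ and, once the infimum is identified, both directions of \eqref{eq-thirdequiv} by the same bookkeeping. Where you genuinely diverge is in the upper bound $\inf_{M_\omega}A_\omega\leq d(\omega)$: the paper simply starts from an action minimizer $U$ at frequency $\omega$, notes $U\in M_\omega$ by \eqref{equiv-form-Som2} and $A_\omega(U)=d(\omega)$, so its proof of \eqref{eq-levA} tacitly presupposes that an action minimizer exists — harmless for the paper, since the lemma is only ever invoked for ground states, which Lemma \ref{gs->min} shows are action minimizers. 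You instead rescale a minimizing sequence $U_n\in N_\omega$ by $\tau_n\in(0,1]$ so that $\tau_nU_n\in M_\omega$, show $\tau_n\to1$ and $I_\omega(\tau_nU_n)\to0$, and conclude $A_\omega(\tau_nU_n)\to d(\omega)$; this makes \eqref{eq-levA} unconditional, at the price of needing $d(\omega)>0$. That fact is true under the stated hypotheses but is nowhere proved in the paper and does require a short argument: coercivity of $Q_\omega$ for $\omega>E_{\rm{lin}}$ in the full form norm (controlling $\|u'\|_{L^2(\Rp)}$, $\|\nabla\phi_\lambda\|_{L^2(\Rd)}$ and $|q|$, as in Appendix \ref{sec:operatorES}) combined with the Gagliardo--Nirenberg estimates \eqref{gn1} and \eqref{pre-gn2} to bound $Q_\omega$ away from zero on $N_\omega$; your parenthetical sketch names the right ingredients, but this is the one step you should write out. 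As a side benefit, your explicit $d(\omega)>0$ also justifies $U\neq0$ in the implication from $M_\omega$ back to $N_\omega$, a point the paper leaves implicit.
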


\begin{proof}
Let us start by proving \eqref{eq-levA}. It is not restrictive to take $d(\omega)>0$ as the case $d(\omega)=0$ is trivial for \eqref{eq-levA}.

Preliminarily, we observe that $I_\omega(W)\geq 0$ for any $W\in M_\omega$. Indeed, assume by contradiction that $I_{\omega}(W)<0$. By Lemma
\ref{lem:dmu-alt}, $W$ cannot be a minimizer of $\widetilde{S}$ on
$\widetilde{N}_{\omega}$. Hence $\widetilde{S}(W)>d(\omega)$, which
contradicts the assumption $W\in M_\omega$.

Now, let $(U_n)_n$ be a minimizing sequence for $A_\omega$ on $M_\omega$, namely
$\widetilde{S}(U_n)=d(\omega)$ and
$A_\omega(U_n)\to \inf_{M_\omega}A_\omega$ as $n\to+\infty$.
Since $I_\omega(U_n)\geq 0$, using the second
equality in \eqref{equiv-form-Som}, we obtain
\begin{equation*}
	0\leq \left(\frac{1}{2}-\frac{1}{r}\right)I_\omega (U_n)
	=A_\omega(U_n)-\widetilde{S}(U_n)
	=\inf_{M_\omega} A_\omega+o(1)-d(\omega),
	\qquad\text{as}\quad n\to+\infty,
\end{equation*}
which yields $d(\omega)\leq \inf_{M_\omega} A_\omega$.

Conversely, let $(W_n)_n=\big((u_n,v_n)\big)_n$ be a minimizing sequence for $S_\omega$ on
$N_\omega$, namely $I_\omega(W_n)=0$ and
$S_\omega(W_n)\to d(\omega)$ as $n\to+\infty$. Using Lemma \ref{lem:dmu-alt} and $d(\omega)>0$, one can see that, for every $n$, there exists $\theta_n\in(0,1)$, with $\theta_n\to 1$ as $n\to+\infty$,
such that $\widetilde{S}(\theta_n W_n)=d(\omega)$. As a consequence,
$\theta_n W_n\in M_\omega$, which implies that
$A_\omega(\theta_n W_n)\geq \inf_{M_\omega} A_\omega$. Moreover, we have
\begin{equation*}
	0\leq I_\omega(\theta_n W_n)
	=\theta_n^2I_\omega(W_n)
	+(\theta_n^2-\theta_n^p)\|u_n\|_p^p
	+(\theta_n^2-\theta_n^r)\|v_n\|_r^r
	\leq C\left(\theta_n^2-\theta_n^{\max\{p,r\}}\right)
	\widetilde{S}(W_n)
\end{equation*}
and thus $I_\omega(\theta_n W_n)\to 0$ as $n\to+\infty$.
Combining this with the first equality in \eqref{equiv-form-Som}, we get
$S_\omega(\theta_n W_n)\to d(\omega)$ as $n\to+\infty$, and, using again
\eqref{equiv-form-Som}, we find that
\begin{equation*}
	0\leq\frac{1}{r}I_\omega(\theta_n W_n)
	=S_\omega(\theta_n W_n)-A_\omega(\theta_n W_n)
	\leq S_\omega(\theta_n W_n)-\inf_{M_\omega} A_\omega,
\end{equation*}
which gives $d(\omega)\geq \inf_{M_\omega} A_\omega$ and concludes the
proof of \eqref{eq-levA}.

We now prove \eqref{eq-thirdequiv}. If $S_\omega(U)=d(\omega)$ and
$U\in N_\omega$, then, by the equality between the first and the last
term in \eqref{equiv-form-Som}, it follows that $U\in M_\omega$ and
$A_\omega(U)=d(\omega)$. Conversely, if $U\in M_\omega$ and
$A_\omega(U)=d(\omega)$, then the second equality in
\eqref{equiv-form-Som} implies that $U\in N_\omega$. Combining this with
$U\in M_\omega$, we also obtain $S_\omega(U)=d(\omega)$, and the proof is
complete.
\end{proof}

\begin{remark}
Note that assuming $d(\omega)>0$ before \eqref{eq-thirdequiv} is not actually restrictive for our purposes because it is always satisfied in our application due to Lemma \ref{gs->min}.
\end{remark}

\begin{remark}
\label{rem-problemi}
 The idea of deriving two minimization problems equivalent to the minimization of the action functional on the Nehari's manifold is borrowed by \cite{ABCT-22}. However, in that case there is a quadratic strictly positive functional in place of $A_\omega$, which makes all the subsequent steps easier. Let us notice that the superquadratic nature of $A_\omega$ is ascribable to the presence of different nonlinearities on the half-line and on the plane.
\end{remark}

Now we can discuss the positivity of the restriction to the plane of the action minimizers.

\begin{lemma}
\label{phiom-pos}
Let $p,\,r>2$, $\beta>0$ and $\omega>E_{\rm{lin}}$. Let $U=(u,v)$ be an action minimizer at frequency $\omega$ and, given the decomposition $v=\phi_{\la}+q\G_{\la}$, assume that $q>0$ and $\phi_\la\not\equiv0$ for every $\la>0$. Then, $v(\x)>0$ for every $\x\in\Rd$. In particular, $\phi_{\la}(\x)>0$ for every $\x\in\Rd$ and every $\lambda\geq\omega$.
\end{lemma}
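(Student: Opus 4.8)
The plan is to run a rearrangement argument built on the mountain‑pass description of the level $d(\omega)$. Since $\omega>E_{\rm{lin}}$ one has $Q_\omega(W)>0$ for every $W\in\D\setminus\{0\}$, so on each ray $\tau\mapsto S_\omega(\tau W)$, $\tau>0$, the map has a strict maximum at the unique $\tau^\star(W)>0$ for which $\tau^\star(W)W\in N_\omega$; hence $d(\omega)=\inf_{W\neq0}\max_{\tau>0}S_\omega(\tau W)$ and $\tau^\star(U)=1$ because $U\in N_\omega$. Fix $\lambda>\max\{\omega,0\}$ and take as competitor $U^\ast:=(u,v^\ast)$ with $v^\ast:=|\phi_\la|^\ast+q\G_\la$, where $|\phi_\la|^\ast$ is the symmetric decreasing rearrangement of $|\phi_\la|$; thus $v^\ast\in\V$, with the same half‑line component $u$ and the same charge $q>0$ as $v$.

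First I would compare the two rays. Writing the planar quadratic part through the $\la$‑decomposition as in \eqref{quadratic2} and absorbing the mass term $\omega\|v\|_{L^2(\Rd)}^2$, the coefficient of $\|v\|_{L^2(\Rd)}^2$ becomes $\omega-\la<0$; since $\||\phi_\la|^\ast\|_{L^2}=\|\phi_\la\|_{L^2}$, $\|\na|\phi_\la|^\ast\|_{L^2}\leq\|\na|\phi_\la|\|_{L^2}\leq\|\na\phi_\la\|_{L^2}$ (P\'olya--Szeg\H{o} and the diamagnetic inequality) and $\|v^\ast\|_{L^2}\geq\|v\|_{L^2}$ (from $\int|\phi_\la|^\ast\G_\la\geq\int|\phi_\la|\G_\la\geq\int(\Re\phi_\la)\G_\la$ and $q>0$), every term of $Q_\omega$ that changes does so in the favourable direction, so $Q_\omega(U^\ast)\leq Q_\omega(U)$. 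Moreover $|v|\leq|\phi_\la|+q\G_\la$, and by $L^r$--$L^{r'}$ duality and the Hardy--Littlewood inequality $\|\,|\phi_\la|+q\G_\la\,\|_{L^r}\leq\|\,|\phi_\la|^\ast+q\G_\la\,\|_{L^r}=\|v^\ast\|_{L^r}$, hence $\|v^\ast\|_{L^r}\geq\|v\|_{L^r}$; since $\|u\|_{L^p(\Rp)}$ is unchanged, these two inequalities give $S_\omega(\tau U^\ast)\leq S_\omega(\tau U)$ for every $\tau>0$. Therefore $\max_{\tau>0}S_\omega(\tau U^\ast)\leq\max_{\tau>0}S_\omega(\tau U)=d(\omega)$; as the left-hand side is also $\geq d(\omega)$, equality holds, and evaluating the ray inequality at $\tau=\tau^\star(U^\ast)$ forces $\tau^\star(U^\ast)=1$ by uniqueness of the maximiser. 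Consequently $U^\ast$ is itself an action minimiser and both inequalities are equalities: $Q_\omega(U^\ast)=Q_\omega(U)$ and $\|v^\ast\|_{L^r}=\|v\|_{L^r}$.

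Next I would read off positivity from the equality cases. From $Q_\omega(U^\ast)=Q_\omega(U)$ and $\la>\omega$ one gets separately $\|\na|\phi_\la|^\ast\|_{L^2}=\|\na\phi_\la\|_{L^2}$ and $\|v^\ast\|_{L^2}=\|v\|_{L^2}$; the second, read along the chain of inequalities above and using $\G_\la>0$, forces $|\phi_\la|=\Re\phi_\la$ a.e.\ on $\Rd$, i.e.\ $\phi_\la\geq0$ and real a.e., hence everywhere by continuity (recall $\phi_\la\in H^2(\Rd)$ by Lemma \ref{lem:EL-eq}). Thus $v=\phi_\la+q\G_\la\geq q\G_\la>0$ on $\Rd\setminus\{0\}$ (and $v\to+\infty$ at the origin), which is the asserted $v(\x)>0$. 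Finally, for any $\la\geq\omega$, rewriting \eqref{eq:plane} (with $\omega^\star$ replaced by $\omega$ as in Remark \ref{rem-bs}) as $(-\lap+\la)\phi_\la=|v|^{r-2}v+(\la-\omega)v$ exhibits a strictly positive right-hand side by the previous step; since on $\Rd$ the operator $(-\lap+\la)^{-1}$ acts by convolution with the strictly positive kernel $\G_\la(\x-\y)$, we conclude $\phi_\la(\x)>0$ for every $\x\in\Rd$.

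The main obstacle is the bookkeeping that identifies $Q_\omega$ --- not $Q$, and not $S_\omega$ restricted to $N_\omega$ --- as the functional that decreases under this rearrangement: this is precisely why the Macdonald parameter must be taken $\geq\omega$, so that after absorbing the mass term the coefficient of $\|v\|_{L^2(\Rd)}^2$ is non-positive and no term moves the wrong way; and then the equality analysis of the $L^2$-norm comparison, in which the strict positivity $K_0>0$ is decisive, is what upgrades ``$v^\ast$ is also a minimiser'' to ``$\phi_\la\geq0$''. Note that, unlike in \cite{ABCT-22}, we never minimise the superquadratic functional $A_\omega$ (cf.\ Remark \ref{rem-problemi}); using the ray-maximisation description of $d(\omega)$ directly sidesteps that difficulty.
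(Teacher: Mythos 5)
Your proof is correct, but it follows a genuinely different route from the paper's. The paper works with the auxiliary functionals $\widetilde{S}$ and $A_\omega$ and the equivalent minimization problems on $\widetilde{N}_\omega$ and $M_\omega$ (Lemmas \ref{lem:dmu-alt} and \ref{lem:thirdform}), takes only the modulus $|\phi_\omega|$ with the choice $\la=\omega$, and reaches a contradiction through a one-parameter interpolation $\tau|\phi_\omega|+q\G_\omega$, $\tau\in(-1,1)$, where the degenerate case $\theta\equiv\pi$ requires the Euler--Lagrange identity \eqref{EL-appl}; positivity and radial symmetry are then treated in two separate lemmas. You instead use the standard Nehari/ray characterization $d(\omega)=\inf_{W\neq0}\max_{\tau>0}S_\omega(\tau W)$, valid because $\omega>E_{\rm{lin}}$ gives $Q_\omega>0$ on $\D\setminus\{0\}$, and compare $U$ with the single competitor $(u,|\phi_\la|^\ast+q\G_\la)$ for a fixed $\la>\omega$: since $Q_\omega$ does not increase (P\'olya--Szeg\H{o}, diamagnetic, and the sign $(\omega-\la)<0$ of the $\|v\|_{L^2}^2$ coefficient) and $\|v\|_{L^r}$ does not decrease (duality plus Hardy--Littlewood), $S_\omega$ decreases along every ray, and the sandwich $d(\omega)\leq\max_\tau S_\omega(\tau U^\ast)\leq\max_\tau S_\omega(\tau U)=d(\omega)$ together with uniqueness of the ray maximizer forces equality in both ingredients; the equality $\|v^\ast\|_{L^2}=\|v\|_{L^2}$ (here the strict inequality $\la>\omega$ is essential, which is why you cannot take $\la=\omega$ as the paper does) combined with $q>0$ and $\G_\la>0$ yields $\phi_\la\geq0$, and the resolvent identity $(-\lap+\la)\phi_\la=|v|^{r-2}v+(\la-\omega)v$ with strictly positive kernel $\G_\la$ upgrades this to strict positivity for every $\la\geq\omega$, replacing the paper's appeal to the Strong Maximum Principle and to \cite[Remark 2.1]{ABCT-22}. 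What your approach buys is that it sidesteps entirely the superquadraticity obstacle of $A_\omega$ highlighted in Remarks \ref{rem-problemi} and \ref{rem-problemi2} (no renormalization within $M_\omega$, no special treatment of the antipodal-phase case), and it even delivers the equality $\|\na|\phi_\la|^\ast\|_{L^2}=\|\na\phi_\la\|_{L^2}$, which points toward the symmetry statement of Lemma \ref{v-sym}; what the paper's scheme buys is a framework (the $M_\omega$ problem) that is reused verbatim for the rearrangement argument of Lemma \ref{v-sym}. Two cosmetic points: the regularity $\phi_\la\in H^2(\Rd)$ for an \emph{action minimizer} should be quoted via Remark \ref{rem-bs} (the Euler--Lagrange argument of Lemma \ref{lem:EL-eq} applied at frequency $\omega$) rather than Lemma \ref{lem:EL-eq} itself, which is stated for mass ground states; and since $\omega>E_{\rm{lin}}>0$, writing $\la>\max\{\omega,0\}$ is redundant.
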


\begin{proof}
By \cite[Remark 2.1]{ABCT-22}, it is sufficient to prove the positivity of $\phi_\omega$. Then, let $U=(u,v)$ be an action minimizer at frequency $\omega$, and consider the decomposition $v=\phi_{\omega}+q\G_{\omega}$. By Lemma \ref{lem:thirdform}, $U$ is also a minimizer of $A_{\omega}$ on $M_\omega$.

Now, define $\Omega:=\{\x\in\Rd\setminus\{\z\}\,:\, \phi_{\omega}(\x)\neq 0\}$ and recall that $|\Omega|>0$. Hence, we can write $\phi_{\omega}(x)=e^{i\theta(\x)}|\phi_{\omega}(\x)|$, for every $\x\in \Rd\setminus\{0\}$, for some $\theta:\Omega\to[0,2\pi)$. Define, also, $\widehat{\Omega}:=\{\x\in\Omega\,:\, \theta(\x)\neq 0\}$ and assume, by contradiction, that $|\widehat{\Omega}|>0$. Consider the function $\widetilde{U}=(u,\widetilde{v})$, with $\widetilde{v}:=|\phi_{\omega}|+q\G_{\omega}$. On the one hand, since $\big|\na \phi_{\omega}(\x)\big|^{2}\geq\big|\na|\phi_{\omega}(\x)|\big|^{2}$ for a.e. $\x\in\R^2$, using the choice $\lambda=\omega$, we have that $Q_{\omega}(\widetilde{U})\leq Q_{\omega}(U)$, so that $A_{\omega}(\widetilde{U})\leq A_{\omega}(U)=d(\omega)$. On the other hand,
\begin{equation*}
|v(\x)|^{2}=|\phi_{\omega}(\x)|^{2}+q^{2}\G_{\omega}^2(\x)+2q\cos\big(\theta(\x)\big)|\phi_{\omega}(\x)|\G_{\omega}(\x)<\left|\widetilde{v}(\x)\right|^2\qquad\forall\,\x\in \widehat{\Omega},
\end{equation*} 
and thus, as $|\widehat{\Omega}|>0$, $\|\widetilde{v}\|_{L^{r}(\Rd)}>\|v\|_{L^{r}(\Rd)}$, whence
\begin{equation}
\label{p+r<dom}
\widetilde{S}(\widetilde{U})>\widetilde{S}(U)=d(\omega).
\end{equation}
Now, consider $\widetilde{U}_{\tau}=(u,\widetilde{v}_{\tau})$, with $\widetilde{v}_{\tau}:=\tau |\phi_\omega|+q\G_{\omega}$. It is straightforward that $A_{\omega}(\widetilde{U}_{\tau})<A_\omega(\widetilde{U})$, for every $\tau\in(-1,1)$. Note, also, that the function $\tau\mapsto \|\widetilde{v}_{\tau}\|_{L^{r}(\Rd)}^{r}$, given by
\begin{equation*}
\|\widetilde{v}_{\tau}\|_{L^{r}(\Rd)}^{r}=\int_{\Rd}\left|\tau^{2}|\phi_{\omega}(\x)|^{2}+q^{2}\G_{\omega}^2(\x)+2\tau q|\phi_{\omega}(\x)|\G_{\omega}(\x)\right|^{\f{r}{2}}\,d\x,
\end{equation*}
is continuous on $[-1,1]$. Assume, first, that $\theta(\x)\neq\pi$ for a.e. $\x\in\Omega$. Therefore,
\begin{equation*}
\|\widetilde{v}_{-1}\|_{L^{r}(\Rd)}^{r}<\|v\|_{L^{r}(\Rd)}^{r}<\|\widetilde{v}\|_{L^{r}(\Rd)}^{r},
\end{equation*}
and, hence, by the continuity of $\tau\mapsto\|\widetilde{v}_{\tau}\|_{L^{r}(\Rd)}^{r}$, there exists $\tau^\star\in (-1,1)$ such that
\begin{equation*}
\|\widetilde{v}_{\tau^\star}\|_{L^{r}(\Rd)}^{r}=\|v\|_{L^{r}(\Rd)}^{r}.
\end{equation*}
Assume, then, that $\theta(\x)=\pi$ for a.e. $\x\in\Omega$, i.e. that $v(x)=-|\phi_{\omega}(x)|+q\G_{\omega}(x)$ for a.e. $\x\in\Omega$. Since
\begin{equation*}
\left|\f{d|\widetilde{v}_{\tau}|^{r}}{d\tau}\right|=\left|r|\widetilde{v}_{\tau}|^{r-2}\widetilde{v}_{\tau}|\phi_{\omega}|\right|\leq r|\widetilde{v}|^{r-1}|\phi_{\omega}|\in L^{1}(\Rd)\qquad \forall\tau \in [-1,1],
\end{equation*}
by dominated convergence,
\begin{multline}
\label{d-dtau-vtau}
{\f{d}{d\tau}\|\widetilde{v}_{\tau}\|_{L^{r}(\Rd)}^{r}}_{\big|_{\tau=-1^{+}}}=r\int_{\Rd}\left|\widetilde{v}_{-1}(\x)\right|^{r-2}\widetilde{v}_{-1}(\x)|\phi_{\omega}(\x)|\,d\x\\
=r\int_{\Rd}\left|v(\x)\right|^{r-2}v(\x)|\phi_{\omega}(\x)|\,d\x.
\end{multline}
Moreover, by Remark \ref{rem-bs} $U$ satisfies \eqref{eq:plane}, namely it solves \eqref{EL-plane}. Now, setting $\chi=|\phi_{\omega}|$ and $\la=\omega$ in \eqref{EL-plane}, we get
\begin{equation}
\label{EL-appl}
\int_{\Rd}|v(\x)|^{r-2}v(\x)|\phi_{\omega}(\x)|\,d\x=-\int_{\R^{d}}\left|\na|\phi_{\omega}(\x)|\right|^{2}\,d\x-\omega\int_{\Rd}\left|\phi_{\omega}(\x)\right|^{2}\,d\x
\end{equation}
and thus, combined with \eqref{d-dtau-vtau}, there results
\begin{equation*}
{\f{d}{d\tau}\|\widetilde{v}_{\tau}\|_{L^{r}(\Rd)}^{r}}_{|\tau=-1^{+}}<0.
\end{equation*}
Since $\|\widetilde{v}_{1}\|_{L^{r}(\Rd)}^{r}>\|\widetilde{v}_{-1}\|_{L^{r}(\Rd)}^{r}=\|v\|_{L^{r}(\Rd)}^{r}$ and $\tau\mapsto\|\widetilde{v}_{\tau}\|_{L^{r}(\Rd)}^{r}$ is continuous, there exists $\tau^\star\in (-1,1)$ such that $\|\widetilde{v}_{\tau^\star}\|_{L^{r}(\Rd)}^{r}=\|v\|_{L^{r}(\Rd)}^{r}$.

Summing up, in both cases there exists $\widetilde{U}_{\tau^\star}=(u,\widetilde{v}_{\tau^\star})\in M_\omega$ such that $A_{\omega}(\widetilde{U})<A_{\omega}(U)$, but this contradicts the fact that $U$ is a minimizer of $A_{\omega}$ on $M_\omega$. Hence, $|\widetilde{\Omega}|>0$ is false and $\phi_{\omega}(\x)\geq 0$ for every $\x\in\Rd$.

It is, then, left to prove that in fact $\phi_{\omega}(\x)>0$ for every $\x\in\Rd$. Since $v$ solves \eqref{eq:plane}, $\phi_{\omega}$ satisfies
\begin{equation*}
(-\lap+\omega)\phi_{\omega}=|v|^{r-2}(\phi_{\omega}+q\G_{\omega}).
\end{equation*}
Therefore, as $\phi_\omega$ is nonnegative, $(-\lap+\omega)\phi_{\omega}\geq 0$ in $H^{-1}(\Rd)$ and so, as $\phi_{\omega}\not\equiv 0$ by Proposition \ref{prop:uv-neq-0}, by the Strong Maximum Principle (e.g., \cite[Theorem 3.1.2]{C-18}), there results in $\phi_{\omega}>0$ on $\Rd$, which concludes the proof.
\end{proof}

Finally, we can discuss the radial symmetry of the restriction to the plane of the action minimizers.

\begin{lemma}
\label{v-sym}
Let $p,\,r>2$, $\beta>0$ and $\omega>E_{\rm{lin}}$. Let $U=(u,v)$ be an action minimizer at frequency $\omega$ and, given the decomposition $v=\phi_{\la}+q\G_{\la}$, assume that $q>0$ and $\phi_\la>0$ for every $\la\geq\omega$. Then, $v$ is radially symmetric and decreasing along the radial direction. In particular, $\phi_{\la}$ is radially symmetric and decreasing along the radial direction for every $\lambda\geq\omega$.
\end{lemma}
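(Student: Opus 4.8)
The plan is to realise $U$ as a minimiser of $A_\omega$ over $M_\omega$ and to compare it with a radial competitor obtained by rearranging only the regular part $\phi_\omega$ of $v$; the half-line component $u$ stays fixed throughout, so only the planar part is involved. By Lemma~\ref{lem:thirdform}, $U$ minimises $A_\omega$ on $M_\omega$, and by Remark~\ref{rem-bs} its regular part solves \eqref{eq:plane} with $\lambda=\omega$, i.e. $-\lap\phi_\omega+\omega\phi_\omega=|v|^{r-2}v$ with $v=\phi_\omega+q\G_\omega$; in particular $\phi_\omega\in H^2(\Rd)\cap C(\Rd)$, it is positive and decays at infinity, so its symmetric decreasing rearrangement $\psi$ is well defined. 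For $\tau\in[0,1]$ I would set $\widetilde v_\tau:=\tau\psi+q\G_\omega$ (an element of $\V$ with charge $q$) and $\widetilde U_\tau:=(u,\widetilde v_\tau)$. Working with the $\lambda=\omega$ decomposition, in which the $v$-dependent part of $Q_\omega$ equals $\|\na\phi_\omega\|_{L^2(\Rd)}^2+\omega\|\phi_\omega\|_{L^2(\Rd)}^2$ plus charge terms, and using that rearrangement preserves the $L^2$ norm, the P\'olya--Szeg\H{o} inequality $\|\na\psi\|_{L^2(\Rd)}\le\|\na\phi_\omega\|_{L^2(\Rd)}$, and $\omega>E_{\rm{lin}}>0$, a direct computation from the definition of $A_\omega$ gives, for every $\tau\in[0,1]$,
\begin{multline*}
A_\omega(\widetilde U_\tau)=A_\omega(U)\\
-\f{r-2}{2r}\Big[(1-\tau^2)\big(\|\na\phi_\omega\|_{L^2(\Rd)}^2+\omega\|\phi_\omega\|_{L^2(\Rd)}^2\big)+\tau^2\big(\|\na\phi_\omega\|_{L^2(\Rd)}^2-\|\na\psi\|_{L^2(\Rd)}^2\big)\Big],
\end{multline*}
with nonnegative bracket.

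Next I would tune $\tau$ so that $\widetilde U_\tau$ sits exactly on $M_\omega$. Since $\|u\|_{L^p(\Rp)}^p$ is untouched, $\widetilde U_\tau\in M_\omega$ amounts to $\|\widetilde v_\tau\|_{L^r(\Rd)}^r=\|v\|_{L^r(\Rd)}^r$. At $\tau=0$ one has $\|\widetilde v_0\|_{L^r(\Rd)}^r=q^r\|\G_\omega\|_{L^r(\Rd)}^r<\|v\|_{L^r(\Rd)}^r$, because $\phi_\omega>0$ forces $v>q\G_\omega$ pointwise; at $\tau=1$, since $\G_\omega$ is radially symmetric and decreasing, the rearrangement inequality $\|\psi+q\G_\omega\|_{L^r(\Rd)}\ge\|\phi_\omega+q\G_\omega\|_{L^r(\Rd)}$ (valid for $r\ge1$, e.g. via the layer-cake formula combined with the Hardy--Littlewood inequality applied to $x\mapsto(\,\cdot\,+q\G_\omega(x))^{r-1}$) gives $\|\widetilde v_1\|_{L^r(\Rd)}^r\ge\|v\|_{L^r(\Rd)}^r$. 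By continuity of $\tau\mapsto\|\widetilde v_\tau\|_{L^r(\Rd)}^r$ (dominated convergence, as in the proof of Lemma~\ref{phiom-pos}) there is $\tau^\star\in(0,1]$ with $\widetilde U_{\tau^\star}\in M_\omega$. Then \eqref{eq-levA} yields $A_\omega(\widetilde U_{\tau^\star})\ge d(\omega)=A_\omega(U)$, so the bracket in the last display vanishes at $\tau=\tau^\star$; since $\phi_\omega\not\equiv0$ and $\omega>0$, the first summand is strictly positive whenever $\tau^\star<1$, forcing $\tau^\star=1$ and hence $\|\na\psi\|_{L^2(\Rd)}=\|\na\phi_\omega\|_{L^2(\Rd)}$.

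It then remains to upgrade this equality into radial symmetry of $\phi_\omega$ about the origin. I would invoke the Brothers--Ziemer equality criterion for the P\'olya--Szeg\H{o} inequality, whose non-degeneracy hypothesis $|\{\na\psi=0\}\cap\{0<\psi<\max\psi\}|=0$ I would check as follows: with $\tau^\star=1$, by \eqref{eq-thirdequiv} the function $\widetilde U_1=(u,\psi+q\G_\omega)\in M_\omega$ realises $A_\omega=d(\omega)$, hence it is an action minimiser, so by Remark~\ref{rem-bs} its regular part $\psi$ solves $-\lap\psi+\omega\psi=(\psi+q\G_\omega)^{r-1}$; were $\psi$ constant on an annulus $\{a<|\x|<b\}$, then $\lap\psi=0$ there and the equation would force $\G_\omega$ to be constant on that annulus, contradicting that $\G_\omega(\x)=\tfrac1{2\pi}K_0(\sqrt\omega|\x|)$ is strictly decreasing in $|\x|$. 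Thus $\phi_\omega(\x)=\psi(\x-\x_0)$ for some $\x_0\in\Rd$; feeding this back into $-\lap\phi_\omega+\omega\phi_\omega=|v|^{r-2}v$ and solving for $q\G_\omega$ shows that $\G_\omega$ would be radial about $\x_0$, and being radial about $\z$ and non-constant this forces $\x_0=\z$. Therefore $\phi_\omega$ is radially symmetric and decreasing about the origin, and so is $v=\phi_\omega+q\G_\omega$, being a combination with positive coefficients of two such functions. Finally, for $\lambda\ge\omega$ the identity \eqref{change} gives $\phi_\lambda=\phi_\omega+q(\G_\omega-\G_\lambda)$, and $\G_\omega-\G_\lambda$ is nonnegative, radially symmetric and decreasing by the standard monotonicity properties of $K_0$ and $K_1$, so $\phi_\lambda$ inherits the same properties.

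I expect the real difficulties to be two. First, because $\G_\omega\notin H^1(\Rd)$ one cannot rearrange $v$ itself but only its regular part $\phi_\omega$; this destroys the constraint on $\|v\|_{L^r(\Rd)}$, which is why the scaling parameter $\tau$ and the rearrangement inequality for $\|\psi+q\G_\omega\|_{L^r(\Rd)}$ are needed, and why one must take care to land the competitor exactly on $M_\omega$ while keeping $A_\omega\le d(\omega)$ — here the superquadratic nature of $A_\omega$ noted in Remark~\ref{rem-problemi} is what makes the bookkeeping delicate. Second, the rigidity step requires both the verification of the Brothers--Ziemer non-degeneracy condition and the identification of the centre of symmetry with the origin, and both are carried out through the Euler--Lagrange equation together with the strict radial monotonicity of $\G_\omega$.
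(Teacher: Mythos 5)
Your scheme up to the rigidity step is essentially the paper's: fix $u$ and the charge $q$, work in the $\la=\omega$ decomposition, replace the regular part by (a multiple of) its rearrangement, and exploit the equivalent minimization of $A_\omega$ on $M_\omega$ (Lemma \ref{lem:thirdform}) together with a scaling parameter $\tau$ that restores the constraint; your formula for $A_\omega(\widetilde U_\tau)$, the intermediate-value choice of $\tau^\star$, and the conclusion $\tau^\star=1$ are all correct. The divergence, and the genuine soft spot, is the rigidity argument. To invoke Brothers--Ziemer you must verify $|\{\na\psi=0\}\cap\{0<\psi<\max\psi\}|=0$, and you dispose of the degenerate case by claiming that otherwise $\psi$ would be constant on an annulus. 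That implication does not hold: for a radial nonincreasing $H^2$ function the critical set inside $\{0<\psi<\max\psi\}$ can have positive measure without $\psi$ being constant on any annulus (the radial profile may be strictly decreasing with derivative vanishing on a fat Cantor set of radii), so your verification as written is incomplete. It can be repaired --- on $\{\na\psi=0\}$ one has $D^2\psi=0$ a.e., so the Euler--Lagrange equation gives $\omega\psi=(\psi+q\G_\omega)^{r-1}$ a.e.\ on a set of radii of positive measure, and differentiating this identity a.e.\ on that set (where the radial derivative of $\psi$ vanishes) forces $\G_\omega'=0$ there, contradicting $K_1>0$ --- but this extra argument is needed and is missing.

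More to the point, the detour through the P\'olya--Szeg\H{o} equality case is avoidable, and this is exactly what the paper does. At $\tau^\star=1$ you have already established $\|\psi+q\G_\omega\|_{L^r(\Rd)}=\|\phi_\omega+q\G_\omega\|_{L^r(\Rd)}$, and the rearrangement inequality for the sum with the strictly radially decreasing $\G_\omega$ is strict unless $\phi_\omega\equiv\phi_\omega^\star$: this equality characterization (the version of Eq.\ (33) of \cite{ABCT-22} invoked in the paper's proof) yields $\phi_\omega\equiv\phi_\omega^\star$ at once, with no Brothers--Ziemer hypothesis to check and no centre-of-symmetry identification, since the rearrangement is centred at the origin by construction. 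The paper phrases this as a contradiction: if $\phi_\omega\not\equiv\phi_\omega^\star$, then $\|\phi_\omega^\star+q\G_\omega\|_{L^r(\Rd)}^r>\|v\|_{L^r(\Rd)}^r$ strictly, so some $\tau\in(0,1)$ brings the competitor back onto $M_\omega$ while strictly decreasing $Q_\omega$, hence $A_\omega$, contradicting Lemma \ref{lem:thirdform}. Your remaining steps (recognizing $\widetilde U_1$ as an action minimizer via \eqref{eq-thirdequiv}, the two-centre argument giving $\x_0=\z$, and the transfer to $\phi_\la$ for $\la\geq\omega$ through the monotonicity of $t\mapsto tK_1(t)$) are sound, but the first two are only needed because of the route you chose.
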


\begin{proof}
By \cite[Remark 2.1]{ABCT-22} it is sufficient to prove that $\phi_{\omega}\equiv\phi_{\omega}^\star$, with $\phi_{\omega}^\star$ the symmetric decreasing rearrangement of $\phi_{\omega}$ (see e.g. \cite[Chapter 3]{LL-01}). Then, let $U=(u,v)$ be an action minimizer at frequency $\omega$, and consider the decomposition $v=\phi_{\omega}+q\G_{\omega}$. By Lemma \ref{lem:thirdform}, $U$ is also a minimizer of $A_{\omega}$ on $M_\omega$.

Assume by contradiction $\phi_{\omega}(\x)\neq \phi_{\omega}^\star(\x)$ for all $\x\in\Omega$, with $|\Omega|>0$. Define the function $\widetilde{U}:=(u,\widetilde{v})$, with $\widetilde{v}:=\phi_{\omega}^\star+q\G_{\omega}$. By \cite[Eqs. (31), (33) and (39)]{ABCT-22}, with the proviso that \cite[Eq. (33)]{ABCT-22} is an equality if and only if $g\equiv g^\star$, there results that $Q_{\omega}(\widetilde{U})\leq Q_{\omega}(U)$ and $\|\widetilde{v}\|_{L^{r}(\Rd)}^{r}>\|v\|_{L^{r}(\Rd)}^{r}$, so that
\begin{equation*}
A_{\omega}(\widetilde{U})\leq A_{\omega}(U)\qquad\text{and}\qquad\widetilde{S}(\widetilde{U})>\widetilde{S}(U).
\end{equation*}
Moreover, one can check that there exists $\tau\in (0,1)$ such that $\|\tau\phi_{\omega}^\star+q\G_{\omega}\|_{L^{r}(\Rd)}^{r}=\|\phi_{\omega}+q\G_{\omega}\|_{L^{r}(\Rd)}^{r}$. Besides, letting $U_\tau=(u,\tau\phi_{\omega}^\star+q\G_{\omega})$, there results that
\begin{equation*}
Q_{\omega}(U_\tau)<Q_{\omega}(\widetilde{U})
\end{equation*}
and thus
\begin{equation*}
A_{\omega}(U_\tau)<A_{\omega}(U)\qquad\text{and}\qquad\widetilde{S}(U_\tau)=\widetilde{S}(U),
\end{equation*}
which contradicts the fact that $U$ is a minimizer of $A_\omega$ on $M_\omega$ and concludes the proof.
\end{proof}

\begin{remark}
\label{rem-problemi2}
 The proof of Lemma \ref{v-sym} and of Lemma \ref{phiom-pos} require major modifications in the proof of the analogous results given in \cite{ABCT-22}. The reason is pointed out in Remark \ref{rem-problemi}. Indeed, the presence of a negative, non-quadratic term in $A_\omega$, forces variations that involve the sole restriction to the plane $v$. However, such an operation does not allow any control on the sign of $Q_\omega$ and, thus, any proper renormalization. Hence, we decided to act only on the regular part of $v$. This gives rise to  further technical issues (mainly in Proposition \ref{phiom-pos}), which can be  managed as showed before.
\end{remark}

Then, we can conclude the section with the proof of Theorem \ref{thm:4}, that follows combining previous results in this section.

\begin{proof}[Proof of Theorem \ref{thm:4}]
Let $U=(u,v)$ be a ground state for $E$ at mass $\mu$. In particular, fix $\la=1$, so that $v=\phi+q K_0/2\pi$, where $\phi$ stays for $\phi_1$. Then, by Proposition \ref{prop:uv-neq-0} $u\not\equiv 0$, $\phi\not\equiv 0$ and $q\neq 0$. Moreover, up to a multiplication by a constant phase, $q>0$ and $u(x)>0$ for every $x\geq 0$.
By Lemma \ref{lem:EL-eq}, $u$ solves \eqref{eq:half-line}, hence it is a proper translation of a one-dimensional soliton. On the other hand, \eqref{bc} follows by \eqref{first-cond} and \eqref{sec-cond} choosing $\la=1$. The properties of $v$ follows by combining Proposition \ref{prop:uv-neq-0} and Lemmas \ref{phiom-pos}, \ref{v-sym} and \ref{gs->min}.

 It is, then, left to prove that
\begin{equation}
\label{last-ineq}
E(U)<-\theta_p\mu^{\frac{p+2}{6-p}}.    
\end{equation}
Instead of proving \eqref{last-ineq}, we prove the stronger inequality
\[
E(U)<\min\{\Eps_\alpha(\mu,\Rp), \Eps_\rho(\mu,\Rd)\},
\]
since the right-hand side is not larger than $-\theta_p\mu^{\frac{p+2}{6-p}}$. Assume, by contradiction, that $E(U)=\Eps(\mu)=\min\{\Eps_\alpha(\mu,\Rp), \Eps_\rho(\mu,\Rd)\}$. Then, arguing as in the proof of Theorem \ref{thm:2}, we get  
\begin{equation*}
    E(U)=\inf_{U\in\D_\mu}\left\{E_\alpha(u,\Rp)+E_\rho(v,\Rd)\right\},
\end{equation*}
so that $U$ is also a ground state for $E_{\alpha}(\cdot,\Rp)+E_\rho(\cdot,\Rd)$ at mass $\mu$. However, by Theorem \ref{thm:3}, this entails that either $u\equiv0$ or $v\equiv0$, which is a contradiction and concludes the proof.
\end{proof}


\appendix
\section{The Laplacian on the hybrid plane}
\label{sec:operatorES}

Here we show that the Hamiltonian operators $H$ introduced by Exner and $\check{\text{S}}$eba in \cite{ES-87} to realize the Laplacian on the hybrid is associated to  the family of quadratic forms $Q$ defined in \eqref{quadratic}. In the notation $H$ and $Q$ we omit the dependence on the parameter $\alpha, \rho, \beta$, but it is clear that the symbol $H$ represents rather a family of operators and $Q$ a family of quadratic forms. The definition of $H$ presented below is slightly different from that of \cite{ES-87}, but it is easily seen that they are equivalent.

The operator $H$ is obtained as a self-adjoint operator that acts as the Laplacian on $\Rp$ and $\Rd$ outside the origin. Thus, $H$ is a self-adjoint extension of
\begin{equation}
\label{eq-h0}
 H_0:=H_{\Rp}\oplus H_{\Rd}
\end{equation}
with
\[
\begin{array}{ll}
\displaystyle H_{\Rp}: C^{\infty}_{c}(\Rp)
\to L^{2}(\Rp), & \quad H_{\Rp}u=-u''\\[.4cm]
\displaystyle H_{\Rd}: C^{\infty}_{c}(\Rd\setminus\{\z\})
\to L^{2}(\Rd), & \quad H_{\Rd}v=-\lap v.
\end{array}
\]
In \cite{ES-87}, the family of all extensions of $H_0$ is split up in five classes. Here we have considered class I only, which is a three-parameter family of extensions, since the others can be seen as degenerate cases (see Remark \ref{rem-degenerate}). 

Let $\alpha,\,\rho\in\R, \, \beta \in\C$. We define $H:D(H)\subset L^2(\I)\to L^2(\I)$ as the operator with domain
\begin{multline}
\label{dom-HSi}
D(H):=
\bigg\{U=(u,v)\in L^{2}(\I)\,:\,u\in H^{2}(\Rp), 
\\[.2cm]
v = \phi + q \frac{K_0}{2 \pi}, \, \phi \in H^{2}(\Rd), \, q \in \C \, \:\text{ and }\: \begin{pmatrix}
u'(0) \\
\phi(\z) 
\end{pmatrix}
= \begin{pmatrix}
\alpha & \beta \\
\overline{\beta} & \rho +\frac{\gamma-\log \, 2}{2\pi}
\end{pmatrix} 
\begin{pmatrix}
u(0) \\
q
\end{pmatrix}\bigg\},
\end{multline}
and action
\begin{equation}
\label{act-HSi}
H U=\left(-u'', - \Delta \phi - q \frac{K_0}{2 \pi}\right),\quad U=(u,v)\in D(H),
\end{equation}
We recall that the plane component of every function $U=(u,v)\in D(H)$ can be represented for every $\la>0$ as $v=\phi_\la+q\G_\la$ and for the action of $H$ one finds
$$ H U \ = \ (-u'',
-\lap \phi_\la-\la q \G_\la).$$ As a consequence, \eqref{dom-HSi} can be represented equivalently in terms of the dummy parameter $\lambda$. In this case boundary conditions in \eqref{dom-HSi} read
\begin{equation}
\label{BC}
\begin{cases}
u'(0)=\alpha u(0)+\beta q\\[.2cm]
\phi_\la(\z)={\overline{\beta}} u(0)+\left(\rho+\frac{\gamma-\log(2)+\log(\sqrt{\la})}{2\pi}\right)q.
\end{cases}
\end{equation}

\begin{remark}
Note that the boundary conditions in formula \eqref{BC} differ from those in formula \eqref{bc} of Theorem \ref{thm:4}. More precisely, while  formula \eqref{BC} involves both $\beta$ and its conjugate $\overline{\beta}$, in formula \eqref{bc} both terms are replaced by $-\beta$. This apparent inconsistency is due to the fact that, whereas the self-adjoint extensions above are derived for an arbitrary complex coupling parameter $\beta$, in the main body of the paper we have chosen to take $\beta\geq 0$ and to introduce a minus sign in front of it in order to improve readability. However, this is not restrictive in the study of the normalized the ground states of the energy. In order to see this, let us write the expression of the NLS energy on $\I$ consistent with \eqref{BC} for an arbitrary complex $\beta_0$:
\[
\widetilde E (U)=E_\alpha (u, \R^+) + E_\rho (v, \R^2) + \Re \left(\beta_0 q \overline{u(0)}\right).
\]
Since
\[
\beta_0\,q \overline{u(0)}=|\beta_0|\, q\,\overline{e^{-i\mathrm{Arg}\left(\beta_0\right)}\,u(0)}=-|\beta_0|\, q\,\overline{e^{-i\pi}\,e^{-i\mathrm{Arg}\left(\beta_0\right)}\,u(0)},
\]
for every $U=(u,v)\in\mathcal D_\mu$, if we define $U_{\beta_0}=(u_{\beta_0},v)$ with $u_{\beta_0}:=e^{-i\left(\pi+\mathrm{Arg}\left(\beta_0\right)\right)}u$, then $U_{\beta_0}\in\mathcal D_\mu$ and $\widetilde E(U)=E(U_{\beta_0})$, where $E$ is the energy defined by \eqref{e} with $\beta=|\beta_0|$.
\end{remark}

\begin{remark}
 \label{rem-degenerate}
 The other classes of self-adjoint extensions of $H_0$ established by \cite{ES-87} can be considered as degenerate since they are obtained by letting the parameters $\alpha,\rho,\beta$ go to infinity. More precisely, classes III, IV, and V correspond to the cases in which the half-line and the plane are decoupled. More precisely:
 \begin{itemize}[label=$\ast$]
  \item class III is obtained by letting $\alpha\to\infty$ and keeping $\rho$ and $\beta$ bounded;
  \item class IV is obtained letting $\rho\to\infty$ and keeping $\alpha$ and $\beta$ bounded;
  \item class V is obtained either letting $\alpha,\,\rho\to\infty$ and keeping $\beta$ bounded, or letting $|\beta|\to+\infty$ and keeping $\alpha$ and $\rho$ bounded.
 \end{itemize}

Class II, on the contrary, corresponds to a completely coupled case, in which one cannot decouple the two objects for any value of the parameters. It can be formally obtained, for instance, letting $|\alpha|,|\rho|,|\beta|\to+\infty$ in such a way that $\alpha\sim\rho\sim\beta$ and $|\beta|^2-\alpha\rho\sim|\beta|$ (e.g. $\beta>0$ and $\alpha=\rho=\beta+\sqrt{\beta}$).
\end{remark}

Let us show  that $Q:\D\to\R$, defined by \eqref{quadratic}, is the quadratic form associated with $H$. Preliminarily, one can directly check that $Q$ is symmetric and 
\begin{equation*}
Q(U)=\scal{U}{H U}_{L^{2}(\I)}\qquad \forall U\in D(H).
\end{equation*}
Hence,  to conclude it is sufficient to prove that $Q:\D\to L^{2}(\I)$ is associated with a unique self-adjoint operator. By \cite[Theorem VIII.15]{RS-80}, this is guaranteed if $Q(U)\geq -M\|U\|_{L^{2}(\I)}^{2}$, for some $M\in\R$, and $Q$ is complete in $\D$ with respect to the norm
 \begin{equation*}
\|U\|_{\D,M}:=\left(Q(U)+(1+M)\|U\|_{L^{2}(\I)}^{2}\right)^{\f{1}{2}}.
\end{equation*}
(see e.g. \cite[Chapter VIII.6]{RS-80}). To this aim, recalling \eqref{quadratic1}, \eqref{quadratic2}, \eqref {eq-bottom-2d} and the fact that 
\begin{equation}
\label{eq-bottom-1d}
-\ell_\alpha:=\inf_{\substack{u\in H^{1}(\Rp)\\ u\neq0}}\f{Q_{\alpha}(u,\Rp)}{\|u\|_{L^{2}(\Rp)}^{2}}=
\begin{cases}
\,0,\quad &\text{if}\quad\alpha \geq 0,\\
-\alpha^{2},\quad &\text{if}\quad\alpha<0.
\end{cases}
\end{equation}
The infimum is attained if and only if $\alpha<0$ by $Ce^{\alpha x}$, $C\in\C$ (see \cite[Section 6.2.2]{GTV-12}), there results
\begin{align*}
 Q(U) & \geq Q_{\alpha-|\beta|}(u,\Rp)+Q_{\rho-|\beta|}(v,\Rd)\\[.2cm]
             & \geq -\ell_{\alpha-|\beta|}\|u\|_{L^2(\Rp)}^2-\omega_{\rho-|\beta|}\|v\|_{L^2(\Rd)}^2\\[.2cm]
             & \geq -\max\big\{\ell_{\alpha-|\beta|},\omega_{\rho-|\beta|}\big\}\|U\|_{L^2(\I)}^2,\qquad\forall U=(u,v)\in\D.
\end{align*}
On the other hand, using again \cite[Chapter VIII.]{RS-80}, $Q$ is complete in $\D$ if and only if, for every sequence $(U_{n})_n\subset \D$ such that $U_{n}\to U$ in $L^{2}(\I)$ and $Q(U_{n}-U_{m})\to 0$, there results that $U\in \D$ and $Q(U_{n}-U)\to 0$. Then, fix $(U_{n})_n\subset \D$ with the two required properties. In particular, for any fixed $\la>0$, $U_{n}=(u_{n},v_{n})$, with $u_{n}\in H^{1}(\Rp)$ and $v_{n}-q_{n}\G_{\la}=:\phi_{n,\la}\in H^{1}(\Rd)$, for some $(q_{n})_n\subset \C$. Moreover, $u_{n}\to u$ in $L^{2}(\Rp)$, $v_{n}\to v$ in $L^{2}(\Rd)$ and $\|u_{n}-u_{m}\|_{L^{2}(\Rp)}$, $\|v_{n}-v_{m}\|_{L^{2}(\Rd)}$ and  $Q(U_{n}-U_{m})$ are uniformly bounded. By \eqref{gn1-inf} and choosing $\la=\omega_\rho\mathrm{e}^{4\pi}$, there results that
\begin{multline*}
\|u_{n}'-u_{m}'\|_{L^{2}(\Rp)}^{2}-C\|u_{n}'-u_{m}'\|_{L^{2}(\Rp)}\\
-C\la+|q_{n}-q_{m}|^{2}-C|q_{n}-q_{m}|\|u_{n}'-u_{m}'\|_{L^{2}(\Rp)}^{\f{1}{2}}\leq C,
\end{multline*}
from which, using a weighted Young's inequality, we deduce that both $\|u_{n}'-u_{m}'\|_{L^{2}(\Rp)}$ and $|q_{n}-q_{m}|$ are bounded. As a consequence,
\begin{equation*}
|u_n(0)-u_m(0)|^{2}\leq 2\|u_{n}'-u_{m}'\|_{L^{2}(\Rp)}\|u_{n}-u_{m}\|_{L^{2}(\Rp)}\to 0, 
\end{equation*}
and
\begin{equation*}
\left|\Re\left(\beta(q_{n}-q_{m})(\overline{u_{n}(0)}-\overline{u_{m}(0)})\right)\right|\leq C|q_{n}-q_{m}|\|u_{n}'-u_{m}'\|_{L^{2}(\Rp)}^{\f{1}{2}}\|u_{n}-u_{m}\|_{L^{2}(\Rp)}^{\f{1}{2}}\to 0.
\end{equation*}
Thus 
\begin{equation*}
\begin{split}
Q(U_{n}-U_{m})=&\|u_{n}'-u_{m}'\|_{L^{2}(\Rp)}^{2}+\|\na\phi_{n,\la}-\na\phi_{m,\la}\|_{L^2(\Rd)}^{2}\\
&+\la\|\phi_{n,\la}-\phi_{m,\la}\|_{L^2(\Rd)}^{2}+|q_{n}-q_{m}|^{2}+o(1)\to 0,
\end{split}
\end{equation*}
which entails that $(u_{n})_{n}$ is a Cauchy sequence in $H^{1}(\Rp)$, $(\phi_{n,\la})_{n}$ is a Cauchy sequence in $H^{1}(\Rd)$ and $(q_{n})_{n}$ is a Cauchy sequence in $\C$. Hence, there results that $U\in \D$ and that $Q(U_{n}-U)\to 0$.


\section{Eigenvalues of the Laplacian on the hybrid plane}
\label{sec:Elin}
Here we give the discrete spectrum of the operator $H$ introduced in Appendix \ref{sec:operatorES}, in view of the importance of its least eigenvalue, that coincides with the quantity $-E_{\rm{lin}}$, defined in \eqref{Elin}. 

The eigenvalues of $H$ are the numbers $\nu\in\R$ for which there exists $U=(u,v)\in D(H)\setminus\{(0,0)\}$ such that
\begin{equation}
\label{eq-eigeneq}
 (H-\nu)U=(0,0), \qquad\text{i.e.}\qquad
 \left\{
 \begin{array}{ll}
  -u''-\nu u=0, & \text{on }\Rp,\\[.2cm]
  -\lap\phi_\la-\nu\phi_\la-q(\nu+\la)\G_\la=0, & \text{on }\Rd.
 \end{array}
 \right.
\end{equation}
The case $\beta=0$ is immediate as the problem decouples, and the discrete spectrum consists of $\{-\omega_\rho\}$, when $\alpha\geq0$, and of $\{-\omega_\rho,-\alpha^2\}$, when $\alpha<0$, where the eigenspace of $-\omega_\rho$ is spanned by $(0,\G_{\omega_\rho})$ and, when $\alpha<0$, the eigenspace of $-\alpha^2$ is spanned by $\mathrm{e}^{\alpha x}$. On the other hand, if $\beta>0$, then one finds that the eigenvalues are the negative solutions in $\nu$ of
\begin{equation}
\label{eq-eigenrel}
 \left(\alpha+\sqrt{-\nu}\right)=\left(\rho+\frac{\gamma-\log(2)+\log(\sqrt{-\nu})}{2\pi}\right)^{-1}\beta^2.
\end{equation}
Such solutions are not explicit, but an easy qualitative study of the functions on the two sides of \eqref{eq-eigenrel}, yields that
\begin{itemize}[label=$\ast$]
 \item for every $\alpha,\,\rho\in\R$, $\beta>0$, there exists a solution, denoted by $\ell^1$, smaller than $\min\{-\ell_\alpha,-\omega_\rho\}$,
 \item for every $\rho\in\R$, $\alpha<0$, $\beta>0$, there exists another solution, denoted by $\ell^2$, belonging to $\left(\max\{-\ell_\alpha,-\omega_\rho\},0\right)$.
\end{itemize}
Hence, the discrete spectrum consists of $\{\ell^1\}$, when $\alpha\geq0$, and of $\{\ell^1,\ell^2\}$, when $\alpha<0$. The eigenspaces of $\ell^j$ are spanned by $(\mathrm{e}^{-\sqrt{-\ell^j} x},\f{\alpha+\sqrt{-\ell^j}}{\beta}\G_{-\ell^j}(\x))$. 

Summing up, the discrete spectrum of $H$ reads
\[
 \sigma_{\mathrm{d}}(H)=\left\{
 \begin{array}{ll}
  \{-\omega_\rho\}, & \text{if }(\alpha,\beta)\in[0,+\infty)\times\{0\}\\[.2cm]
  \{-\omega_\rho,-\alpha^2\}, & \text{if }(\alpha,\beta)\in\R^-\times\{0\}\\[.2cm]
  \{\ell^1\}, & \text{if }(\alpha,\beta)\in[0,+\infty)\times\Rp\\[.2cm]
  \{\ell^1,\ell^2\}, & \text{if }(\alpha,\beta)\in\R^-\times\Rp.
 \end{array}
 \right.
\]
and $-E_{\rm{lin}}$, defined by \eqref{Elin}, is actually the least eigenvalue of $H$, i.e. $-E_{\rm{lin}}=\min\sigma_{\mathrm{d}}(H)$.


\section*{Statements and Declarations}


\subsection*{Fundings and acknowledgments} F.B. and L.T. have been partially supported by the INdAM GNAMPA project 2023 ``Modelli nonlineari in presenza di interazioni puntuali'' (CUP E53C22001930001).

R.A., R.C. and L.T. acknowledge that this study was carried out within the project E53D23005450006 ``Nonlinear dispersive equations in presence of singularities'' -- funded by European Union -- Next Generation EU within the PRIN 2022 program (D.D. 104 - 02/02/2022 Ministero dell'Universit\`a e della Ricerca). This manuscript reflects only the author's views and opinions and the Ministry cannot be considered responsible for them.


\end{document}